\newtheorem{theorem}{Theorem}[section]
\newtheorem{corollary}[theorem]{Corollary}
\newtheorem{lemma}[theorem]{Lemma}
\newtheorem{proposition}[theorem]{Proposition}
\theoremstyle{definition}
\newtheorem{definition}[theorem]{Definition}
\theoremstyle{remark}
\newtheorem{remark}[theorem]{Remark}
\numberwithin{equation}{section}
\renewenvironment {proof} {\begin{trivlist} \item[\hspace{\labelsep}%
\sc Proof.]}{$\Box$ \end{trivlist}}
      \def\dC{{\mathbb C}}
   \def\dN{{\mathbb N}}   
      \def\dR{{\mathbb R}}
   \def\dZ{{\mathbb Z}}
   \def\cH{{\mathcal H}}   
\def\cM{{\mathcal M}}   \def\cN{{\mathcal N}}   
      \def\cU{{\mathcal U}}
   \def\cZ{{\mathcal Z}}
\font\gothic=eufm10
\def\gh{\mbox{\gothic\char'150}}
\newcommand {\wt}{\widetilde}
\newcommand {\wh}{\widehat}
\begin{document}

\title[Full indefinite Stieltjes moment problem]
{Full indefinite Stieltjes moment problem and Pad\'{e} approximants }

\author[Volodymyr Derkach]{Volodymyr Derkach}
\address{Department of Mathematics\\ Vasyl$'$ Stus Donetsk National University\\ 600-Richchya Str 21, Vinnytsia, 21021,  Ukraine}
\email{derkach.v@gmail.com}

\author[Ivan Kovalyov]{Ivan Kovalyov}
\address{Department of Mathematics \\Dragomanov National Pedagogical University \\Kiev, Pirogova 9, 01601, Ukraine}
\email{i.m.kovalyov@gmail.com}

\thanks{The present research was supported
by Ministry of Education and Science of Ukraine (projects \# 0118U003138, 0118U002060), and a grant of the Volkswagen Foundation. V. D. gratefully
acknowledges financial support by the German Research Foundation (DFG), grant TR 903/22-1}
\subjclass{Primary 30E05; Secondary  15B57,  30B70, 46C20}


 \keywords{Indefinite Stieltjes moment problem, Generalized Stieltjes function,   Generalized Stieltjes polynomials, Schur algorithm,  Resolvent matrix}

 \begin{abstract}
 Full indefinite Stieltjes moment problem is studied via the step-by-step Schur algorithm.
Naturally associated with indefinite Stieltjes moment problem are generalized Stieltjes  continued fraction and a system of difference equations, which, in turn, lead to factorization of resolvent matrices of indefinite Stieltjes moment problem.
A criterion for such a problem to be indeterminate in terms of continued fraction is found and a complete description of its  solutions is given in the indeterminate case.
Explicit formulae for diagonal and sub-diagonal Pad\'{e} approximants for formal power series corresponding to indefinite Stieltjes moment problem and   convergence results for Pad\'{e} approximants are presented.
\end{abstract}

\maketitle

\section{Introduction}
The classical Stieltjes moment problem 
consists in the following:
given a sequence of real numbers $s_{j}$
$(j\in\mathbb{Z}_{+}:=\mathbb{N}\cup\{0\})$ find a  positive
measure $\sigma$ with a support on $\mathbb{R}_{+}$, such that
\begin{equation}\label{int1}
    \int_{\mathbb{R}_{+}}t^jd\sigma(t)=s_{j},\quad j\in\mathbb{Z}_{+}.
\end{equation}

It follows easily from~\eqref{int1} that the inequalities
\begin{equation}\label{int5}
    S_{n}:=\left(s_{i+j}\right)_{i,j=0}^{n-1}\geq0\quad\mbox{ and }\quad S_{n}^{+}:=\left(s_{i+j+1}\right)_{i,j=0}^{n-1}\geq0, \quad n\in\mathbb{Z}_{+},
\end{equation}
are necessary for solvability of the moment problem~\eqref{int1}. Moreover, the inequalities~\eqref{int5} 
are also sufficient for solvability of the moment problem~\eqref{int1}, see~\cite[Appendix, Section A1]{Akh}.
Let
\begin{equation}\label{int5xy}
   D_n:=\mbox{det } S_{n}\quad\mbox{ and }\quad  \quad D_n^+:=\mbox{det } S_{n}^{+}, \quad n\in\mathbb{Z}_{+}.
\end{equation}

{
For every solution $\sigma$ of the Stieltjes moment problem its Stieltjes transform
\begin{equation}\label{int3}
    f(z)=\int_{\mathbb{R}_{+}}\frac{d\sigma(t)}{t-z}\qquad z\in \mathbb{C}\backslash \mathbb{R}_{+}
\end{equation}
belongs to the class
 $\mathbf{N}$ of functions holomorphic on $\mathbb{C}\backslash \mathbb{R}$ with nonnegative imaginary part in  $\mathbb{C}_{+}$ and such that $f(\overline{z})=\overline{f(z)}$ for $z\in \mathbb{C}_{+}$.
Moreover, $f$ belongs to the {\it Stieltjes class} $\mathbf{S}$ of
functions $f\in \mathbf{N}$, which admit holomorphic and nonnegative
continuation to $\mathbb{R}_{-}$. By M.G. Krein's
criterion~\cite{KacK68}
\[
    f\in \mathbf{S}\Longleftrightarrow f\in \mathbf{N} \quad\mbox{and}\quad zf\in \mathbf{N}.
\]

Notice, that by the Hamburger--Nevanlinna theorem~\cite[Theorem 3.2.1]{Akh}
the  Stieltjes moment problem can be reformulated as the following interpolation problem at $\infty$ for the Stieltjes transform $f(z)=\int_{\mathbb{R}_{+}}\frac{d\sigma(t)}{t-z}$ of the measure $\sigma$
\begin{equation}\label{int4}
    f(z)=-\frac{s_{0}}{z}-\frac{s_{1}}{z^2}-\cdots-\frac{s_{2n}}{z^{2n+1}}
    +o\left(\frac{1}{z^{2n+1}}\right),\quad\quad z\widehat{\rightarrow}\infty.
\end{equation}
The notation $z\widehat{\rightarrow}\infty$ means that $z\rightarrow\infty$  nontangentially, that is inside the
sector $\varepsilon<\arg z<\pi-\varepsilon$ for some
$\varepsilon>0$.

In the pioneering paper~\cite{St94} by T. Stieltjes it was shown that if $D_n,$ $D_n^+$ are positive for all $n\in\dZ_+$ and
\begin{equation}\label{int5x}
   l_n:=\frac{D_n^2}{D_n^+D_{n-1}^+},\quad  m_n:=\frac{(D_{n-1}^+)^2}{D_nD_{n-1}},\quad n\in\mathbb{N}.
\end{equation}
then the Laurent series $-\sum_{j=0}^\infty\frac{s_j}{z^{j+1}}$ can be uniquely developed in   the continued fraction
\begin{equation}\label{s4.2Cx}
    \displaystyle\frac{\displaystyle1}{\displaystyle -z m_{1}+\displaystyle\frac{1}{\displaystyle
    l_{1}+\dots\frac{1}{\displaystyle -z m_{n} +\frac{1}{\displaystyle
    l_{n}+\dots}}}}. 
\end{equation}
The moment problem~\eqref{int1} is called {\it determinate}, if it has unique solution, and {\it indeterminate} otherwise.
A solution $\sigma$ of~\eqref{int1} is called canonical, if the set of polynomials is dense in $L^2(\sigma)$.
As was shown in~\cite{St94} the moment problem~\eqref{int1} is indeterminate, if and only if
\begin{equation}\label{eq:Undet}
  M:=\sum_{i=1}^\infty m_i<\infty\quad\mbox{and}\quad L:=\sum_{i=1}^\infty l_i<\infty.
\end{equation}


As was shown in \cite[Appendix, $13.2^\circ$]{KacK68}, in the indeterminate case there exists an entire matrix valued function  $W(z)=(w_{i,j}(z))_{i,j=1}^2$ such that the set of all solutions of the problem~\eqref{int1} can be parametrized by the formula
\begin{equation}\label{eq:StMP}
  \int_{\mathbb{R}_{+}}\frac{d\sigma(t)}{t-z}=T_{W(z)}[\tau(z)]:=
  \frac{w_{11}(z)\tau(z)+w_{12}(z)}{w_{21}(z)\tau(z)+w_{22}(z)},
\end{equation}
where $\tau$ ranges over the Stieltjes class $\mathbf{S}$.
A subset of its canonical solutions was described already in~\cite{St94}.
}

Now let us remind the indefinite versions of the classes  $\mathbf{N}$ and  $\mathbf{S}$, see~\cite{KL77}.
\begin{definition}\label{def:Nk}({\cite{KL77}})
    A function $f$ meromorphic on $\mathbb{C}\backslash\mathbb{R}$ with the set of holomorphy ${\mathfrak h}_f$
    is said to be in the generalized Nevanlinna class $\mathbf{N}_{\kappa}$ $(\kappa\in\mathbb{N})$, if for every set $z_{j}\in \mathbb{C}_{+}\cap{\mathfrak h}_f$ ($z_{i}\neq\overline{z}_{j}$, $i,j=1,\ldots,n$) the form
\[
    \sum_{i,j=1}^{n}\frac{f(z_{i})-\overline{f(z_{j})}}{z_{i}-\overline{z}_{j}}
    \xi_{i}\overline{\xi}_{j} 
\]
has at most $\kappa$ and for some choice of $z_{j}$ ($j=1,\ldots,n$) exactly $\kappa$ negative squares.
 A functions $f\in \mathbf{N}_{\kappa}$ is said to belong to the generalized  Stieltjes class $\mathbf{N}_{\kappa}^{+}$, if $zf\in \mathbf{N}$.
\end{definition}
 Similarly, in \cite{D91,D97} the class $\mathbf{N}_{\kappa}^{k}$  ($\kappa, k\in \mathbb{N}$) was introduced as the set of functions $f\in \mathbf{N}_{\kappa}$, such that $zf(z)$ belongs to the class  $\mathbf{N}_{k}$, see also~\cite{DM87, DM97}, where the class $\mathbf{N}_{0}^{k}$ was studied.

In the present paper we consider the following problems. 

\noindent{\bf Full indefinite moment problem} $MP_{\kappa}(\textbf{s})$.
Given $\kappa\in\mathbb{Z}_{+}$,  and an infinite sequence $\textbf{s}=\left\{s_{j}\right\}_{j=0}^{ \infty}$ of real numbers, describe the set $\mathcal{M}_{\kappa}({\mathbf s})$  of functions $f\in \mathbf{N}_{\kappa}$, which satisfy~\eqref{3p.2.3x} for all $\ell\in\dN$.

\noindent{\bf Full indefinite moment problem} $MP_{\kappa}^{k}(\textbf{s})$.
Given $\kappa,k\in\mathbb{Z}_{+}$,  and an infinite sequence $\textbf{s}=\left\{s_{j}\right\}_{j=0}^{ \infty}$of real numbers, describe the set $\mathcal{M}_{\kappa}^{k}({\mathbf s}):=\mathcal{M}_{\kappa}({\mathbf s})\cap \mathbf{N}_{\kappa}^{k}$.

Indefinite moment problems $MP_{\kappa}(\textbf{s})$ and $MP_{\kappa}^{+}(\textbf{s})$  were studied in \cite{KL79}, \cite{KL81} by the methods of extension theory of Pontryagin space symmetric operators developed in \cite{KL77}, \cite{KL79}.  In particular, it
was shown in \cite{KL79} that the moment problem {$MP^+_{\kappa}(\textbf{s})$  is solvable
if and only if } the number $\nu_{-}(S_{n})$ of
negative eigenvalues of $S_{n}$ does not exceed $\kappa$
and $S_{n}^{+}>0$ for all $n\in \mathbb{N}$. Further applications of the operator approach to the moment problem $MP_{\kappa}^{k}(\textbf{s})$ were given in~\cite{D97}. A reproducing kernel approach to the moment problems $MP_{\kappa}(\textbf{s})$ was presented in~\cite{Dym89}.
A step-by-step algorithm of solving the moment problems $MP_{\kappa}(\textbf{s})$ was
elaborated in \cite{Der03}, \cite{DD04} and \cite{ADL04}.
Applications of the Schur algorithm to degenerate moment problem in the class $\mathbf{N}_{\kappa}$ were given in~\cite{DHS12}.

The step-by-step algorithm reduces the moment problems $MP_{\kappa}(\textbf{s})$ and $MP_{\kappa}^k(\textbf{s})$ to the following truncated  moment problems:

\noindent{\bf Truncated indefinite moment problem} $MP_{\kappa}(\textbf{s},  \ell)$.
Given $\ell,\kappa\in\mathbb{Z}_{+}$,  and a finite sequence $\textbf{s}=\left\{s_{j}\right\}_{j=0}^{ \ell}$ of real numbers, describe the set $\mathcal{M}_{\kappa}({\mathbf s},\ell)$  of functions $f\in \mathbf{N}_{\kappa}$, which satisfy the asymptotic expansion
\begin{equation}\label{3p.2.3x}
    f(z)=-\frac{s_{0}}{z}-\cdots-\frac{s_{\ell}}{z^{\ell+1}}+
o\left(\frac{1}{z^{\ell+1}}\right),\quad
(z=iy, \,\, y{\uparrow}\infty).
\end{equation}

\noindent{\bf Truncated indefinite moment problem} $MP_{\kappa}^{k}(\textbf{s},  \ell)$.
Given $\ell,\kappa,k\in\mathbb{Z}_{+}$,  and a sequence $\textbf{s}=\left\{s_{j}\right\}_{j=0}^{ \ell}$ of real numbers, describe the set $\mathcal{M}_{\kappa}^{k}({\mathbf s},\ell)$  of functions $f\in \mathbf{N}_{\kappa}^{k}$, which satisfy~\eqref{3p.2.3x}.
A truncated moment problem is called {\it even} or {\it odd} regarding to the oddness of the number $\ell+1$ of given moments.

Let  ${\mathcal{H}}$ be the set of all infinite real sequences
${\bf s}=\{s_j\}_{j=0}^{\infty}$ and let
 ${\mathcal{H}}_{\kappa}$ be the set of sequences
${\bf s}\in{\mathcal{H}}$, such that
\begin{equation}\label{eq:Hkl}
   \nu_{-}(S_n)=\kappa \quad\mbox{for all $n$ big enough}
\end{equation}
Denote by ${\mathcal{H}}_{\kappa}^k$ the set of real sequences
${\bf s}\in{\mathcal{H}}_{\kappa}$, such that
$\{s_{j+1}\}_{j=0}^{\infty}\in{\mathcal{H}}_{k}$, i.e.
\begin{equation}\label{eq:Hkkl}
   \nu_{-}(S_{n}^+)=k \quad\mbox{for all $n$ big enough}.
\end{equation}

A number
$n_{j}\in \mathbb{N}$ is said to be a {\it normal index} of the sequence
${\bf s}\in\cH$, if det$\,S_{n_{j}}\neq0$. The ordered set of normal
indices
\[
n_1<n_2<\dots<n_N
\]
 of the sequence ${\bf s}$ is denoted by $\mathcal{N}({\bf s})$.


As was shown in~\cite{DD04} for every ${\bf s}\in{\mathcal{H}}_{\kappa}$
there exists a sequence { of real numbers $b_{i}\in\dR\backslash\{0\}$, $i\in\mathbb{N}$ and real monic polynomials}
\begin{equation}\label{eq:pol a j2}
a_{i}(z)=z^{\ell_{i}}+a_{\ell_{i}-1}^{(i)}z^{\ell_{i}-1}+\ldots+a_{1}^{(i)}z+a_{0}^{(i)}
\end{equation}
of degree $\ell_{i}=n_{i+1}-n_{i}$, $i\in\mathbb{N}$, such that the convergents of the continued fraction
\begin{equation}\label{eq:Pfrac}
   \frac{ - b_{0}}{\displaystyle {a}_{0}(z)-\frac{b_{1}}{\displaystyle
    {a}_{1}(z)-{\dots-\frac{b_{n}}{a_n(z)-\dots}}}}
\end{equation}
for sufficiently large $n$ have the asymptotic expansion~\eqref{3p.2.3x} for every $\ell\in\dN$.
This fact was known already to L.~Kronecker~\cite{Kro81} and then it was reinvented in~\cite{Der03}.
The pairs $(a_i,b_i)$ are called atoms, see~\cite{Fuhr10} and the continued fraction~\eqref{eq:Pfrac} is called the $P$--fraction,~\cite{Mag62}.

Consider the three-term recurrence relation
\begin{equation}\label{6p.eq:3-termrecrel}
    b_{j}y_{{j-1}}(z)-a_{j}(z)y_{{j}}(z)+y_{{j+1}}(z)=0,
\end{equation}
associated with the sequence of atoms $\{a_i,b_i\}$,
$i\in\mathbb{N}$, and define polynomials $P_{{j}}(z)$ and
$Q_{{j}}(z)$ as solutions of the system
\eqref{6p.eq:3-termrecrel} subject to the initial conditions
\begin{equation}\label{system1.1}
    P_{-1}(z)\equiv0,\mbox{
    }P_{0}(z)\equiv1,\mbox{ }
    Q_{-1}(z)\equiv-1, \mbox{
    }Q_{0}(z)\equiv0.
\end{equation}
The polynomials $P_{{j}}$ and
$Q_{{j}}$ are called Lanzcos polynomials of the first and second kind.
Moreover,  the  $n$-th convergent of the continued fraction~(\ref{eq:Pfrac}) takes the form  (see~\cite[Section 8.3.7]{Fuhr10}).
\[
f^{[n]}(z)=-\frac{Q_{{n}}(z)}{P_{{n}}(z)},\quad n\in\dN.
\]
As was shown in~\cite{DD04} the set $\mathcal{M}_{\kappa}({\mathbf s},2n_j-2)$ can be described in terms of the Lanzcos polynomials of the first and second kind.

 A sequence ${\bf s}\in\cH_\kappa^k$ is called {\it regular} (see~\cite{DK15}), and is designated as $\textbf{s}\in \cH_\kappa^{k,reg}$, if
\begin{equation}\label{eq:int_p}
   D_{n_j}^+= \det S_{n_j}^+\ne 0 \qquad \mbox{for all}\quad  j\in \dN.
\end{equation}
In~\cite{DK16} it was shown that an even  indefinite Stieltjes moment problems $MP_{\kappa}^{k}(\textbf{s},  2n_N-1)$  for regular sequence ${\bf s}$ is solvable if and only if
\begin{equation}\label{eq:Gen_Solv+x}
    \kappa_N:=n_-(S_{n_N})\le\kappa\quad\mbox{and}\quad
    k_N^+:=n_-(S_{n_N}^+)\le k.
\end{equation}
For this problem one step of the Schur algorithm was split in~\cite{DK16} into two substeps and this leads to the expansion of $f\in\cM_{\kappa}^{k}(\textbf{s},  2n_N-2)$ into a generalized Stieltjes continued fraction
\begin{equation}\label{eq:con_fracx}
    f(z)=\frac{1}{-zm_{1}(z)+\frac{\displaystyle1}{\displaystyle l_{1}+\cdots+\frac{\displaystyle1}{\displaystyle-zm_{N}(z)
   +\frac{\displaystyle1}{\displaystyle l_N+ f_N(z)}
    }}},
\end{equation}
where $m_{j}$ are  polynomials, $l_{j}\in\mathbb{R}\backslash\{0\}$ and $f_N$ is a function from the generalized Stieltjes class ${\mathbf N}_{\kappa-\kappa_N}^{k-k_N}$, such that $f_N(z)=o(1)$ as $z\widehat{\rightarrow}\infty$.
If $f\in\cM_{\kappa}^{k}(\textbf{s})$ then $f_N$
has an induced asymptotic expansion
\begin{equation}\label{int4N}
    f_N(z)=-\frac{s^{(N)}_{0}}{z}-\frac{s^{(N)}_{1}}{z^2}-\cdots-\frac{s^{(N)}_{2n}}{z^{2n+1}}
    +o\left(\frac{1}{z^{2n+1}}\right),\quad\quad z\widehat{\rightarrow}\infty,
\end{equation}
i.e. $f_N$ is a solution of an induced moment problem ${MP}_{\kappa-\kappa_N}^{k-k_N}({\mathbf s^{(N)}})$ generated by the sequence $\textbf{s}^{(N)}=(s^{(N)}_{i})_{i=0}^\infty$. Then $f_N\in\cM_{\kappa}^{k}(\textbf{s}^{(N)})$.

Generalized Stieltjes continued fractions were studied in~\cite{DK15}.
Associated to the continued fraction
\eqref{eq:con_fracx} there is a system of difference equations (see~\cite[Section~1]{Wall})
\begin{equation}\label{s4.3Ix}
    \left\{
    \begin{array}{l}
        y_{2j-1}-y_{2j-3}=-zm_{j}(z)y_{2j-2}{,}\\
        y_{2j}-y_{2j-2}=l_{j}y_{2j-1},\\
    \end{array}\right.\quad j\in\dN.
\end{equation}

Define the generalized Stieltjes polynomials $P_{{j}}^+$ and
$Q_{{j}}^+$ of the first and second kind as solutions of the system
\eqref{s4.3Ix} subject to the initial conditions
\begin{equation}\label{system1.1Ix}
    P_{-1}^+(z)\equiv0,\mbox{
    }P_{0}^+(z)\equiv1,\mbox{ }
    Q_{-1}^+(z)\equiv 1, \mbox{
    }Q_{0}^+(z)\equiv0.
\end{equation}
The formula~\eqref{eq:con_fracx} for the set  of solutions of the truncated Stieltjes moment problem { ${MP}_{\kappa}^{k}({\mathbf s},2n_N-1)$} can be rewritten in terms of the generalized Stieltjes polynomials $P_{{j}}^+$ and
$Q_{{j}}^+$, $j=2N-1,2N$.
\begin{theorem} {\rm (\cite{DK17})}\label{thm:4.2}
Let a sequence ${\mathbf s}=\{s_i\}_{i=0}^{\infty}\in\mathcal{H}_{\kappa}^{k}$ be regular, $\mathcal{N}({\mathbf s})=\{n_j\}_{j=1}^\infty$, and let  $P_{{j}}^+$ and
$Q_{{j}}^+$  be generalized Stieltjes polynomials  of the first and second kind. Then:
\begin{enumerate}
  \item [(i)] A nondegenerate even moment problem
$MP_{\kappa}^{k}(\textbf{s},2n_N-1)$
is solvable, if and only if
\eqref{eq:Gen_Solv+x} holds.
\item [(ii)] $f\in \mathcal{M}_{\kappa}^{k}(\textbf{s},
  2n_{N}-1)$ if and only if $f$ admits the
  representation
\begin{equation}\label{eq:LFT_W}
    f(z)=
    \frac{Q^{+}_{2N-1}(z){f_N}(z)+Q^{+}_{2N}(z)}{P^{+}_{2N-1}(z){f_N}(z)+P^{+}_{2N}(z)},
\end{equation}
   where
${f_N}(z)$ satisfies the conditions
\begin{equation}\label{eq:tau_N+x}
    {f_N} \in
    {\mathbf N}_{\kappa-\kappa_{N}}^{k-k_{N}^+}\quad\mbox{and}\quad
  {f_N}(z)=o(1),\quad
    z\widehat{\rightarrow}\infty.
\end{equation}
\end{enumerate}
\end{theorem}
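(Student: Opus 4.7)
The plan is to base the argument on the generalized Stieltjes continued fraction expansion~\eqref{eq:con_fracx} from~\cite{DK16} together with the associated system of difference equations~\eqref{s4.3Ix}. Starting from $f\in\cM_\kappa^k({\mathbf s},2n_N-1)$, I would iterate the two-substep Schur algorithm exactly $N$ times, peeling off the atoms $(m_j(z),l_j)$ for $j=1,\dots,N$, until only a tail $f_N$ remains. The regularity hypothesis~\eqref{eq:int_p} is what guarantees that each of the $2N$ substeps is nondegenerate and that the two substep types can be separated cleanly.

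\textbf{Passage from difference equations to LFT.} Next I would encode each substep by a $2\times 2$ transfer matrix and prove by induction on $j$ that the cumulative product of the first $2N$ transfer matrices equals
\[
\begin{pmatrix} Q^+_{2N-1}(z) & Q^+_{2N}(z)\\ P^+_{2N-1}(z) & P^+_{2N}(z)\end{pmatrix},
\]
which is a direct consequence of~\eqref{s4.3Ix} and the initial data~\eqref{system1.1Ix}. Translating this identity into linear-fractional form converts the truncation of~\eqref{eq:con_fracx} at $N$ atoms into the representation~\eqref{eq:LFT_W} with tail $f_N$. This step is essentially the standard three-term-recurrence calculus adapted to the atomic structure of the generalized Stieltjes $P$-fraction.

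\textbf{Proof of the two parts.} For the ``if'' direction of (ii), I would substitute any $f_N$ satisfying~\eqref{eq:tau_N+x} into~\eqref{eq:LFT_W} and verify, first, that the asymptotic expansion~\eqref{3p.2.3x} to order $2n_N-1$ is recovered from $f_N(z)=o(1)$ via the known asymptotics of $P^+_j, Q^+_j$ at infinity, and second, that the resulting $f$ lies in ${\mathbf N}_\kappa^k$, using that an LFT by a transfer matrix of either substep type contributes exactly $\kappa_N$ or $k_N^+$ to the corresponding negative-squares index. For the converse direction, I would run the Schur algorithm on $f$ itself: regularity forces it to continue for exactly $2N$ substeps, to reproduce the same atoms as the $P$-fraction of $\mathbf s$, and to leave a remainder $f_N$ of the required class. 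Part (i) is then immediate: an $f\in\cM_\kappa^k({\mathbf s},2n_N-1)$ exists if and only if an admissible tail $f_N$ exists, which requires precisely $\kappa_N\le\kappa$ and $k_N^+\le k$.

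\textbf{Main obstacle.} The principal technical difficulty I foresee is the bookkeeping of the negative-squares indices under the two substep types: one must prove that the odd substep (removing $-zm_j(z)$) and the even substep (removing $l_j$) each decrease exactly one of $\kappa$ and $k$ by a prescribed amount, so that after the full $N$ cycles exactly $\kappa_N$ negative squares of the Nevanlinna kernel of $f$ and $k_N^+$ negative squares of the kernel of $zf$ have been accounted for. The regularity assumption~\eqref{eq:int_p} is what makes this split exact; without it the two substeps coalesce into a single Schur step of variable height, and the clean distribution of the deficits $\kappa_N$ and $k_N^+$ between the two classes would no longer be available.
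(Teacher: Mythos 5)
Your outline is essentially the argument underlying this theorem: the paper itself only cites \cite{DK17} for the statement, but all the ingredients you list are exactly the ones it assembles — the two-substep Schur algorithm of Section~\ref{sec:2.2}, the identification of the solutions of~\eqref{s4.3} with $P_j^+,Q_j^+$ (Proposition~\ref{prop:5.4}), the factorization $W_{2N}=M_1L_1\cdots M_NL_N$, and the index bookkeeping via the equivalences~\eqref{eq:w_m_kk} and~\eqref{eq:w_l_kk}, whose hypotheses~\eqref{eq:Nev_tau} and~\eqref{eq:Nev_phi} propagate through the chain precisely because $l_j\ne 0$ (regularity). So the proposal is correct and follows the same route; the only point to make fully explicit is the verification that each intermediate parameter satisfies the relevant $o$-condition so that the exact (not merely one-sided) index count survives all $2N$ substeps.
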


In what follows for every $2\times 2$ matrix $W=(w_{ij})_{i,j=1}^2$ we associate the linear-fractional transformation
\begin{equation}\label{eq:LFT}
T_{W}[\tau]:=\frac{w_{11}\tau+w_{12}}{w_{21}\tau+w_{22}}.
\end{equation}
Denote by ${W_{2N}(z)}$ the coefficient matrix of the linear fractional transform~\eqref{eq:LFT_W}:
\begin{equation}\label{eq:ResM_2N}
  W_{2N}(z)=\left(
                \begin{array}{cc}
                  Q^{+}_{2N-1}(z) & Q^{+}_{2N}(z) \\
                  P^{+}_{2N-1}(z) & P^{+}_{2N}(z) \\
                \end{array}
              \right).
\end{equation}
 Then the formula \eqref{eq:LFT_W} can be rewritten as
\begin{equation}\label{eq:LFTx}
  f(z)=T_{W_{2N}(z)}[f_N(z)].
\end{equation}
The structure of the continued fraction~\eqref{eq:con_fracx} leads to the following  factorization of
the matrix valued function ${W_{2N}(z)}$:
\[
    W_{2N}(z)=M_{1}(z)L_{1}\ldots M_{N}(z)L_{N},
\]
where the matrices $M_j(z)$ and $L_j$ are defined by
\begin{equation}\label{eq:LMjIx}
M_{j}(z)=\begin{pmatrix}
1 & 0 \\
-zm_{j}(z) & 1 \\
\end{pmatrix}{,}\quad\mbox{and}\quad
L_{j}=\begin{pmatrix}
1 & l_{j} \\
0 & 1 \\
\end{pmatrix}\quad j\in \dN.
\end{equation}

Continued fractions of the form \eqref{eq:con_fracx} with positive and negative masses $m_j$ were studied by Beals, Sattinger
and Szmigielski \cite{BSS00} in connection with the theory of multi-peakon solutions of the Camassa-Holm equation. In \cite{EKost14} Eckhardt and  Kostenko showed that  inverse spectral problem for multi-peakon solutions of the
Camassa-Holm equation is solvable in the class of continued fractions of the form~ \eqref{eq:con_fracx} with polynomials
$m_j(z) = d_jz + m_j$ of formal degree 1. In \cite{EKost18}  the spectral theory of continued fractions \eqref{eq:con_fracx} was treated within a classical Hamburger moment problem associated with this sequence.

The  full indefinite Stieltjes moment problem $MP_{\kappa}^{k}(\textbf{s})$ for regular sequences $\textbf{s}\in \cH_\kappa^{k,reg}$ was considered in~\cite{DK17} via the operator approach. The moment problem $MP_{\kappa}^{k}(\textbf{s})$ was treated in~\cite{DK17} as a problem of extension theory for a symmetric operator generated by this generalized Jacobi matrix.
In that paper we used quite advanced tools: the theory of boundary triples developed in~\cite{GG84,DM87,DM95},   and  the M.G. Kre\u{\i}n theory of resolvent matrices extended in~\cite{D91,D99} to the case of indefinite inner spaces.

In the present paper we are going to use  elementary tools in order to make the presentation available for a wider audience.
The main idea  is to use the factorization formula for
the coefficients matrix $W_{2j}(z)$
\begin{equation}\label{eq:W_reduceI}
  W_{2j}(z)=W_{2N}(z)W^{(N)}_{2(j-N)}(z),\quad j>N,\,\, j,N\in\dN,
\end{equation}
which allows to reduce the  indefinite Stieltjes moment problem $MP_{\kappa}^{k}(\textbf{s}, 2j)$ to some classical Stieltjes moment problem { $MP_{0}^{0}(\textbf{s}^{(N)}, 2(j-N))$} with the resolvent matrix $W^{(N)}_{2(j-N)}(z)$. Then all the known results for the classical Stieltjes moment problem can be translated to the indefinite Stieltjes moment problem.
In particular, in Theorem~\ref{thm:Full_MP} it is shown that the problem $MP_{\kappa}^{k}(\textbf{s})$ is indeterminate if and only if
\begin{equation}\label{eq:indeterm}
  M:=\sum_{j=1}^\infty m_j(0)<\infty\quad\mbox{and}\quad
  L:=\sum_{j=1}^\infty l_j<\infty.
\end{equation}
In the classical case, polynomials $m_j(z)$ are constants and the criterion~\eqref{eq:indeterm} coincides with the well known Stieltjes criterion, see~\cite[Theorem 0.4]{Akh}.

If \eqref{eq:indeterm} is in force, then the matrix valued functions ${W}_{2j}(z)$  converge to an entire matrix valued function ${W}_{\infty}^{+}(z)$ of order $1/2$ and
the linear fractional transformation~\eqref{eq:LFTx} generated by the matrix valued function ${W}_{\infty}(z)$ provides a description of the set
$\mathcal{M}_{\kappa}^{k}(\textbf{s})$.

In Section~\ref{sec:5} Pad\'{e} approximants for formal power series corresponding to an indefinite Stieltjes moment problem are calculated.
As was shown in~\cite{DD07} the diagonal Pad\'{e} approximants for formal power series corresponding to an indefinite
Hamburger moment problem are represented as a ratio of the Lanzcos polynomials of the 2-nd and the 1-st kind,~\cite{DD07}.
In Theorem~\ref{prop:sub_Diag_Pade} we show that the sub-diagonal Pad\'{e} approximants of the corresponding formal power series is a ratio of the generalized Stieltjes polynomials of the 2-nd and the 1-st kind. 
In Theorem~\ref{thm:Pade} convergence of Pad\'{e} approximants is derived from the classical results using the formula~\eqref{eq:W_reduceI}.

In Section~\ref{sec:6} the results are illustrated by an example of indefinite moment problem associated with Laguerre polynomials $L_n(z,\alpha)$ in the non-classical case $\alpha<-1$.


\section{Preliminaries}
\subsection{Generalized Nevanlinna functions}


A function $f\in
\mathbf{N}_{\kappa}$  is said to belong to the class $\mathbf{N}_{\kappa,-\ell}$ $(\kappa,\ell\in\dZ_+:=\dN\cup\{0\})$ if   $f$ admits the asymptotic expansion \eqref{3p.2.3x} for some real numbers $s_0,\dots,s_{\ell}$.
Let us also set
\begin{equation}\label{6p.eq:exp2}
    \mathbf{N}_{\kappa,-\infty}:=\bigcap_{n\geq0}\mathbf{N}_{\kappa,-2n}.
\end{equation}
Every real polynomial
$P(z)=p_{\nu}z^{\nu}+
\ldots+p_{1}z+p_{0}$ of
degree $\nu$ belongs to a class $\mathbf{N}_{\kappa_{-}(P)}$, where
the index $\kappa_{-}(P)$ can be evaluated by
(see~\cite[Lemma~3.5]{KL77})
\begin{equation}\label{3p.kappaP}
\kappa_{-}(P)=\left\{
\begin{array}{cl}
\left[\frac{\nu+1}{2}\right],&        \mbox{ if } p_{\nu}<0; \mbox{ and } \nu  \mbox{ is odd };\\
\left[\frac{\nu}{2}\right],&  \mbox{ otherwise } .
\end{array}
\right.
\end{equation}




Recall,  that a  function $f\in \mathbf{N}_{\kappa}$  is said to be from the generalized Stieltjes
class $\mathbf{N}_{\kappa}^{\pm k}$, 
if $z^{\pm 1}f(z)$ belongs to $ \mathbf{N}_k$    $\left(\kappa,k\in\mathbb{Z}_{+}\right)$.
Let us collect some   properties of generalized Nevanlinna functions, see \cite{KL77}, \cite{D91}.
{
\begin{proposition}{\rm (\cite{KL77})} \label{prop:2.1}
Let 
$\kappa,\kappa_1,k\in\dZ_+$. Then the following statements hold:
\begin{enumerate}
  \item [(i)] $f\in \mathbf{N}_{\kappa}\Longleftrightarrow-\frac{1}{f}\in \mathbf{N}_{\kappa}$;
\item [(ii)] $f\in \mathbf{N}_{\kappa}^k\Longleftrightarrow-\frac{1}{f}\in \mathbf{N}_{\kappa}^{-k}$;
\item [(iii)] $f\in \mathbf{N}_{\kappa}^k\Longleftrightarrow zf(z)\in \mathbf{N}_{k}^{-\kappa}$;
\item [(iv)] if $f\in \mathbf{N}_{\kappa}$, $f_{1}\in \mathbf{N}_{\kappa_{1}}$ then $f+f_{1}\in \mathbf{N}_{\kappa'}$, where
  $\kappa'\leq\kappa+\kappa_{1}$. If, in addition,  $f(iy)=o(y)$ as
$y\rightarrow\infty$ and $f_{1}$ is a polynomial,  then 
$f+f_{1}\in \mathbf{N}_{\kappa+\kappa_{1}}$;
\item[(v)] if a function $f\in \mathbf{N}_{\kappa}$ has an asymptotic
expansion~\eqref{int4} for evrey $n\in\dN$,
then there exists $\kappa'\le\kappa$, such that
$\{s_j\}_{j=0}^{\infty}\in{\mathcal{H}}_{\kappa'}$.
  \end{enumerate}
\end{proposition}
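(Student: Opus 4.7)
The plan is to reduce all five assertions to elementary manipulations of the Nevanlinna kernel
\[
K_f(z,w):=\frac{f(z)-\overline{f(w)}}{z-\overline{w}},
\]
whose number of negative squares over ${\mathfrak h}_f\cap\mathbb{C}_+$ equals $\kappa$ by the definition of $\mathbf{N}_\kappa$. For (i) a direct computation gives
\[
K_{-1/f}(z,w)=\frac{K_f(z,w)}{f(z)\overline{f(w)}},
\]
so the substitution $\xi_j=f(z_j)\eta_j$ (valid generically, where $f(z_j)\ne 0$) transforms the Hermitian form $\sum K_{-1/f}(z_i,z_j)\xi_i\bar\xi_j$ into $\sum K_f(z_i,z_j)\eta_i\bar\eta_j$, preserving the number of negative squares; hence $\nu_-(K_{-1/f})=\nu_-(K_f)=\kappa$. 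Assertion (ii) then follows by applying (i) to $zf$ in place of $f$, since $-1/(zf)=z^{-1}(-1/f)$. Assertion (iii) is the tautology arising from unpacking the definitions: $f\in\mathbf{N}_\kappa^k$ reads $f\in\mathbf{N}_\kappa$ and $zf\in\mathbf{N}_k$, while $zf\in\mathbf{N}_k^{-\kappa}$ reads $zf\in\mathbf{N}_k$ and $z^{-1}(zf)=f\in\mathbf{N}_\kappa$.

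For the first half of (iv) I would use the linearity $K_{f+f_1}=K_f+K_{f_1}$: the associated Gram matrix is a sum of two Hermitian matrices with negative indices at most $\kappa$ and $\kappa_1$, and the standard inequality $\nu_-(A+B)\le\nu_-(A)+\nu_-(B)$ gives $\kappa'\le\kappa+\kappa_1$. The second half, which asserts equality under the additional hypotheses $f(iy)=o(y)$ and $f_1$ a polynomial, is where I expect the main difficulty. The idea is that the growth condition forces the polynomial part in the Krein--Langer integral representation of $f$ to have degree $0$, so the leading asymptotic of $f+f_1$ at $iy$ coincides with that of $f_1$; combining formula \eqref{3p.kappaP} applied to $f_1$ (which yields $\kappa_-(f_1)=\kappa_1$) with a careful analysis on a generic family of test points that separates the contributions of $f$ and of $f_1$ in $K_{f+f_1}$ should yield the matching lower bound $\kappa'\ge\kappa+\kappa_1$.

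For (v) I would invoke the standard Pontryagin-space realization of $f\in\mathbf{N}_\kappa$: there exist a self-adjoint operator $A$ on a space $\Pi_\kappa$ and a vector $\varphi$ such that, up to an at most linear polynomial summand, $f(z)=((A-z)^{-1}\varphi,\varphi)$. Expanding $(A-z)^{-1}=-\sum_{j\ge 0}A^j/z^{j+1}$ at infinity and matching with the asymptotic \eqref{int4} identifies $s_j=(A^j\varphi,\varphi)$, so the Hankel matrix $S_n$ is the Gram matrix of $\{A^i\varphi\}_{i=0}^{n-1}$ in $\Pi_\kappa$. Since $\Pi_\kappa$ has negative index $\kappa$, one obtains $\nu_-(S_n)\le\kappa$ for every $n$; as the sequence $\nu_-(S_n)$ is nondecreasing in $n$ (because $S_n$ is a principal submatrix of $S_{n+1}$) and bounded by $\kappa$, it stabilizes at some $\kappa'\le\kappa$, which is exactly the statement $\{s_j\}\in\mathcal{H}_{\kappa'}$. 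The only technical subtlety is to absorb any polynomial summand in the realization into the moment sequence before extracting the coefficients $s_j$, which is straightforward because that polynomial contributes only finitely many terms to the asymptotic expansion.
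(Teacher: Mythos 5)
First, a point of reference: the paper itself offers no proof of Proposition~\ref{prop:2.1} --- it is quoted verbatim from \cite{KL77} --- so your attempt can only be measured against the standard arguments in the literature. Items (i)--(iii) and the inequality $\kappa'\le\kappa+\kappa_1$ in (iv) are correct and proved the standard way: the kernel identity $K_{-1/f}(z,w)=K_f(z,w)/\bigl(f(z)\overline{f(w)}\bigr)$ gives a congruence of the Gram matrices on the generic set where $f(z_j)\ne 0$, (ii) and (iii) are formal consequences of (i) and of the definition of $\mathbf{N}_\kappa^{\pm k}$, and subadditivity of the negative index of Hermitian matrices gives the upper bound in (iv). Item (v) is also essentially right, provided you name the theorem you are invoking: the existence of a realization $f(z)=[(A-z)^{-1}\varphi,\varphi]$ with a self-adjoint relation $A$ in $\Pi_\kappa$ and $\varphi\in\bigcap_n\dom A^n$ satisfying $s_j=[A^j\varphi,\varphi]$ is itself a nontrivial Kre\u{\i}n--Langer theorem (the indefinite Hamburger--Nevanlinna theorem); once it is granted, the Gram-matrix bound $\nu_{-}(S_n)\le\kappa$, the interlacing monotonicity and the stabilization argument are all correct.

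The genuine gap is the second half of (iv). Subadditivity only yields $\kappa'\le\kappa+\kappa_1$, and your proposed route to the reverse inequality --- ``a careful analysis on a generic family of test points that separates the contributions of $f$ and of $f_1$ in $K_{f+f_1}$'' --- is not an argument: negative squares of a sum of kernels can cancel, and no choice of test points by itself separates the two summands. What is actually needed is a localization principle for the negative squares, namely Proposition~\ref{prop:GPNT}: the negative index of a function in $\mathbf{N}_{\kappa}$ equals the total multiplicity of its poles in $\dC_+$ plus its generalized poles of nonpositive type in $\dR\cup\{\infty\}$. The hypothesis $f(iy)=o(y)$ forces $\kappa_\infty(f)=0$ by \eqref{infgpol}, so all $\kappa$ negative squares of $f$ are attached to points of $\dC_+\cup\dR$; a real polynomial $f_1$ has no poles and no finite generalized poles of nonpositive type, so all $\kappa_1=\kappa_{-}(f_1)$ of its negative squares sit at $\infty$ (this is where \eqref{3p.kappaP} enters). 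One then checks that adding the real entire function $f_1$ alters neither the poles of $f$ in $\dC_+$ nor the multiplicities $\kappa_\alpha(f)$ at finite $\alpha\in\dR$, while $f(iy)=o(y)$ guarantees $\kappa_\infty(f+f_1)=\kappa_\infty(f_1)$; summing the local multiplicities gives $\kappa'=\kappa+\kappa_1$. (The alternative is an operator-theoretic coupling of two minimal realizations, but some such global input is unavoidable; your sketch as written does not contain it.)
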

}

The notions of generalized poles 
of
non-positive type of a function ${f}\in {\bf N}_\kappa$  were introduced in~\cite{KL81}.
The following definitions are based on \cite{L86}. A point
$\alpha\in\dR$ is called a \textit{generalized pole} of non-positive type
(GPNT) of the function ${f}\in {\bf N}_\kappa$ with multiplicity
$\kappa_\alpha({f})$ if
\begin{equation}
\label{gpol} -\infty < \lim_{z\widehat{\rightarrow }\alpha}
(z-\alpha)^{2\kappa_{\alpha}+1}{f}(z) \leq 0,\quad 0 <
\lim_{z\widehat{\rightarrow }\alpha}
(z-\alpha)^{2\kappa_{\alpha}-1}{f}(z) \leq \infty.
\end{equation}
Similarly,
the point $\infty$ is called a generalized pole of $f$  of
nonpositive type (GPNT) with multiplicity
$\kappa_\infty(f)$ if
\begin{equation}
\label{infgpol} 0\leq\lim_{z\widehat{\rightarrow }\infty }
\frac{{f}(z)}{z^{2\kappa_{\infty}+1}} < \infty,\quad
-\infty\leq\lim_{z\widehat{\rightarrow }\infty }
\frac{{f}(z)}{z^{2\kappa_{\infty}-1}} < 0.
\end{equation}

The following fundamental result was proved in~\cite[Theorem 3.5]{KL81}.
\begin{proposition}\label{prop:GPNT}
Let $f\in \mathbf{N}_{\kappa}$. Then
the total multiplicity of the poles of $f$ in $\dC_+$ and the generalized poles of negative type of $f$ in $\dR\cup\{\infty\}$ is equal to $\kappa$.
\end{proposition}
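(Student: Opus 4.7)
The plan is to reduce the statement to a spectral-theoretic count via the Kre\u{\i}n--Langer operator representation of functions in $\mathbf{N}_\kappa$. Every $f\in\mathbf{N}_\kappa$ admits a representation of the form
\[
  f(z)=f(z_0)^*+(z-z_0^*)\bigl[(I+(z-z_0)(A-z)^{-1})u,u\bigr]_{\Pi_\kappa},\qquad z\in\rho(A),
\]
where $A$ is a self-adjoint linear relation in a Pontryagin space $(\Pi_\kappa,[\cdot,\cdot])$ of negative index $\kappa$ and $u\in\Pi_\kappa$ is cyclic with respect to $A$. The set of singularities of $f$ coincides with $\sigma(A)\cup\{\infty\}$ when $A$ has a nontrivial multi-valued part, so a bookkeeping of the spectrum of $A$ together with the inner product structure on the root subspaces will yield the total count.

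First I would handle the non-real spectrum. Since $\Pi_\kappa$ has finite negative index, a standard Pontryagin space argument shows that $\sigma(A)\setminus\dR$ consists of finitely many pairs $(\lambda,\overline\lambda)$ with $\lambda\in\dC_+$, and each algebraic eigenspace $\cR_\lambda$ is neutral and isomorphically paired with $\cR_{\overline\lambda}$ through $[\cdot,\cdot]$. Consequently the non-degenerate subspace $\cR_\lambda\dotplus\cR_{\overline\lambda}$ has signature $(m_\lambda,m_\lambda)$, contributing exactly $m_\lambda=\dim\cR_\lambda$ to $\mathrm{ind}_-\Pi_\kappa$; cyclicity of $u$ identifies $m_\lambda$ with the pole multiplicity of $f$ at $\lambda$.

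Next I would analyse each real point $\alpha\in\dR$ in $\sigma(A)$. The root subspace $\cR_\alpha$ is non-degenerate, invariant under $A$, and inherits an indefinite inner product; its Jordan structure determines $\kappa_\alpha:=\mathrm{ind}_-\cR_\alpha$. The main calculation here, which I expect to be the principal obstacle, is to match $\kappa_\alpha$ with the analytic condition \eqref{gpol}: one must show that the longest Jordan chain of $A\upharpoonright\cR_\alpha$ whose terminal vector is non-positive has length $2\kappa_\alpha$ or $2\kappa_\alpha+1$, and that this length governs precisely the asymptotic behaviour of $(z-\alpha)^{2\kappa_\alpha\pm 1}f(z)$ as $z\widehat\to\alpha$. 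This is done by expanding the resolvent on a Jordan basis of $\cR_\alpha$ and computing the leading Laurent coefficients of $f$ against $[\cdot,u]$. The analogous step at $\infty$ is carried out by treating the multi-valued part of $A$ (or, equivalently, by applying the inversion $z\mapsto -1/z$) and matching the growth condition \eqref{infgpol} to $\kappa_\infty:=\mathrm{ind}_-\mathrm{mul}\,A$.

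Finally, since cyclicity of $u$ forces $\Pi_\kappa$ to be the direct sum of the root subspaces listed above together with a positive Hilbert part contributing nothing to $\mathrm{ind}_-$, summing gives
\[
  \kappa=\mathrm{ind}_-\Pi_\kappa=\sum_{\lambda\in\sigma(A)\cap\dC_+}m_\lambda+\sum_{\alpha\in\sigma(A)\cap\dR}\kappa_\alpha+\kappa_\infty,
\]
which is exactly the asserted identity once the three summands are reinterpreted in terms of poles and generalized poles of non-positive type of $f$.
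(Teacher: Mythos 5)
The paper itself gives no proof of this proposition --- it is quoted from Kre\u{\i}n--Langer \cite[Theorem 3.5]{KL81} --- so your plan is being measured against the original argument rather than anything in the text. Your overall strategy (pass to a minimal self-adjoint model $A$ of $f$ in a Pontryagin space $\Pi_\kappa$ and localize the $\kappa$ negative squares at the spectral points of $A$) is indeed the route of that proof, and the non-real part of your argument is sound: for $\lambda\in\dC_+$ the root subspace $\cR_\lambda$ is finite-dimensional and neutral, it is skewly linked to $\cR_{\overline\lambda}$, the pair contributes $\dim\cR_\lambda$ to the negative index, and minimality identifies $\dim\cR_\lambda$ with the order of the pole of $f$ at $\lambda$.

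The gap is in your treatment of the real points and of $\infty$, i.e.\ exactly at the step you flag as ``the principal obstacle'': the method you propose there does not work in general. A generalized pole of non-positive type of $f$ need not be an isolated singularity; in the model it can be an eigenvalue of $A$ embedded in continuous spectrum (a critical point of the spectral function). In that case the root subspace $\cR_\alpha$ is in general infinite-dimensional and, crucially, degenerate --- its isotropic vectors need not be orthogonal to the rest of the space --- so both of your key structural claims fail: $\cR_\alpha$ is not non-degenerate, and $\Pi_\kappa$ does not split as the orthogonal direct sum of root subspaces and a positive Hilbert part. Consequently ``expanding the resolvent on a Jordan basis of $\cR_\alpha$ and computing the leading Laurent coefficients'' is only available when $\alpha$ is an isolated eigenvalue; at an embedded critical point $f$ has no Laurent expansion, and the asymptotics \eqref{gpol} must instead be extracted from the behaviour of the spectral function $E_t$ near $\alpha$ combined with a maximal non-positive invariant subspace of the root subspace. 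The correct count is $\kappa=\sum_{\lambda\in\dC_+}\dim\cR_\lambda+\sum_{\alpha\in\dR}\kappa_\alpha+\kappa_\infty$, where $\kappa_\alpha$ is the dimension of a maximal non-positive subspace of the (possibly degenerate) root subspace at $\alpha$, and proving that this $\kappa_\alpha$ coincides with the analytic multiplicity in \eqref{gpol}--\eqref{infgpol} is the technical core of \cite{KL81} and \cite{L86}; your plan leaves precisely that part unproved.
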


\subsection{Regular sequences and step--by--step algorithm}\label{sec:2.2}

In the present paper we will consider so-called regular  sequences $\textbf{s}$ from $\cH_{\kappa}^{k}$ introduced in~\cite{DK15}.
{\begin{definition}
 The sequence $\textbf{s}$ is called regular (and is denoted as $\textbf{s}\in \cH_{\kappa}^{k,reg}$), if one of the
 following equivalent conditions holds
 \begin{enumerate}
  \item [({i})] $P_{j}(0)\neq0$ for every $j\in \dN$;
  \item [({ii})] $D_{{n}_{j}}^{+}:=\det S_{n_j}^+\neq0$ for every $j\in \dN$;
  \item [(iii)] $D_{{n}_{j}-1}^{+}\neq0$ for every $j\in \dN$.
  \end{enumerate}
\end{definition}
}
The following step--by--step algorithm of solving the indefinite Stieltjes moment problem for regular sequences ${\bf s}\in \mathcal{H}_{\kappa}^{k,reg}$ was developed in~\cite{DK17}. For { an} indefinite Hamburger moment problem such an algorithm was elaborated in~\cite{Der03,DD04}.

Let $f\in\cM_\kappa^k({\bf s})$ and let $n_1$ be the first normal index, i.e.
\[
s_0=\dots=s_{n_1-2}=0, \quad s_{n_1-1}\ne 0.
\]
By Proposition~\ref{prop:2.1} (4) $-1/f\in \mathbf{N}_\kappa^{-k}$ and
\begin{equation}\label{eq:1/f}
  -\frac{1}{f(z)}=P_1(z)+o(1),\quad{as}\quad z\widehat{\rightarrow }\infty,
\end{equation}
where $P_1(z)$ is the Lanzcos polynomial of degree: $\deg P_1=n_1$. Since  ${\bf s}$ is regular, then $-l_1^{-1}:=P_1(0)\ne 0$ and hence~\eqref{eq:1/f} can be rewritten as
\begin{equation}\label{eq:1/f_2}
  -\frac{1}{f(z)}=zm_1(z)-\frac{1}{g_1(z)}=zm_1(z)-\frac{1}{l_1+f_1(z)}.
\end{equation}
Let $\kappa_1\!=\!\kappa_-(zm_1)$, $k_1\!=\!\kappa_-(m_1)$,  $k_1^+=k_1+\kappa_-(zl_1)$. Then
by Proposition~\ref{prop:2.1} (2), (4)
\[
zm_1\in\mathbf{N}_{\kappa_1}^{-k_1}\Rightarrow
-\frac{1}{g_1}=
-\frac{1}{f}- zm_1\in\mathbf{N}_{\kappa-\kappa_1}^{-(k-k_1)}\Rightarrow
g_1\in\mathbf{N}_{\kappa-\kappa_1}^{k-k_1}\Rightarrow
f_1\in\mathbf{N}_{\kappa-\kappa_1}^{k-k_1^+}.
\]
The formula~\eqref{eq:1/f_2} yields the following representation of $f\in\cM_\kappa^k({\bf s})$:
\begin{equation}\label{eq:ConFr1}
    f(z)=\displaystyle\frac{\displaystyle1}{\displaystyle -z m_{1}(z)+\displaystyle\frac{1}{\displaystyle
    l_{1}+f_1(z)}},
\end{equation}
where $f_1\in\mathbf{N}_{\kappa-\kappa_1}^{k-k_1^+}$ satisfies the asymptotic expansion
\begin{equation}\label{int4A}
    f_1(z)=-\frac{s^{(1)}_{0}}{z}-\frac{s^{(1)}_{1}}{z^2}-\cdots-\frac{s^{(1)}_{2n}}{z^{2n+1}}
    +o\left(\frac{1}{z^{2n+1}}\right),\quad\quad z\widehat{\rightarrow}\infty
\end{equation}
with some real  numbers $s^{(1)}_{i}$, $i\in\dZ_+$.
 Explicit formulae for calculation of the sequence $\mathbf{s}^{(1)}=\{s^{(1)}_{i}\}_{i=0}^\infty$ are presented in~\cite[Remarks 3.4, 3.6]{DK17}, the polynomial $m_1(z)$ and the number $l_1$ can be found by (see~\cite[(2.16), (3.27)]{DK17}):
\begin{equation}\label{eq:m_1}
        m_1(z)=\frac{(-1)^{n_1+1}}{D_{n_1}}
        \begin{vmatrix}
            0 & \ldots & 0 & s_{{n_1}-1} & s_{{n_1}} \\
            \vdots &  & \ldots & \ldots & \vdots \\
            s_{{n_1}-1} & \ldots & \ldots & \ldots & s_{2{n_1}-2} \\
            1 & z & \ldots& z^{{n_1}-2}& z^{{n_1}-1} \\
        \end{vmatrix}{,}
\end{equation}
\begin{equation}\label{eq:l_1}
l_1=(-1)^{{n_1}
    +1}s_{{n_1}-1}\frac{D_{n_{1}}}{D_{n_{1}}^+},\quad
    (D_\nu:=\det S_\nu,\,D_\nu^+:=\det S_\nu^+).
\end{equation}
 This completes the first step of the Schur algorithm.

 Applying this algorithm repeatidly $N$ times one obtains a function $f_N\in \mathbf{N}_{\kappa-\kappa_N}^{k-k_N^+}$, with $\kappa_N$ and $k_N$ given by~\eqref{eq:Gen_Solv+x}, connected with $f$ by the formula~\eqref{eq:con_fracx}.
 Moreover, the function $f_N$  satisfies the asymptotic expansion
\begin{equation}\label{eq:f_Nas}
    f_N(z)=-\frac{s^{(N)}_{0}}{z}-\frac{s^{(N)}_{1}}{z^2}-\cdots-\frac{s^{(N)}_{2n}}{z^{2n+1}}
    +o\left(\frac{1}{z^{2n+1}}\right),\quad\quad z\widehat{\rightarrow}\infty
\end{equation}
with some real $s^{(N)}_{i}$, $i\in\dZ_+$.

\begin{lemma} \label{lem:2.4}{\rm (\cite{DK15})}
 Let ${\mathbf s}\in\mathcal{H}_{\kappa}^{k,
reg}$. Then there exist  sequences of polynomials $m_j(z)$ and numbers  ${l}_j$ such that
the $2j-$th convergent $\frac{u_{2j}}{v_{2j}}$ of the continued fraction
\begin{equation}\label{s4.2}
    \displaystyle\frac{\displaystyle1}{\displaystyle -z m_{1}(z)+\displaystyle\frac{1}{\displaystyle
    l_{1}+\dots\frac{1}{\displaystyle -z m_{j}(z) +\frac{1}{\displaystyle
    l_{j}+\dots}}}}
\end{equation}
coincides with the $j-$th convergent of the $P-$fraction \eqref{eq:Pfrac} corresponding to the
sequence ${\mathbf s}$.  Let  functions $f$ and $f_N$ ($N\in\dN$) be connected by~\eqref{eq:con_fracx} and let $\mathbf{s}^{(N)}$ is the $N-$th induced sequence. Then:
\begin{equation}\label{eq:InducedMP_Tr}
  f\in\cM_\kappa^k(\mathbf{s},\ell) \Longleftrightarrow  f_N\in\cM_{\kappa-\kappa_N}^{k-k_N^+}(\mathbf{s}^{(N)},\ell-2n_N),
\end{equation}
\begin{equation}\label{eq:InducedMP}
  f\in\cM_\kappa^k(\mathbf{s}) \Longleftrightarrow  f_N\in\cM_{\kappa-\kappa_N}^{k-k_N^+}(\mathbf{s}^{(N)}),
\end{equation}
where $\kappa_N$ and $k_N^+$ are  given by~\eqref{eq:Gen_Solv+x}.
\end{lemma}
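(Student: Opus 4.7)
The plan is to build the lemma out of the single Schur step recorded in Section~\ref{sec:2.2} and then iterate. That first step starts from $f\in\cM_\kappa^k(\mathbf s,\ell)$, uses regularity of $\mathbf s$ (through $P_1(0)=-1/l_1\neq 0$) to extract a polynomial $m_1(z)$ and a number $l_1\in\dR\setminus\{0\}$ satisfying \eqref{eq:1/f_2}, and delivers a tail $f_1\in\mathbf N_{\kappa-\kappa_1}^{k-k_1^+}$ with induced asymptotic expansion \eqref{int4A}. Iterating $N$ times produces the finite continued fraction \eqref{s4.2} with $f_N$ at the bottom, together with the sequences $\{m_j\}$, $\{l_j\}$ and the induced sequence $\mathbf s^{(N)}$.

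To see that the $2j$-th convergent $u_{2j}/v_{2j}$ of \eqref{s4.2} coincides with the $j$-th convergent $-Q_j/P_j$ of the $P$-fraction \eqref{eq:Pfrac}, I would argue by induction on $j$ in the matrix picture. The $2j$-th convergent equals $T_{M_1 L_1\cdots M_j L_j}[0]$ with $M_j$, $L_j$ as in~\eqref{eq:LMjIx}; on the $P$-fraction side, the pair $(-Q_j,P_j)$ comes from an analogous matrix recurrence built out of the atoms $(a_j,b_j)$ via \eqref{6p.eq:3-termrecrel}. Multiplying out $M_jL_j$ and identifying leading coefficients pins down $a_j$, monic of degree $\ell_j=n_{j+1}-n_j$, and $b_j\in\dR\setminus\{0\}$, both expressible through $m_j$ and $l_j$. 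The two matrix recursions then coincide modulo a diagonal rescaling that cancels in the linear-fractional image, giving the claimed identity of convergents.

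For the equivalences \eqref{eq:InducedMP_Tr} and \eqref{eq:InducedMP} it is enough to treat $N=1$ and iterate. The forward direction is exactly what the Schur step produces: the asymptotic length drops by $2n_1$ because the block $zm_1(z)$ of degree $n_1$ is removed twice in the inversion--and--peel procedure, and the class $\mathbf N_{\kappa-\kappa_1}^{k-k_1^+}$ of $f_1$ is tracked by Proposition~\ref{prop:2.1}(i),(ii),(iv). For the converse, start from $f_1\in\cM_{\kappa-\kappa_1}^{k-k_1^+}(\mathbf s^{(1)},\ell-2n_1)$, define $f$ by \eqref{eq:ConFr1}, and observe that $l_1+f_1\in\mathbf N_{\kappa-\kappa_1}^{k-k_1}$, since $z(l_1+f_1)=zl_1+zf_1$ has exact negative index $\kappa_-(zl_1)+(k-k_1^+)=k-k_1$ by the sharpened Proposition~\ref{prop:2.1}(iv) applied with the polynomial $zl_1$. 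Passing through Proposition~\ref{prop:2.1}(ii) and adding $zm_1(z)\in\mathbf N_{\kappa_1}^{-k_1}$ (again with the sharpened Proposition~\ref{prop:2.1}(iv), whose hypothesis $(-1/(l_1+f_1))(iy)=O(1)=o(y)$ is in force) yields $-1/f\in\mathbf N_\kappa^{-k}$, i.e.\ $f\in\mathbf N_\kappa^k$; a direct expansion of \eqref{eq:ConFr1} in powers of $1/z$ then recovers the moments $s_0,\ldots,s_\ell$.

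The main obstacle is a double bookkeeping task: matching (i) the degrees and leading coefficients of $a_j$ and $m_j$ so that the two matrix recursions line up, and (ii) the additivity of the negative indices $\kappa_N=\sum_{j=1}^N\kappa_j$ and $k_N^+=\sum_{j=1}^N k_j^+$. Both rest on the non-cancellation clause of Proposition~\ref{prop:2.1}(iv), applicable at each stage because every tail $f_j$ satisfies $f_j(iy)=o(1)$ at $\infty$; the algebraic side of (i) is a routine but delicate verification starting from the determinantal formulas~\eqref{eq:m_1} and \eqref{eq:l_1}.
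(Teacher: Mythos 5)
The paper itself does not prove Lemma~\ref{lem:2.4}: it is imported from \cite{DK15}, and the only indication of the original argument is the remark before Theorem~\ref{thm:5.1} that in \cite{DK15} the expansion \eqref{s4.2} is obtained by expanding the unwrapping transform $zf(z^2)$ into a $P$-fraction and reading off the relations \eqref{s4.9}--\eqref{s4.10} between the atoms $(a_j,b_j)$ and the parameters $(m_j,l_j)$. Your route is genuinely different: you iterate the one-step Schur algorithm of Section~\ref{sec:2.2} and compare the two matrix recursions ($M_1L_1\cdots M_jL_j$ against the transfer matrices of the $P$-fraction) directly. This is viable --- the identity $u_{2j}/v_{2j}=Q_{2j}^+/P_{2j}^+=-Q_j/P_j$ is Definition~\ref{def:St_Pol} read backwards, and your index bookkeeping for the converse of \eqref{eq:InducedMP_Tr}, via the sharpened additivity clause of Proposition~\ref{prop:2.1}(iv) applied with the polynomials $zl_1$ and $zm_1$ and the tail conditions $f_1(iy)=o(1)$, $(-1/g_1)(iy)=O(1)$, is exactly the mechanism that upgrades the inequalities $\kappa'\le\kappa$, $k'\le k$ to equalities. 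What the unwrapping-transform proof buys is that the splitting of one $P$-fraction step into the two substeps $M_j$, $L_j$ falls out of known $P$-fraction theory applied to $zf(z^2)$; what your proof buys is that it stays inside the Stieltjes picture and makes the resolvent-matrix factorization visible, which is the viewpoint the rest of the paper actually exploits.

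Two points need tightening before the induction closes. First, the existence of the parameters $(m_j,l_j)$ and the coincidence of convergents is a purely algebraic statement about $\mathbf s$ (equivalently about the atoms $(a_j,b_j)$, whose existence is the Kronecker/\cite{DD04} result); routing their construction through a solution $f$ of the moment problem, as your first paragraph does, makes a formal fact contingent on solvability. It is cleaner to define $d_j$, $l_j$, $m_j$ from $P_{j-1}(0)$, $P_j(0)$ and $\wt b_{j-1}$ as in \eqref{l.2.6.we1} and verify the recursion identity formally. Second, regularity is used at every stage, not only at $j=1$: the splitting $a_{j-1}(z)=d_j^{-1}\bigl(zm_j(z)-(l_{j-1}^{-1}+l_j^{-1})\bigr)$ in \eqref{s4.10} requires $P_j(0)\neq0$ for each $j$, and your ``iterate $N$ times'' step uses this silently. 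Neither point is fatal, but both must be made explicit.
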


In the  case of a regular sequence $\mathbf{s}\in\cH_\kappa^k$ the parameters ${l}_j$ and $m_j(z)$ in~\eqref{s4.2} can be calculated
recursively by the above  Schur algorithm in terms of the sequence  ${\mathbf s}$:
\begin{equation}\label{eq:ml_j}
        m_j(z)=\frac{(-1)^{\nu+1}}{D^{(j-1)}_{\nu}}
        \begin{vmatrix}
            0 & \ldots & 0 & s^{(j-1)}_{\nu-1} & s^{(j-1)}_{\nu} \\
            \vdots &  & \ldots & \ldots & \vdots \\
            s^{(j-1)}_{\nu-1} & \ldots & \ldots & \ldots & s^{(j-1)}_{2\nu_{}-2} \\
            1 & z & \ldots& z^{\nu-2}& z^{\nu-1} \\
        \end{vmatrix},
\end{equation}
where $D^{(j)}_\nu:=\det S^{(j)}_\nu$, $\nu=n_{j}-n_{j-1}$ and
\begin{equation}\label{eq:l_j0}
l_j
   =(-1)^{\nu+1}\frac{D^{(j-1)}_{\nu}}{\left(D^{(j-1)}_{\nu}\right)^+}\quad (j=1,\dots,N-1).
\end{equation}

The $N-$th induced sequence $\mathbf{s}^{(N)}$ can be found as the sequence of  coefficients of the series expansion $-\sum_{i=0}^\infty{s_i^{(N)}}{z^{-(i+1)}}$ corresponding to the continued fraction
 \begin{equation}\label{s8.0}
    \frac{1}{\displaystyle -zm_{N+1}(z)+\frac{1}{\displaystyle
    l_{N+1}+\frac{1}{\displaystyle -zm_{N+2}(z)+\frac{1}{\ddots} }}}.
\end{equation}

\subsection{Generalized Stieltjes  continued fractions}
 In~\cite{DK15} the  expansion~\eqref{s4.2} of $f\in\cM_\kappa^k(\mathbf{s})$ into a generalized Stieltjes fraction for $\textbf{s}\in\cH_{\kappa}^{k,reg}$ was derived from the expansion of its unwrapping transform $zf(z^2)$ into the $P-$fraction.

\begin{theorem}  \label{thm:5.1}{\rm (\cite{DK15})}
 Let ${\mathbf s}\in\mathcal{H}_{\kappa}^{k,
reg}$ and let the $P-$fraction \eqref{eq:Pfrac} and the generalized $S-$fraction (\ref{s4.2})
correspond to the sequence ${\mathbf s}$. Then the parameters ${l}_j$ and $m_j(z)$
($j\in\mathbb{Z}_+$) of the generalized $\mathbf{S}-$fraction (\ref{s4.2})
are connected with the parameters $b_j$ and $a_j(z)$
($j\in\mathbb{N}$) of the $P-$fraction \eqref{eq:Pfrac} by the
equalities
\begin{equation}\label{s4.9}
    b_{0}=\frac{1}{d_{1}},\quad a_{0}(z)=\frac{1}{d_{1}}\left(zm_{1}(z)-\frac{1}{l_{1}}\right),
\end{equation}
\begin{equation}\label{s4.10}
     b_{j}=\frac{1}{l_{j}^{2}d_{j}d_{j+1}},\quad
     a_{j}(z)=\frac{1}{d_{j+1}}\left(zm_{j+1}(z)-\left(\frac{1}{l_{j}}
    +\frac{1}{l_{j+1}}\right)\right),
\end{equation}
where $d_{j}$ is the leading coefficient of $m_{j}(z)$ $(j=1,\dots,N-1)$.
\end{theorem}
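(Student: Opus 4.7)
The plan is to use Lemma~\ref{lem:2.4} -- which identifies the $2j$-th convergent of the generalized $S$-fraction \eqref{s4.2} with the $j$-th convergent of the $P$-fraction \eqref{eq:Pfrac} -- together with the uniqueness of the $P$-fraction expansion of a formal Laurent series (see~\cite{Kro81,Mag62}). Granted this uniqueness, it suffices to verify by induction on $j$ that the parameters $(b_j,a_j(z))$ defined by \eqref{s4.9}--\eqref{s4.10} indeed reproduce the $j$-th $P$-convergent as the $2j$-th $S$-convergent, with $a_j(z)$ monic of degree $\ell_j=n_{j+1}-n_j$.

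For the base case \eqref{s4.9}, I would truncate \eqref{s4.2} at $l_1$, which gives $l_1/(1-l_1 zm_1(z))$. Since $m_1$ has leading coefficient $d_1$, the denominator has leading term $-l_1 d_1 z^{\ell_0}$; dividing numerator and denominator by $-l_1 d_1$ rewrites the truncation as $(-1/d_1)\big/\bigl(\tfrac{1}{d_1}(zm_1(z)-1/l_1)\bigr)$, which is $-b_0/a_0(z)$ with $a_0$ monic of degree $\ell_0$ and parameters as in \eqref{s4.9}. For the inductive step, I would apply the same computation to the induced function $f_{j-1}$ appearing in \eqref{s4.2}: by Lemma~\ref{lem:2.4} applied to the induced sequence $\mathbf{s}^{(j-1)}$, the function $f_{j-1}$ has $S$-fraction parameters $(l_j,m_j,l_{j+1},m_{j+1},\dots)$ and its own $P$-fraction, whose leading parameters are (by the base case applied to $f_{j-1}$) $\widetilde{b}_0=1/d_j$ and $\widetilde{a}_0(z)=(zm_j(z)-1/l_j)/d_j$. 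It then remains to translate $(\widetilde{b}_0,\widetilde{a}_0)$ into $(b_j,a_j)$ by tracing the M\"obius transformation that relates the $P$-tail of $f$ at level $j$ to the $P$-tail of $f_{j-1}$ at level $0$. This transformation is obtained by combining one Schur step for the $P$-fraction, $h_j=a_{j-1}+b_{j-1}/f_{j-1}$, with the $S$-fraction identity $f_{j-1}=1/\bigl(-zm_j+1/(l_j+f_j)\bigr)$.

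Carrying out this calculation at the first inductive level yields $h_1=-f_1\big/\bigl(d_1 l_1(l_1+f_1)\bigr)$. Substituting the $P$-representation $f_1=-\widetilde{b}_0/(\widetilde{a}_0-\widetilde{h}_1)$, simplifying, and dividing by $d_1 l_1^2$ to make the new $a_1(z)$ monic gives $b_1=1/(l_1^2 d_1 d_2)$, $a_1(z)=(zm_2(z)-1/l_1-1/l_2)/d_2$, and the identification $h_2=\widetilde{h}_1$ of the $P$-tail of $f$ at level $2$ with the $P$-tail of $f_1$ at level $1$. This is exactly \eqref{s4.10} for $j=1$ and closes the induction. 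The main obstacle will be the careful bookkeeping of normalizations: the factor $1/(l_j^2 d_j d_{j+1})$ in $b_j$ assembles contributions from the two occurrences of $l_j$ flanking $m_{j+1}$ in the contracted pair of $S$-levels (yielding $1/l_j^2$) together with the monic normalizations of $a_{j-1}$ and $a_j$ (yielding $1/d_j$ and $1/d_{j+1}$), while the additive constant $-(1/l_j+1/l_{j+1})$ inside $a_j(z)$ records the leftover constant terms from contracting two consecutive $S$-blocks.
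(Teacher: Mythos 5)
Your computation is correct, and I have checked the key identities: the tail relation $h_j=\frac{-f_j}{d_j l_j(l_j+f_j)}$ propagates exactly as you claim, i.e.\ with $b_j=\frac{1}{l_j^2 d_j d_{j+1}}$ and $a_j(z)=\frac{1}{d_{j+1}}\bigl(zm_{j+1}(z)-\frac{1}{l_j}-\frac{1}{l_{j+1}}\bigr)$ one gets $\frac{b_j}{h_j}=a_j-h_{j+1}$ with $h_{j+1}$ of the same shape, and the base case $\frac{l_1}{1-l_1zm_1(z)}=\frac{-b_0}{a_0(z)}$ is immediate. Since each $a_j$ is monic of degree $\deg m_{j+1}+1\ge 1$ and each $b_j\ne 0$, the contracted fraction is a genuine $P$-fraction, and the appeal to uniqueness of the $P$-fraction expansion (its convergents coincide, for every $N$, with those of \eqref{eq:Pfrac} by Lemma~\ref{lem:2.4}) legitimately closes the argument; there is no circularity, because Lemma~\ref{lem:2.4} only supplies existence of the parameters $m_j,l_j$ and the matching of convergents.

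Note that the present paper states Theorem~\ref{thm:5.1} without proof, citing \cite{DK15}, and the surrounding text indicates that the route taken there is different: the generalized $S$-fraction and the parameter relations are obtained from the expansion of the unwrapping transform $zf(z^2)$ into a $P$-fraction. Your argument is instead the direct ``even contraction'' of the $S$-fraction, two levels at a time, which is the natural generalization of the classical Stieltjes computation recovering \cite[Appendix, (3),(4)]{Akh} when the $m_j$ are constants. What your approach buys is self-containedness and elementarity: everything reduces to a two-line M\"obius bookkeeping plus uniqueness of the $P$-fraction, with no need to introduce the unwrapped function or track how its normal indices interlace. What the unwrapping approach buys (and what your version does not address) is a simultaneous explanation of \emph{why} the $S$-fraction expansion exists for regular sequences and how the regularity condition $D_{n_j}^+\ne 0$ enters; in your argument that existence is simply imported from Lemma~\ref{lem:2.4}. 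As a proof of the stated parameter identities, your proposal is complete modulo writing out the general inductive step (which you have verified at level $j=1$ and which is literally index-shift invariant).
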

In the case when  $m_j(z)\equiv m_j$ are constants the generalized $S-$fraction reduces to the classical $S$-fraction~\eqref{s4.2Cx}
and the formulae \eqref{s4.9} and \eqref{s4.10} coincide with the well known classical formulae from \cite{St94}, see also~\cite[Appendix, (3), (4)]{Akh}.

Conversely, $m_j$ and
$l_j$ can be represented in terms of the $P-$fractions, see~\cite[Corollary 4.1]{DK15}.
\begin{corollary}
\label{l.2.6}
  Let ${\mathbf s}\in\mathcal{H}_{\kappa}^{k,reg}$, let ${\mathbf s}$ be associated with the $\mathbf{S}-$fraction \eqref{s4.2}, let $d_i$ be the leading coefficient of the polynomial $m_i(z)$  and let
\begin{equation}\label{eq:wtbj}
  \widetilde{b}_{i}=b_0b_1\ldots b_{i},\quad i\in\dZ_+.
\end{equation}
Then:
    \begin{equation}\label{l.2.6.we1}
        d_i=\frac{P_{i-1}^2(0)}{\widetilde{b}_{i-1}},\quad
         l_i=-\frac{\widetilde{b}_{i-1}}{P_{i-1}(0)P_i(0)},\quad
         m_i(z)=d_i\frac{a_{i-1}(z)-a_{i-1}(0)}{z},\quad i\in\dN.
     \end{equation}
\end{corollary}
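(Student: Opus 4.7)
The plan is to invert the relations of Theorem~\ref{thm:5.1} by induction on $i\in\dN$, using the three-term recurrence~\eqref{6p.eq:3-termrecrel} evaluated at $z=0$ as the bridge between the $P$-fraction data and the Lanzcos polynomials at the origin. Observe first that the formula for $m_i(z)$ is immediate from~\eqref{s4.9}--\eqref{s4.10} once $d_i$ is known: evaluating~\eqref{s4.9} at $z=0$ gives $a_0(0)=-1/(d_1 l_1)$, and evaluating~\eqref{s4.10} gives $a_j(0)=-(1/l_j+1/l_{j+1})/d_{j+1}$ for $j\ge 1$; subtracting these from the corresponding polynomial identities yields $a_{j}(z)-a_{j}(0)=z m_{j+1}(z)/d_{j+1}$ for all $j\ge 0$. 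Hence the third formula follows from the first.

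For the base case $i=1$, the initial conditions~\eqref{system1.1} together with the recurrence~\eqref{6p.eq:3-termrecrel} for $j=0$ give $P_1(z)=a_0(z)$, so $P_1(0)=a_0(0)$. Since $P_0\equiv 1$ and $\widetilde b_0=b_0$, the identity $d_1=1/b_0$ from~\eqref{s4.9} reads $d_1=P_0^2(0)/\widetilde b_0$, and then $l_1=-1/(d_1\,a_0(0))=-b_0/P_1(0)=-\widetilde b_0/(P_0(0)P_1(0))$, as required.

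For the inductive step, assume the formulas hold for indices $\le j$. From~\eqref{s4.10} we have $d_{j+1}=1/(l_j^2 d_j b_j)$; inserting the inductive expressions for $d_j$ and $l_j$ gives, after cancellation,
\begin{equation*}
d_{j+1}=\frac{P_{j-1}^2(0)\,P_j^2(0)\,\widetilde b_{j-1}}{\widetilde b_{j-1}^{\,2}\,P_{j-1}^2(0)\,b_j}=\frac{P_j^2(0)}{\widetilde b_j}.
\end{equation*}
Using this newly obtained value of $d_{j+1}$ in $1/l_{j+1}=-d_{j+1}a_j(0)-1/l_j$ and grouping the two terms over the common denominator $\widetilde b_j$ produces
\begin{equation*}
\frac{1}{l_{j+1}}=-\frac{P_j(0)\bigl(a_j(0)P_j(0)-b_j P_{j-1}(0)\bigr)}{\widetilde b_j}.
\end{equation*}
The three-term recurrence~\eqref{6p.eq:3-termrecrel} at $z=0$ reads $P_{j+1}(0)=a_j(0)P_j(0)-b_j P_{j-1}(0)$, so the parenthesis collapses to $P_{j+1}(0)$ and $l_{j+1}=-\widetilde b_j/(P_j(0)P_{j+1}(0))$, completing the induction. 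Note that the regularity of ${\mathbf s}$ ensures $P_j(0)\ne 0$ for every $j$, so no denominator vanishes.

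The only real subtlety is the bookkeeping at the junction between the special formula~\eqref{s4.9} for $j=0$ (where $1/l_0$ is absent) and the generic formula~\eqref{s4.10} for $j\ge 1$; this is resolved by the observation $P_1=a_0$, which makes the base case fit into the same template as the inductive step. Once this is in place, the interlocking recursion between $d_{j+1}$ and $l_{j+1}$ closes cleanly thanks to the recurrence for $P_{j+1}(0)$.
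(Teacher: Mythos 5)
Your argument is correct. Note that the paper itself does not prove this corollary — it is quoted from \cite[Corollary 4.1]{DK15} — so there is no internal proof to compare against; what you have supplied is a complete, self-contained derivation. Your route is the natural one: you invert the forward relations \eqref{s4.9}--\eqref{s4.10} of Theorem~\ref{thm:5.1} by an interlocking induction on $d_i$ and $l_i$, with the three-term recurrence \eqref{6p.eq:3-termrecrel} evaluated at $z=0$ (i.e.\ $P_{j+1}(0)=a_j(0)P_j(0)-b_jP_{j-1}(0)$) serving as the bridge that collapses $a_j(0)P_j(0)-b_jP_{j-1}(0)$ to $P_{j+1}(0)$; the formula for $m_i$ then drops out of \eqref{s4.9}--\eqref{s4.10} by subtracting the constant term. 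All the individual computations check out: $P_1=a_0$ from the initial conditions, $d_{j+1}=1/(l_j^2d_jb_j)$ simplifying to $P_j^2(0)/\widetilde b_j$, and the telescoping of $1/l_{j+1}=-d_{j+1}a_j(0)-1/l_j$. You correctly invoke regularity ($P_j(0)\neq 0$) to keep denominators nonzero. One remark: your closed form for $l_i$ is consistent with, and via the Liouville--Ostrogradsky identity \eqref{6p.eq:gLN1} equivalent to, the difference formula $l_i=-Q_i(0)/P_i(0)+Q_{i-1}(0)/P_{i-1}(0)$ that the paper later establishes independently in Lemma~\ref{6p.lem_l_sat}(i), which is a useful cross-check on your result.
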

\begin{remark} If $\cN(\mathbf{s})=\dN$, then $\textup{deg}(m_i)=0$ for all $i\in\dN$, and $d_i=m_i$, $l_i$ can be found by~\eqref{l.2.6.we1}. 
Alternatively, $m_i$, $l_i$ can be found by~\eqref{int5x}.

If $D_i^+> 0$ for all $i\in\dN$, then $\textup{deg}(m_i)\le 1$ for all $i\in\dN$, and $m_i(z)=d_iz+m_i$, $l_i$ can be found by~\eqref{l.2.6.we1}. The parameters $d_i$, $m_i$ and $l_i$ can be also expressed in terms of $D_i$ and $D_i^+$, see~\cite[Section 5.3]{KL81}, \cite[Remark 4.1]{DK15}.
\end{remark}

\section{Truncated indefinite moment problems}\label{sec:3}


\subsection{A system of difference equations and generalized Stieltjes  polynomials}\label{sec:3.1}

Let us consider a system of difference equations associated with the continued fraction (\ref{s4.2})
\begin{equation}\label{s4.3}
    \left\{
    \begin{array}{l}
        y_{2j-1}-y_{2j-3}=-zm_{j}(z)y_{2j-2}{,}\\
        y_{2j}-y_{2j-2}=l_{j}y_{2j-1},\\
    \end{array}\right.\quad j\in\dN.
\end{equation}
If the $j$--th convergent of this continued fraction is denoted by
$\frac{u_{j}}{v_{j}}$, then $u_{j}$, $v_{j}$ can be found as solutions of the
system~\eqref{s4.3}  (see~\cite[Section~1]{Wall})
subject to the following initial conditions
\begin{equation}\label{s4.4}
    u_{-1}\equiv1, \quad u_{0}\equiv0; \qquad v_{-1}\equiv0, \quad v_{0}\equiv1.
\end{equation}
The first two convergents  of the continued fraction (\ref{s4.2}) take the form
\[
   \label{s4.5}
    \frac{u_{1}}{v_{1}}=\frac{1}{-zm_{1}(z)}=T_{M_1}[\infty], \quad
    \frac{u_{2}}{v_{2}}=\frac{l_{1}}{-zl_{1}m_{1}(z)+1}
    =T_{M_1L_1}[0].
\]
\begin{definition}(\cite{DK16})\label{def:St_Pol}
Let $\textbf{s}\in \mathcal{H}_{\kappa}^{k,reg}$. Define
polynomials $P^{+}_{i}(z)$, $Q_{i}^{+}(z)$ by
\begin{equation}\label{2p.new8.r7}
    \begin{split}
        &P^{+}_{-1}(z)\equiv0,\quad P^{+}_{0}(z)\equiv1,\qquad
        Q^{+}_{-1}(z)\equiv1,\quad Q^{+}_{0}(z)\equiv0,
        \\
        &P^{+}_{2i-1}(z)=-\frac{1}{\widetilde{b}_{i-1}}
        \begin{vmatrix}
            P_{{i}}(z) & P_{{i-1}}(z) \\
            P_{{i}}(0) & P_{{i-1}}(0)\\
        \end{vmatrix}\quad\mbox{and}
         \quad P^{+}_{2i}(z)=\frac{P_{{i}}(z)}{P_{{i}}(0)},\\&
        Q_{2i-1}^{+}(z)=\frac{1}{ \widetilde{b}_{i-1}}
        \begin{vmatrix}
            Q_{{i}}(z) & Q_{{i-1}}(z) \\
            P_{{i}}(0) & P_{{i-1}}(0)\\
        \end{vmatrix}\quad\mbox{and}
         \quad
        Q^{+}_{2i}(z)=-\frac{Q_{{i}}(z)}{P_{{i}}(0)}.
    \end{split}
\end{equation}
The polynomials $P^{+}_{i}(z)$, $Q_{i}^{+}(z)$ are called \emph{ the  generalized
Stieltjes  polynomials} corresponding to the sequence ${\mathbf s}$.
\end{definition}
As was noticed in~\cite{DK16}  the  generalized Stieltjes  polynomials coincide with the solutions $u_i$ and $v_i$ of the system~\eqref{s4.3}.
\begin{proposition}\label{prop:5.4}
Let $\textbf{s}\in \mathcal{H}_{\kappa}^{k,reg}$ and let $P^{+}_{j}(z)$ and $Q_{j}^{+}(z)$
be the  generalized  Stieltjes  polynomials defined by~\eqref{2p.new8.r7}.  Then the solutions $\{u_j\}_{j=0}^N$ and $\{v_j\}_{j=0}^N$ of the system~\eqref{s4.3}, \eqref{s4.4} take the form
\[
        u_{j}= Q_{j}^+(z),\quad
        v_{j}= P_{j}^+(z) 
        \quad(j=-1,0,\dots,N).
\]
\end{proposition}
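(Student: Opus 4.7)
The natural approach is to verify that the sequences $\{P^{+}_j\}$ and $\{Q^{+}_j\}$ defined by~\eqref{2p.new8.r7} satisfy the same difference system~\eqref{s4.3} as $\{v_j\}$ and $\{u_j\}$ together with the same initial data~\eqref{s4.4}; since solutions of such a first-order linear system are uniquely determined by their initial conditions, the two pairs must coincide. The initial data match is immediate: $P^{+}_{-1}=0=v_{-1}$, $P^{+}_{0}=1=v_{0}$ and $Q^{+}_{-1}=1=u_{-1}$, $Q^{+}_{0}=0=u_{0}$.

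For the recurrence, I would argue by induction on $i$ and establish separately the two identities
\[
P^{+}_{2i+1}-P^{+}_{2i-1}=-zm_{i+1}(z)P^{+}_{2i},\qquad
P^{+}_{2i+2}-P^{+}_{2i}=l_{i+1}P^{+}_{2i+1},
\]
and the analogous pair for $Q^{+}_j$. The ingredients are the three-term recurrence
$b_{j}P_{j-1}(z)-a_{j}(z)P_{j}(z)+P_{j+1}(z)=0$ for the Lanzcos polynomials and the representations from Corollary~\ref{l.2.6}:
\[
d_{i+1}=\frac{P_{i}^{2}(0)}{\widetilde{b}_{i}},\qquad
l_{i+1}=-\frac{\widetilde{b}_{i}}{P_{i}(0)P_{i+1}(0)},\qquad
zm_{i+1}(z)=d_{i+1}\bigl(a_{i}(z)-a_{i}(0)\bigr).
\]
The second identity is a short computation: both sides reduce to a multiple of the $2\times2$ determinant $P_{i+1}(z)P_{i}(0)-P_{i}(z)P_{i+1}(0)$, and the equality of coefficients is precisely the formula for $l_{i+1}$.

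The first identity is the main technical step. Using $P_{i+1}(z)=a_{i}(z)P_{i}(z)-b_{i}P_{i-1}(z)$ (and the same relation at $z=0$), the determinant defining $P^{+}_{2i+1}$ splits as
\[
P_{i+1}(z)P_{i}(0)-P_{i}(z)P_{i+1}(0)=\bigl(a_{i}(z)-a_{i}(0)\bigr)P_{i}(z)P_{i}(0)-b_{i}\bigl(P_{i-1}(z)P_{i}(0)-P_{i}(z)P_{i-1}(0)\bigr).
\]
Dividing by $-\widetilde{b}_{i}=-b_{i}\widetilde{b}_{i-1}$, the second piece produces exactly $P^{+}_{2i-1}$, so that the difference $P^{+}_{2i+1}-P^{+}_{2i-1}$ collapses to $-\frac{P_{i}(0)^{2}}{\widetilde{b}_{i}}\bigl(a_{i}(z)-a_{i}(0)\bigr)\cdot\frac{P_{i}(z)}{P_{i}(0)}$, which by the formulas above equals $-zm_{i+1}(z)P^{+}_{2i}$, as required. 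The verification for $Q^{+}_j$ is formally identical, since $Q_j$ satisfies the same three-term recurrence and enters~\eqref{2p.new8.r7} in exactly the same way.

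The only subtlety worth highlighting is that these manipulations implicitly use $P_{i}(0)\neq 0$ for every $i$, which is guaranteed by the regularity hypothesis $\mathbf{s}\in\mathcal{H}_{\kappa}^{k,reg}$ via Definition~2.1(i); without this the denominators appearing in the definitions of $P^{+}_{2i}$ and $Q^{+}_{2i}$ and in the formula for $l_{i+1}$ would not be defined. Once the recurrence is verified for all admissible indices, uniqueness of solutions of~\eqref{s4.3}--\eqref{s4.4} finishes the proof.
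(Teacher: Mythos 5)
Your proof is correct, and all the computations check out (including the sign bookkeeping for the $Q^{+}_{2i}=-Q_i(z)/P_i(0)$ branch, where the extra minus sign is exactly what makes $Q^{+}_{2i+1}-Q^{+}_{2i-1}=-zm_{i+1}(z)Q^{+}_{2i}$ come out right). The paper itself gives no proof of this proposition — it simply cites \cite{DK16} — so there is nothing to compare against; your direct verification via the three-term recurrence, the identities of Corollary~\ref{l.2.6}, and uniqueness of solutions of \eqref{s4.3}--\eqref{s4.4} is the natural argument, and your use of Corollary~\ref{l.2.6} introduces no circularity since that corollary is established independently from the $P$-fraction/$S$-fraction correspondence.
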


\begin{remark} The Stieltjes polynomials satisfy the following
properties
\begin{equation}\label{6p.eq:propStpol1}
        P^{+}_{2i-1}(0)=0,\quad P^{+}_{2i-2}(0)=1\quad \mbox{and}\quad
        Q^{+}_{2i-1}(0)=1.
\end{equation}

    Obviously, by Definition \ref{def:St_Pol}
\begin{equation}\label{6p.eq:propStpol2}
    P^{+}_{2i-1}(0)=-\frac{1}{\widetilde{b}_{i-1}}
        \begin{vmatrix}
            P_{{i}}(0) & P_{{i-1}}(0) \\
            P_{{i}}(0) & P_{{i-1}}(0)\\
        \end{vmatrix}=0
    \quad\mbox{and}\quad
    P^{+}_{2i-2}(0)=\frac{P_{{i}}(0)}{P_{{i}}(0)}=1.
\end{equation}
        By \eqref{6p.eq:gLN1}
\begin{equation}\label{6p.eq:propStpol2x}
    Q^{+}_{2i-1}(0)=\frac{1}{\widetilde{b}_{i-1}}
        \begin{vmatrix}
            Q_{{i}}(0) & Q_{{i-1}}(0) \\
            P_{{i}}(0) & P_{{i-1}}(0)\\
        \end{vmatrix}=\frac{Q_{{i}}(0)P_{{i-1}}(0)-Q_{{i-1}}(0)P_{{i}}(0)}{\widetilde{b}_{i-1}}=1.
\end{equation}
{
Here we used the  generalized Liouville-Ostrogradsky formula (see~\cite[(2.9)]{DK17})
\begin{equation}\label{6p.eq:gLN1}
    Q_{{i}}(z)P_{{i-1}}(z)-Q_{{i-1}}(z)P_{{i}}(z)=\widetilde{b}_{i-1},\quad i\in\dN.
\end{equation}
}
\end{remark}

\begin{lemma}\label{6p.lem:stlpol1}
    Let $P^{+}_{i}$ and $Q^{+}_{i}$ be the Stieltjes polynomials
defined by \eqref{2p.new8.r7}. Then
\begin{equation}\label{6p.relfor STpol1}
   P^{+}_{2i}(z)Q^{+}_{2i-1}(z)- Q^{+}_{2i}(z)P^{+}_{2i-1}(z)=1.
\end{equation}
\end{lemma}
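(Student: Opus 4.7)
The quantity on the left-hand side is exactly $\det W_{2i}(z)$, where $W_{2i}(z)$ is the coefficient matrix defined in \eqref{eq:ResM_2N} (with $N$ replaced by $i$). My primary plan is to invoke the factorization
\[
W_{2i}(z) = M_1(z) L_1 M_2(z) L_2 \cdots M_i(z) L_i
\]
recorded just after \eqref{eq:LFTx}, and then use multiplicativity of the determinant together with the observation that each factor is unitriangular: from the explicit forms in \eqref{eq:LMjIx}, $\det M_j(z) = \det L_j = 1$. This gives $\det W_{2i}(z) = 1$ at once.

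As an alternative that avoids appealing to the factorization, one can argue by induction on $i$. Set $\Delta_i := P^{+}_{2i}(z) Q^{+}_{2i-1}(z) - Q^{+}_{2i}(z) P^{+}_{2i-1}(z)$. By Proposition~\ref{prop:5.4}, both families $(P^{+}_{j})$ and $(Q^{+}_{j})$ satisfy the system \eqref{s4.3}, so $P^{+}_{2j} = P^{+}_{2j-2} + l_j P^{+}_{2j-1}$ and $Q^{+}_{2j} = Q^{+}_{2j-2} + l_j Q^{+}_{2j-1}$. Substituting these into $\Delta_j$, the $l_j$-terms cancel and one obtains $\Delta_j = P^{+}_{2j-2} Q^{+}_{2j-1} - Q^{+}_{2j-2} P^{+}_{2j-1}$. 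Applying the other half of \eqref{s4.3}, namely $P^{+}_{2j-1} = P^{+}_{2j-3} - zm_j(z) P^{+}_{2j-2}$ and the corresponding identity for $Q^{+}_{2j-1}$, the $zm_j(z)$-terms cancel and yield $\Delta_j = \Delta_{j-1}$. The base case $\Delta_0 = 1\cdot 1 - 0 \cdot 0 = 1$ comes from the initial conditions in \eqref{2p.new8.r7}, and induction concludes the proof.

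A third route is pure computation. Substituting the explicit expressions \eqref{2p.new8.r7} into the left-hand side, the factor $P_i(0)$ cancels between the paired terms, and what remains is $\widetilde{b}_{i-1}^{-1}\bigl[Q_i(z) P_{i-1}(z) - Q_{i-1}(z) P_i(z)\bigr]$, which equals $1$ by the generalized Liouville--Ostrogradsky identity \eqref{6p.eq:gLN1}. None of the three approaches presents any real obstacle; the identity is essentially the statement that the elementary factors $M_j$, $L_j$ of the continued-fraction transfer matrix are volume-preserving, and the matrix-determinant viewpoint makes this transparent.
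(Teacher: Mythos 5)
All three of your routes are sound, and your third one is verbatim the paper's own proof: the authors substitute the explicit formulas \eqref{2p.new8.r7} into the left-hand side, observe the cancellation of $P_{i}(0)$, and reduce the expression to $\widetilde{b}_{i-1}^{-1}\bigl(Q_{i}(z)P_{i-1}(z)-P_{i}(z)Q_{i-1}(z)\bigr)=1$ via the generalized Liouville--Ostrogradsky identity \eqref{6p.eq:gLN1}. Your primary (determinant) argument is a genuinely different and arguably cleaner route: since the left-hand side of \eqref{6p.relfor STpol1} is exactly $\det W_{2i}(z)$, and the one-step factorization $W_{2i}(z)=W_{2i-2}(z)M_{i}(z)L_{i}$ is derived in the paper directly from the recurrence \eqref{s4.3} (so there is no circularity with this lemma), multiplicativity of the determinant and $\det M_{j}=\det L_{j}=1$ give the claim immediately; this viewpoint also makes transparent \emph{why} the identity holds, namely that each elementary transfer matrix is unimodular. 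Your inductive second route is essentially the same computation unwound step by step and is likewise correct, with the base case $\Delta_{0}=1$ supplied by the initial conditions in \eqref{2p.new8.r7}. What the paper's computation buys instead is independence from Proposition~\ref{prop:5.4} and from the factorization: it needs only the closed-form definitions of the generalized Stieltjes polynomials and the identity \eqref{6p.eq:gLN1} for the Lanczos polynomials.
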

\begin{proof}
    By Definition \ref{def:St_Pol} and~\eqref{6p.eq:gLN1} we obtain
\[
    \begin{split}
                P^{+}_{2i}(z)Q^{+}_{2i-1}(z)-Q^{+}_{2i}(z)P^{+}_{2i-1}(z)=&
                \frac{P_{{i}}(z)}{\widetilde{b}_{i-1}P_{{i}}(0)}(Q_{{i}}(z)P_{{i-1}}(0)-P_{{i}}(0)Q_{{i-1}}(z))-\\&-
                \frac{Q_{{i}}(z)}{
                \widetilde{ b}_{i-1}P_{{i}}(0)}(P_{{i}}(z)P_{{i-1}}(0)-P_{{i}}(0)P_{{i-1}}(z))\\
                =&\frac{Q_{{i}}(z)P_{{i-1}}(z)-P_{{i}}(z)
                Q_{{i-1}}(z)}{\widetilde{b}_{i-1}}=1
    \end{split}
\]
 This completes the proof. \end{proof}

\begin{lemma}\label{6p.lem_l_sat}
    Let  $\textbf{s}\in \mathcal{H}_{\kappa}^{k,reg}$ and let $P_i(z)$  and $Q_i(z)$ ($i\in\dZ_+$)  be Lanczos polynomials of the first and second kind and let ${l}_j$ and $m_j(z)$ ($j\in\mathbb{N}$) be parameters  of the generalized $\mathbf{S}-$fraction (\ref{s4.2}).
    Then:
\begin{enumerate}
  \item [(i)]  The constants $l_i$ can be calculated by
\begin{equation}\label{6p.eq:lem_l_1}
    l_i=-\frac{Q_i(0)}{P_i(0)}+\frac{Q_{i-1}(0)}{P_{i-1}(0)},\quad i\in\dN;
\end{equation}
  \item [(ii)] For every $N\in\dN$ the following formulas hold
 {
  \begin{equation}\label{6p.eq:cor.l1x}
    \sum_{i=1}^{N}l_i=-\frac{Q_{N}(0)}{P_{N}(0)},\qquad
            \sum_{i=1}^{N}
            d_{i}=\sum_{i=0}^{N-1}|P^2_{i}(0)|\widetilde{b}_i^{-1};
        \end{equation}
}
      \begin{equation}\label{6p.eq:cor.l1}
    \sum_{i=1}^{N}m_{i}(0)=-P^{+^{'}}_{2N-1}(0).
    \end{equation}
\end{enumerate}
\end{lemma}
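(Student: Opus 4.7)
The plan is to derive all three assertions from identities already in the excerpt — the formulas in Corollary \ref{l.2.6}, the generalized Liouville--Ostrogradsky identity \eqref{6p.eq:gLN1}, and the difference system \eqref{s4.3} for the generalized Stieltjes polynomials. Each part is essentially a short manipulation; the main bookkeeping is keeping track of indices and initial conditions.

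For \textbf{(i)}, I would start with the formula $l_i=-\widetilde b_{i-1}/(P_{i-1}(0)P_i(0))$ from Corollary \ref{l.2.6}, substitute the Liouville--Ostrogradsky value $\widetilde b_{i-1}=Q_i(0)P_{i-1}(0)-Q_{i-1}(0)P_i(0)$ (obtained by setting $z=0$ in \eqref{6p.eq:gLN1}), and split the resulting fraction into two terms. This gives exactly \eqref{6p.eq:lem_l_1}. The regularity assumption $P_j(0)\neq 0$ guarantees all denominators are nonzero.

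For \textbf{(ii)}, the first identity in \eqref{6p.eq:cor.l1x} follows by telescoping \eqref{6p.eq:lem_l_1}, using the initial conditions $P_0\equiv 1$ and $Q_0\equiv 0$ from \eqref{system1.1}, which yield $Q_0(0)/P_0(0)=0$. The second identity in \eqref{6p.eq:cor.l1x} is immediate from $d_i=P_{i-1}^2(0)/\widetilde b_{i-1}$ (Corollary \ref{l.2.6}) after reindexing $i\mapsto i-1$; the absolute value bars merely record that $P_{i-1}^2(0)\ge 0$ is unsigned while $\widetilde b_{i-1}$ may carry a sign.

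For \textbf{(iii)}, which is the least routine of the three, I would work directly with the difference system \eqref{s4.3} applied to $y_j=P^+_j$, justified by Proposition \ref{prop:5.4}. Differentiating the first equation of \eqref{s4.3},
\begin{equation*}
P^{+\prime}_{2j-1}(z)-P^{+\prime}_{2j-3}(z)=-m_j(z)P^+_{2j-2}(z)-zm_j'(z)P^+_{2j-2}(z)-zm_j(z)P^{+\prime}_{2j-2}(z),
\end{equation*}
and evaluating at $z=0$, the last two terms vanish and the first uses $P^+_{2j-2}(0)=1$ from \eqref{6p.eq:propStpol2}, giving
\begin{equation*}
P^{+\prime}_{2j-1}(0)-P^{+\prime}_{2j-3}(0)=-m_j(0).
\end{equation*}
Summing over $j=1,\ldots,N$ telescopes the left-hand side to $P^{+\prime}_{2N-1}(0)-P^{+\prime}_{-1}(0)=P^{+\prime}_{2N-1}(0)$, since $P^+_{-1}\equiv 0$ by \eqref{2p.new8.r7}. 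This yields \eqref{6p.eq:cor.l1}. I expect no real obstacle here; the only point requiring care is that the recurrence \eqref{s4.3} holds for polynomials $m_j(z)$ of arbitrary degree (not just constants), so the differentiation step must be written with the product rule explicitly, as above, to see the cancellation at $z=0$.
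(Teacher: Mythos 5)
Your proof is correct, and parts (ii) and (iii) coincide with the paper's argument: the telescoping of \eqref{6p.eq:lem_l_1} together with $Q_0(0)/P_0(0)=0$, the reindexing of $d_i=P_{i-1}^2(0)/\widetilde b_{i-1}$, and the differentiation of the first equation of \eqref{s4.3} at $z=0$ (where the product-rule terms carrying a factor $z$ vanish and $P^+_{2j-2}(0)=1$ is used) are exactly the steps in the paper. The only divergence is in part (i): the paper substitutes $y_j=Q^+_j$ and $z=0$ into the system \eqref{s4.3} to get $Q^+_{2i}(0)-Q^+_{2i-2}(0)=l_iQ^+_{2i-1}(0)$ and then shows $Q^+_{2i-1}(0)=1$ via the Liouville--Ostrogradsky identity, whereas you start from the closed formula $l_i=-\widetilde b_{i-1}/(P_{i-1}(0)P_i(0))$ of Corollary~\ref{l.2.6} and substitute the value of $\widetilde b_{i-1}$ from \eqref{6p.eq:gLN1}. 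Both are one-line computations resting on the same identity \eqref{6p.eq:gLN1} and on the same expressions \eqref{2p.new8.r7} for the generalized Stieltjes polynomials, so the difference is essentially one of which previously quoted result carries the weight (Corollary~\ref{l.2.6} versus Proposition~\ref{prop:5.4}); your route has the minor advantage of not needing the difference system at all for (i).
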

\begin{proof}
1)  Let $Q^{+}_{i}(z)$   be Stieltjes
    polynomials defined by~\eqref{2p.new8.r7}.
    Substituting in \eqref{s4.3} $y_j=Q_j^+$ and $z=0$, we obtain
\[
    Q^{+}_{2i}(0)-Q^{+}_{2i-2}(0)=l_{i}Q^{+}_{2i-1}(0).
\]
By Definition \ref{def:St_Pol} and by the generalized Liouville-Ostrogradsky
formula~\eqref{6p.eq:gLN1}
\[
    Q^{+}_{2i-1}(0)=\frac{1}{\wt b_{i-1}}(Q_{{i}}(0)P_{{i-1}}(0)-Q_{{i-1}}(0)P_{{i}}(0)=1.
\]
This implies~\eqref{6p.eq:lem_l_1}.

{
2) Summing the equalities~\eqref{6p.eq:lem_l_1} for $i=1,\dots, N$ one obtains the first equality in~\eqref{6p.eq:cor.l1x}.
The second equality in~\eqref{6p.eq:cor.l1x} is implied by the relation (see     \cite[Corollary 4.1]{DK15})
        \begin{equation}\label{6p.P(0)_2}
            d_{i+1}=|P^2_{i}(0)|\widetilde{b}_i^{-1}.
        \end{equation}

3)    Differentiating the first equality in~\eqref{s4.3} one  obtains
    \[
        P^{+^{'}}_{2i-1}(z)=-m_{i}(z)P^{+}_{2i-2}(z)-z(m_{i}(z)P^{+}_{2i-2}(z))'+P^{+^{'}}_{2i-3}(z).
    \]
    Substituting $z=0$ and using the equality $P_{2i-2}^+(0)=1$, one obtains
    \[
        m_{i}(0)=
        \,P^{+^{'}}_{2i-3}(0)-P^{+^{'}}_{2i-1}(0).
    \]
}
   Hence
    \[
       \sum_{i=1}^{N}m_{i}(0)=P^{+^{'}}_{-1}(0)-P^{+^{'}}_{1}(0)
    +\ldots+P^{+^{'}}_{2N-3}(0)-P^{+^{'}}_{2N-1}(0)
    =-P^{+^{'}}_{2N-1}(0).
    \]
  This  proves \eqref{6p.eq:cor.l1}.
\end{proof}

\subsection{The class $\mathcal{U}_\kappa^k(J)$ and linear fractional transformations}
Let  $J$ and $\cZ$ be the  $2\times 2$
 matrices
\[
J=\begin{pmatrix}
   0  &  -i \\
   i  &  0\\
\end{pmatrix}\quad\mbox{and}\quad
\cZ=\begin{pmatrix}
   z &  0 \\
   0  &  1 \\
\end{pmatrix}.
\]
{
\begin{definition}\label{def:Jkk}
Let  $W(z)$ be a $2\times 2$ matrix valued function meromorphic in $\mathbb{C}_+$ and let ${\gh}_W^+$ be the domain of holomorphy of $W$ in $\mathbb{C}_+$, $\kappa\in\dZ_+$. Then $W(z)$ is called a {\it generalized $J$-inner} matrix valued function from the class ${\mathcal{U}}_\kappa(J)$,
if:

\begin{enumerate}
\item[(i)]
the kernel
\begin{equation}\label{kerK}
{\mathsf K}_\omega^W(z)=
\frac{J-W(z)JW(\omega)^*}{-i(z-\bar \omega)}
\end{equation}
has $\kappa$ negative squares in ${\gh}_W^+\times{\gh}_W^+$;
\item[(ii)]
$J-W(\mu)JW(\mu)^*=0$ for a.e.  $\mu\in\mathbb{R}$.
\end{enumerate}
A matrix valued function $W\in \cU_\kappa(J)$ is said to belong to the class $\cU_\kappa^k(J)$, $\kappa,k\in\dZ_+$, if
\begin{equation}\label{eq:Ukk}
  \cZ W\cZ^{-1}\in \cU_k(J).
\end{equation}
\end{definition}

Consider the linear fractional transformation
\begin{equation}\label{eq:0.9}
    T_W[\tau]=(w_{11}\tau(z)+w_{12})(w_{21}\tau(z)+w_{22})^{-1}
\end{equation}
associated with the matrix valued function $W(z)=(w_{i,j}(z))_{i,j=1}^2$. The linear fractional transformation associated with the product $W_1W_2$ of two matrix valued functions $W_1(z)$ and $W_2(z)$, coincides with the composition $T_{W_1}\circ T_{W_2}$.
}

The following statement is an easy corollary of Definition~\ref{def:Jkk}.
\begin{lemma}\label{lem:W}
Let $W\in{\mathcal{U}}_{\kappa_1}^{k_1}(J)$ and $\tau\in{\mathbf N}_{\kappa_2}^{k_2}$, $\kappa_1,\kappa_2,k_1,k_2\in\dZ_+$. Then $T_W[\tau]\in{\mathbf N}_{\kappa'}^{k'}$, where $\kappa'\le{\kappa_1+\kappa_2}$, $k'\le k_1+k_2$.
\end{lemma}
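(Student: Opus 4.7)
The plan is to reduce the statement to its Nevanlinna (non-Stieltjes) version, which is the classical Krein--Langer count of negative squares for linear-fractional transformations of generalized Nevanlinna functions by generalized $J$-inner matrix functions. To that end I would carry out the following three steps.

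\emph{Step 1 (Nevanlinna part).} First I would establish that $T_W[\tau]\in\mathbf{N}_{\kappa'}$ with $\kappa'\le\kappa_1+\kappa_2$ whenever $W\in\cU_{\kappa_1}(J)$ and $\tau\in\mathbf{N}_{\kappa_2}$. The well-definedness of the denominator $v(z):=w_{21}(z)\tau(z)+w_{22}(z)$ (i.e.\ $v\not\equiv 0$) follows from the $J$-unitarity relation on $\dR$ in Definition~\ref{def:Jkk}(ii) together with the fact that $\tau\in\mathbf{N}_{\kappa_2}$ takes non-real values in $\dC_+$. The key identity is the block-matrix factorization of the Nevanlinna kernel of $T_W[\tau]$: writing $e(z):=(\tau(z),1)^\top$ so that $W(z)e(z)=v(z)(T_W[\tau](z),1)^\top$, a direct computation yields
\[
\frac{T_W[\tau](z)-\overline{T_W[\tau](\omega)}}{z-\bar\omega}
=\frac{1}{v(z)\overline{v(\omega)}}\Bigl(\,e(\omega)^{*}{\mathsf K}_\omega^W(z)\,e(z)\cdot(-i(z-\bar\omega))+(w_{21}\tau+w_{22})(z)\,\overline{(w_{21}\tau+w_{22})(\omega)}\cdot N_\tau(z,\omega)\,\Bigr)
\]
up to a harmless sign/normalization, where $N_\tau$ is the Nevanlinna kernel of $\tau$. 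The right-hand side is a sum of two Hermitian kernels that have at most $\kappa_1$ and $\kappa_2$ negative squares respectively, so the sum has at most $\kappa_1+\kappa_2$. By Proposition~\ref{prop:2.1}(v) (applied via the general Krein--Langer characterization of $\mathbf{N}_{\kappa}$ through its kernel), this gives $T_W[\tau]\in\mathbf{N}_{\kappa_1+\kappa_2}$.

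\emph{Step 2 (Reducing the Stieltjes condition to Step~1).} To control $zT_W[\tau]$, I use the conjugation built into Definition~\ref{def:Jkk}. Set $\wt W(z):=\cZ(z)W(z)\cZ(z)^{-1}$ and $\wt\tau(z):=z\tau(z)$. By hypothesis $\wt W\in\cU_{k_1}(J)$ and, since $\tau\in\mathbf{N}_{\kappa_2}^{k_2}$, also $\wt\tau\in\mathbf{N}_{k_2}$. A direct $2\times 2$ computation (multiplying out $\cZ W\cZ^{-1}$) shows
\[
T_{\wt W}[\wt\tau](z)=\frac{w_{11}(z)\cdot z\tau(z)+z\,w_{12}(z)}{(w_{21}(z)/z)\cdot z\tau(z)+w_{22}(z)}=z\,T_W[\tau](z).
\]
Applying Step~1 to the pair $(\wt W,\wt\tau)$ of classes $\cU_{k_1}(J)$ and $\mathbf{N}_{k_2}$, I conclude that $zT_W[\tau]\in\mathbf{N}_{k'}$ with $k'\le k_1+k_2$.

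\emph{Step 3 (Combining).} Combining the two conclusions, $T_W[\tau]\in\mathbf{N}_{\kappa'}$ with $\kappa'\le\kappa_1+\kappa_2$ (from Step~1) and $z\,T_W[\tau]\in\mathbf{N}_{k'}$ with $k'\le k_1+k_2$ (from Step~2), which by the very definition of $\mathbf{N}_{\kappa'}^{k'}$ yields $T_W[\tau]\in\mathbf{N}_{\kappa'}^{k'}$ with the claimed bounds.

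The main obstacle I expect is Step~1: the careful bookkeeping of the $J$-kernel factorization and then extracting the negative square count for the sum of two Hermitian kernels. Once that identity is in place, Step~2 is an algebraic reduction using the $\cZ$-conjugation that is precisely what Definition~\ref{def:Jkk}(ii) was set up to handle, and Step~3 is immediate.
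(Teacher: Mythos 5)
Your proof is correct and follows essentially the same route as the paper: the paper disposes of the Nevanlinna part by citing \cite[Lemma 3.4]{DD09} and then uses exactly your $\cZ$-conjugation identity $z\,T_W[\tau]=T_{\cZ W\cZ^{-1}}[z\tau]$ together with $\cZ W\cZ^{-1}\in\cU_{k_1}(J)$, $z\tau\in\mathbf{N}_{k_2}$ for the Stieltjes part. Your Step~1 merely fills in the kernel-decomposition argument the paper delegates to that reference; the only point to watch there is that the kernel arising naturally in the factorization is $\bigl(W(\omega)^*JW(z)-J\bigr)/\bigl(i(z-\bar\omega)\bigr)$ rather than ${\mathsf K}_\omega^W$ itself, but the two have the same number of negative squares for generalized $J$-inner functions, so your "harmless normalization" caveat is justified.
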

\begin{proof}
  The proof of the inclusion $T_W[\tau]\in{\mathbf N}_{\kappa'}$ for some $\kappa'\le{\kappa_1+\kappa_2}$ is similar to that in~\cite[Lemma 3.4]{DD09}.

  Let us denote $f(z)=T_W(z)[\tau(z)]$. Then
  \[
  zf(z)=T_{\cZ W(z)\cZ^{-1}}[z\tau(z)].
  \]
  Since $\cZ W\cZ^{-1}\in \cU_{k_1}$ and $z\tau\in {\mathbf N}_{k_2}$ then $f\in {\mathbf N}_{k'}$ for some
  $k'\le k_1+k_2$.
  This proves that $T_W[\tau]\in{\mathbf N}_{\kappa'}^{k'}$ for some $\kappa'\le{\kappa_1+\kappa_2}$, $k'\le k_1+k_2$.
\end{proof}

In the present paper two special types of matrix valued functions from $\cU_\kappa^k(J)$ play an important role (see~\cite[Lemma~2.11, Lemma~2.12]{DK17}).
\begin{lemma}\label{lem:W_my}
Let $m(z)$ be a real polynomial such that $\kappa_-(zm(z))=\kappa_1$, $\kappa_-(m(z))=k_1$, let $M(z)$ be a $2\times 2$ matrix valued function
\begin{equation}\label{eq:W_m}
    M(z)=\begin{pmatrix}
   1  &  0 \\
   -z m(z)  &  1\\
\end{pmatrix}
\end{equation}
and let $\tau$ be a meromorphic function, such that
\begin{equation}\label{eq:Nev_tau}
  \tau(z)^{-1}=o(z)\mbox{ as }z\widehat{\rightarrow}\infty.
\end{equation}
Then $M\in\cU_{\kappa_1}^{k_1}(J)$ and the following equivalences hold:
\begin{equation}\label{eq:w_m_k}
    \tau\in{\mathbf N}_{\kappa_2}\Longleftrightarrow T_M[\tau]\in {\mathbf N}_{\kappa_1+\kappa_2},
\end{equation}
\begin{equation}\label{eq:w_m_kk}
    \tau\in{\mathbf N}_{\kappa_2}^{k_2}\Longleftrightarrow  T_M[\tau]\in {\mathbf N}_{\kappa_1+\kappa_2}^{k_1+k_2}.
\end{equation}
\end{lemma}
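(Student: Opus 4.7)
The overall plan is to verify $M\in\cU_{\kappa_1}^{k_1}(J)$ by a direct computation of the reproducing kernel~\eqref{kerK}, and then to extract both equivalences from the single algebraic identity
\[
-\frac{1}{T_M[\tau](z)}=zm(z)-\frac{1}{\tau(z)},
\]
which is immediate from the definition $T_M[\tau]=\tau/(1-zm(z)\tau)$. Lemma~\ref{lem:W} only furnishes the inequalities $\kappa'\le\kappa_1+\kappa_2$ and $k'\le k_1+k_2$; the entire content of the present lemma is the \emph{sharpness} of these bounds together with the reverse implications, both of which rest on the sharp clause of Proposition~\ref{prop:2.1}(iv) that is unlocked by the non-tangential decay $\tau^{-1}=o(z)$.

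For the $J$-inner property, I would write $M(z)=I+E(z)$ with $E(z)=\bigl(\begin{smallmatrix}0&0\\-zm(z)&0\end{smallmatrix}\bigr)$ and expand $J-M(z)JM(\omega)^*=-E(z)J-JE(\omega)^*-E(z)JE(\omega)^*$. The last product vanishes, and the remaining contributions sit only in the $(2,2)$ entry, yielding
\[
{\mathsf K}_\omega^M(z)=\begin{pmatrix}0&0\\ 0&\dfrac{zm(z)-\overline{\omega m(\omega)}}{z-\bar\omega}\end{pmatrix}.
\]
The scalar entry is the Kre\u{\i}n--Langer difference-quotient kernel of the polynomial $zm(z)\in\mathbf{N}_{\kappa_1}$ and therefore has exactly $\kappa_1$ negative squares; condition (ii) of Definition~\ref{def:Jkk} holds identically on $\dR$ because $m$ is real. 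Performing the identical computation on $\cZ M\cZ^{-1}=\bigl(\begin{smallmatrix}1&0\\ -m(z)&1\end{smallmatrix}\bigr)$ produces the corresponding kernel of $m(z)\in\mathbf{N}_{k_1}$, giving $\cZ M\cZ^{-1}\in\cU_{k_1}(J)$ and hence $M\in\cU_{\kappa_1}^{k_1}(J)$.

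For the direct direction of~\eqref{eq:w_m_k}, suppose $\tau\in\mathbf{N}_{\kappa_2}$. Proposition~\ref{prop:2.1}(i) yields $-1/\tau\in\mathbf{N}_{\kappa_2}$, and the hypothesis $\tau^{-1}=o(z)$ translates to $(-1/\tau)(iy)=o(y)$; since $zm(z)$ is a polynomial in $\mathbf{N}_{\kappa_1}$, the sharp clause of Proposition~\ref{prop:2.1}(iv) delivers $-1/T_M[\tau]=zm(z)+(-1/\tau)\in\mathbf{N}_{\kappa_1+\kappa_2}$ with \emph{equality} of indices, so $T_M[\tau]\in\mathbf{N}_{\kappa_1+\kappa_2}$. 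For the converse, rearrange to $-1/\tau=(-1/T_M[\tau])+(-zm(z))$; both summands are generalized Nevanlinna functions, so the weaker form of Proposition~\ref{prop:2.1}(iv) gives $-1/\tau\in\mathbf{N}_{\kappa'}$ for some $\kappa'\in\dZ_+$. Applying the direct direction just proved with this $\kappa'$ produces $-1/T_M[\tau]\in\mathbf{N}_{\kappa_1+\kappa'}$ \emph{exactly}, and comparison with the assumption $-1/T_M[\tau]\in\mathbf{N}_{\kappa_1+\kappa_2}$ forces $\kappa'=\kappa_2$.

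The $k$-equivalence~\eqref{eq:w_m_kk} reduces to an identical argument through the observation $zT_M[\tau]=T_{\cZ M\cZ^{-1}}[z\tau]$, together with the elementary fact that $\tau^{-1}(iy)=o(y)$ implies $(z\tau)^{-1}(iy)=o(y)$. Replaying the previous paragraph with $\cZ M\cZ^{-1}$, $z\tau$, and the polynomial $m(z)\in\mathbf{N}_{k_1}$ in place of $M$, $\tau$, and $zm(z)$ yields $z\tau\in\mathbf{N}_{k_2}\Leftrightarrow zT_M[\tau]\in\mathbf{N}_{k_1+k_2}$, which combined with~\eqref{eq:w_m_k} is~\eqref{eq:w_m_kk}. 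The main obstacle throughout is the deployment of the sharp clause of Proposition~\ref{prop:2.1}(iv): its additive index bound is in general only an inequality, and only the combination of the polynomial character of one summand with the $o(y)$ decay of the other forces equality; this equality is what simultaneously sharpens the $\Rightarrow$ direction of each equivalence to the correct index and makes the $\Leftarrow$ direction possible through the degree-counting comparison above.
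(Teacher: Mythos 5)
The paper does not actually prove this lemma --- it is quoted from \cite[Lemmas 2.11, 2.12]{DK17} --- so there is no internal proof to compare against; but your argument is correct and self-contained, and it runs along exactly the lines the paper itself uses elsewhere (the chain of implications following \eqref{eq:1/f_2} in Section~\ref{sec:2.2} is precisely the identity $-1/T_M[\tau]=zm(z)-1/\tau$ combined with Proposition~\ref{prop:2.1}). Your kernel computation checks out: $E(z)JE(\omega)^*=0$, the kernel \eqref{kerK} collapses to the scalar Nevanlinna kernel of $zm(z)$ sitting in the $(2,2)$ entry (whose number of negative squares is $\kappa_-(zm)=\kappa_1$ by definition, and which vanishes on $\dR$ because $m$ is real, giving condition (ii) of Definition~\ref{def:Jkk}), and conjugation by $\cZ$ replaces $zm(z)$ by $m(z)$. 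The two points that carry the real weight are both handled correctly: the forward implications require the sharp clause of Proposition~\ref{prop:2.1}(iv), which is available precisely because $\tau^{-1}=o(z)$ nontangentially (hence $(-1/\tau)(iy)=o(y)$) and $zm$ is a polynomial; and the converse implications rest on the mutual disjointness of the classes $\mathbf{N}_{\kappa}$ (each function has a well-defined number of negative squares), so that $T_M[\tau]\in\mathbf{N}_{\kappa_1+\kappa'}\cap\mathbf{N}_{\kappa_1+\kappa_2}$ forces $\kappa'=\kappa_2$. The reduction of \eqref{eq:w_m_kk} to \eqref{eq:w_m_k} via $zT_M[\tau]=T_{\cZ M\cZ^{-1}}[z\tau]$ and the observation that $\tau^{-1}(iy)=o(y)$ implies $(z\tau)^{-1}(iy)=o(1)$ is also sound. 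No gaps.
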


\begin{lemma}\label{lem:W_l}
Let $l(z)$ be a real polynomial such that $\kappa_-(l(z))=\kappa_1$,
$\kappa_-(zl(z))=k_1$, let $L(z)$ be the $2\times 2$ matrix valued
function
\begin{equation}\label{eq:W_lx}
    L(z)=\begin{pmatrix}
   1  &  l(z) \\
   0  &  1\\
\end{pmatrix}
\end{equation}
and let $\phi$ be a meromorphic function, such that
\begin{equation}\label{eq:Nev_phi}
  \phi(z)=o(1)\mbox{ as }z\widehat{\rightarrow}\infty.
\end{equation}
Then $L\in\cU_{\kappa_1}^{k_1}(J)$ and the following equivalences hold:
\begin{equation}\label{eq:w_l_k}
    \phi\in{\mathbf N}_{\kappa_2}\Longleftrightarrow T_L[\phi]\in {\mathbf N}_{\kappa_1+\kappa_2},
\end{equation}
\begin{equation}\label{eq:w_l_kk}
    \phi\in{\mathbf N}_{\kappa_2}^{k_2}\Longleftrightarrow T_L[\phi]\in {\mathbf N}_{\kappa_1+\kappa_2}^{k_1+k_2}.
\end{equation}
\end{lemma}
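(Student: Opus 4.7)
The plan is to mirror the proof of Lemma~\ref{lem:W_my} with $M$ replaced by the upper-triangular matrix $L$, using the identity $T_L[\phi]=\phi+l(z)$ as the workhorse.

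First I would verify that $L\in\cU_{\kappa_1}^{k_1}(J)$ by a direct computation. Because $l$ has real coefficients, $L(\omega)^{*}=\left(\begin{smallmatrix} 1 & 0 \\ l(\bar\omega) & 1\end{smallmatrix}\right)$, and multiplying out collapses the kernel to
\[
\mathsf{K}_\omega^L(z)=\begin{pmatrix} \dfrac{l(z)-l(\bar\omega)}{z-\bar\omega} & 0 \\ 0 & 0 \end{pmatrix},
\]
so the number of negative squares of $\mathsf{K}_\omega^L$ on $\gh_L^+\times\gh_L^+$ equals that of the Nevanlinna kernel of $l$, namely $\kappa_-(l)=\kappa_1$. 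Taking $\omega=\bar\mu$ with $\mu\in\dR$ makes the entry vanish, giving condition~(ii) of Definition~\ref{def:Jkk}. A one-line calculation shows $\cZ L\cZ^{-1}$ is again of the same upper-triangular shape but with $l(z)$ replaced by $zl(z)$; the identical argument gives $\cZ L\cZ^{-1}\in\cU_{k_1}(J)$, hence $L\in\cU_{\kappa_1}^{k_1}(J)$.

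Next I would prove~\eqref{eq:w_l_k}. The forward direction is immediate from Proposition~\ref{prop:2.1}(iv): $l$ is a real polynomial belonging to $\mathbf{N}_{\kappa_1}$ by~\eqref{3p.kappaP}, and the hypothesis $\phi(z)=o(1)$ as $z\widehat{\rightarrow}\infty$ implies $\phi(iy)=o(y)$, so the indices add to give $\phi+l=T_L[\phi]\in\mathbf{N}_{\kappa_1+\kappa_2}$. For the converse I would use that $\phi=T_L[\phi]-l$ expresses $\phi$ as the sum of two generalized Nevanlinna functions (both $T_L[\phi]$ and $-l$ lie in some class $\mathbf{N}_\kappa$), so Proposition~\ref{prop:2.1}(iv) places $\phi$ in some $\mathbf{N}_{\kappa'}$ with $\kappa'\in\dZ_+$. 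Applying the already-proved forward implication to this $\kappa'$ yields $T_L[\phi]=\phi+l\in\mathbf{N}_{\kappa_1+\kappa'}$; since the classes $\mathbf{N}_\kappa$ are pairwise disjoint and $T_L[\phi]\in\mathbf{N}_{\kappa_1+\kappa_2}$ by assumption, I must have $\kappa'=\kappa_2$.

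Equivalence~\eqref{eq:w_l_kk} I would obtain by applying the same argument to the pair $(z\phi,\,zl)$: by hypothesis $\kappa_-(zl)=k_1$, and $\phi(iy)=o(1)$ also yields $z\phi(iy)=o(y)$, so Proposition~\ref{prop:2.1}(iv) gives $z\phi\in\mathbf{N}_{k_2}\Longleftrightarrow z\phi+zl=zT_L[\phi]\in\mathbf{N}_{k_1+k_2}$; combining with~\eqref{eq:w_l_k} proves~\eqref{eq:w_l_kk}. The only genuinely delicate point, which I anticipate as the main obstacle, is the converse direction of~\eqref{eq:w_l_k}: one has to argue that $\phi$ itself is a generalized Nevanlinna function of \emph{some} class before invoking disjointness to pin down its index. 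This is precisely where the asymptotic hypothesis~\eqref{eq:Nev_phi} earns its keep, both by making the additivity clause of Proposition~\ref{prop:2.1}(iv) applicable with $f_1=l$ a polynomial and by ruling out growth at infinity that would inflate the index beyond $\kappa_2$.
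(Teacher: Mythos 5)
Your proof is correct and follows the standard route (the paper itself only cites \cite{DK17} for this statement): the kernel computation collapsing ${\mathsf K}^L_\omega$ to the Nevanlinna kernel of $l$ in the $(1,1)$ corner, the identity $\cZ L\cZ^{-1}=\bigl(\begin{smallmatrix}1 & zl(z)\\ 0 & 1\end{smallmatrix}\bigr)$, and the equality clause of Proposition~\ref{prop:2.1}(iv), which applies precisely because $\phi(iy)=o(1)=o(y)$ and $l$, $zl$ are polynomials with $\kappa_-(l)=\kappa_1$, $\kappa_-(zl)=k_1$. Your treatment of the converse --- first locating $\phi=(\phi+l)-l$ in some class $\mathbf{N}_{\kappa'}$ via the additivity estimate, then forcing $\kappa'=\kappa_2$ by the forward implication together with the pairwise disjointness of the classes $\mathbf{N}_\kappa$, and repeating the scheme for the pair $(z\phi,zl)$ --- correctly closes both equivalences.
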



\subsection{Truncated  indefinite moment problem and resolvent matrices} \label{sec:3.2}
As was mentioned in Theorem~\ref{thm:4.2} the matrix valued function $W_{2N}(z)$ defined by~\eqref{eq:ResM_2N} is the resolvent matrix  of a truncated even indefinite moment problem $MP_{\kappa}^k(\textbf{s},  2N-1)$ with $\kappa\ge \kappa_N$ and $k\ge k_N^+$, where $\kappa_N$ and $k_N^+$ are defined by~\eqref{eq:Gen_Solv+x}.
The resolvent matrix $W_{2N}(z)$  provides an example of generalized $J-$inner matrix valued function from the class $\cU_{\kappa_N}^{k_N^+}(J)$.

\begin{theorem}\label{thm:4.2A}{\rm (\cite{DK17})}
Let ${\mathbf s}=\{s_i\}_{i=0}^{\infty}\in\mathcal{H}_{\kappa}^{k,reg}$, let   $P_{{j}}^+(z)$ and
$Q_{{j}}^+(z)$  be generalized Stieltjes polynomials  of the first and second kind, let $m_i(z)$, $l_i$ be defined by~\eqref{eq:ml_j} and~\eqref{eq:l_j0} and let $W_{2N}(z)$, $M_i(z)$ and $L_i$  be given by~\eqref{eq:ResM_2N} and~\eqref{eq:LMjIx}.
 Then:
\begin{enumerate}
  \item [(i)] The matrix valued function  $W_{2N}(z)$ admits the factorization
  \begin{equation}\label{eq:W1j}
    W_{2N}(z)=M_1(z)L_1\dots M_N(z)L_N;
\end{equation}
\item [(ii)] $W_{2N}\in\cU_{\kappa_N}^{k_N^+}(J)$.
\end{enumerate}
\end{theorem}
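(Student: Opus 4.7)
The plan is to handle parts~(i) and~(ii) in sequence, with~(i) providing the algebraic scaffolding for~(ii). For part~(i) I would argue by induction on $N$. The base case $N=1$ is a direct check: plugging the initial conditions \eqref{system1.1Ix} into \eqref{s4.3} with $j=1$ yields $Q_1^+(z)=1$, $Q_2^+(z)=l_1$, $P_1^+(z)=-zm_1(z)$, $P_2^+(z)=1-l_1zm_1(z)$, matching $M_1(z)L_1$ entry by entry. For the inductive step, the pair of relations in \eqref{s4.3} at level $j+1$ can be rewritten in the compact matrix form
\[
(y_{2j+1},\,y_{2j+2}) = (y_{2j-1},\,y_{2j})\,M_{j+1}(z)\,L_{j+1},
\]
and applying this identity separately to the $Q^+$-row and the $P^+$-row of $W_{2j}(z)$ yields $W_{2(j+1)}(z)=W_{2j}(z)M_{j+1}(z)L_{j+1}$, closing the induction.

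For part~(ii) I would first catalogue the indices of the individual factors appearing in \eqref{eq:W1j}. Lemma~\ref{lem:W_my} gives $M_j\in\cU_{\kappa_-(zm_j)}^{\kappa_-(m_j)}(J)$; and since $l_j\in\dR\setminus\{0\}$ is a nonzero real constant one has $\kappa_-(l_j)=0$, so Lemma~\ref{lem:W_l} gives $L_j\in\cU_0^{\kappa_-(zl_j)}(J)$. Next I would invoke the general multiplicativity $\cU_{\kappa_1}^{k_1}(J)\cdot\cU_{\kappa_2}^{k_2}(J)\subseteq\cU_{\kappa_1+\kappa_2}^{k_1+k_2}(J)$, which follows by decomposing
\[
{\mathsf K}_\omega^{W_1W_2}(z) = {\mathsf K}_\omega^{W_1}(z) + W_1(z)\,{\mathsf K}_\omega^{W_2}(z)\,W_1(\omega)^*
\]
(together with the analogous identity for the $\cZ$-conjugate kernel that controls the upper index), to obtain the a priori bounds $W_{2N}\in\cU_{\kappa'}^{k'}(J)$ with $\kappa'\le\sum_{j=1}^N\kappa_-(zm_j)$ and $k'\le\sum_{j=1}^N\bigl(\kappa_-(m_j)+\kappa_-(zl_j)\bigr)$.

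The main obstacle will be promoting these inequalities to the advertised equalities $\kappa'=\kappa_N$ and $k'=k_N^+$, i.e., ruling out index cancellation in the product. My plan is first to evaluate the two sums by telescoping: one step of the Schur algorithm of Section~\ref{sec:2.2}, combined with Lemma~\ref{lem:2.4}, shows that at level $j$ the indices of the induced moment problem drop by exactly $\kappa_-(zm_j)$ and $\kappa_-(m_j)+\kappa_-(zl_j)$ respectively, so summing over $j=1,\dots,N$ identifies the right-hand sides with $\kappa_N$ and $k_N^+$. To transfer this equality back to $W_{2N}$ I would exploit the rigidity of the bi-conditional equivalences \eqref{eq:w_m_kk} and \eqref{eq:w_l_kk}: by Theorem~\ref{thm:4.2} one can pick $f_N\in\mathbf{N}_0^0$ with $f_N(z)=o(1)$ as $z\widehat{\rightarrow}\infty$, and applying \eqref{eq:w_m_kk} and \eqref{eq:w_l_kk} in succession along the factorization \eqref{eq:W1j} forces $T_{W_{2N}}[f_N]\in\mathbf{N}_{\kappa_N}^{k_N^+}$ exactly. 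Confronting this with the upper bound $T_{W_{2N}}[f_N]\in\mathbf{N}_{\kappa'}^{k'}$ supplied by Lemma~\ref{lem:W} pins down $\kappa'=\kappa_N$ and $k'=k_N^+$, completing the argument.
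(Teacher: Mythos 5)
Your proposal is correct and follows essentially the same route as the paper: part (i) by iterating the matrix form of the recurrence \eqref{s4.3}, and part (ii) by combining the factor-wise indices from Lemmata~\ref{lem:W_my} and~\ref{lem:W_l} with multiplicativity of the classes $\cU_{\kappa}^{k}(J)$ for the upper bound, and then pinning down equality by noting that $T_{W_{2N}}$ applied to a suitable parameter lands exactly in $\mathbf{N}_{\kappa_N}^{k_N^+}$ (the paper uses $\tau\equiv 0$ via Theorem~\ref{thm:4.2}, you use the two-sided equivalences \eqref{eq:w_m_kk}, \eqref{eq:w_l_kk}; the paper cites \cite[Theorem 4.3(4)]{DK17} for the sum formulas you obtain by telescoping). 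The minor variations in which auxiliary facts are cited versus re-derived do not change the argument.
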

\begin{proof}
  It follows from \eqref{s4.3} that
\begin{equation}\label{eq:W2N-2}
   W_{2N}(z)=W_{2N-2}(z)M_N(z)L_N.
\end{equation}
Applying \eqref{eq:W2N-2} $N$ times one obtains~\eqref{eq:W1j}.

By \cite[Theorem 4.3 (4)]{DK17}
\[
    \begin{split}
    &\kappa_{N}=\sum\limits_{j=1}^{N}\kappa_{-}(zm_{j})
    ,\quad
    k_{N}^+=\sum\limits_{j=1}^{N}k_{-}(m_{j})+
    \sum\limits_{j=1}^{N}\kappa_{-}(zl_{j}).
    \end{split}
\]
Since by Lemmata~\ref{lem:W_my}, \ref{lem:W_l} $M_i\in\cU_{\kappa_-(zm_i)}^{\kappa_-(m_i)}$, $L_i\in\cU_{0}^{\kappa_-(zl_i)}$, $i\in\dN$,
one obtains from the  factorization formula~\eqref{eq:W1j} $W_{2N}\in\cU_{\kappa'}^{k'}(J)$, where
\begin{equation}\label{eq:kappaN>}
  \kappa'\le \kappa_{N}, \quad k'\le k_{N}^+.
\end{equation}
On the other hand it follows from Lemma~\ref{lem:W} that $f:=T_{W_{2N}[0]}\in\mathbf{N}_{\kappa_N}^{k_{N}^+}$.
Therefore, by Lemma~\ref{lem:W}
\begin{equation}\label{eq:kappaN<}
  \kappa_{N}\le\kappa', \quad k_{N}^+\le k'.
\end{equation}
The statement (ii) follows from~\eqref{eq:kappaN>} and \eqref{eq:kappaN<}.
\end{proof}

Let us formulate an analog  of Theorems~\ref{thm:4.2} and~\ref{thm:4.2A} for the odd truncated  { in\-de\-fi\-nite} moment problem (see~\cite{DK17}).

\begin{theorem}\label{thm:DescrOdd}
Let ${\mathbf
s}=\{s_i\}_{i=0}^{2n_N-2}\in\cH_{\kappa}^{k,reg}$,
let $P_j^+(z)$ and $Q_j^+(z)$ $(0\le j\le 2N-1)$ be
generalized Stieltjes polynomials, let $M_i(z)$ and $L_i$  be given by~\eqref{eq:LMjIx} and let the matrix valued function $W_{2N-1}$ be defined by
\begin{equation}\label{eq:W_2N-1}
W_{2N-1}(z)=
    \begin{pmatrix}
        Q^{+}_{2N-1}(z) & Q^{+}_{2N-2}(z) \\
        P^{+}_{2N-1}(z) & P^{+}_{2N-2}(z) \\
    \end{pmatrix}.
\end{equation}
 Then:
\begin{enumerate}
  \item [(i)] The odd moment problem
$MP_{\kappa}^{k}(\textbf{s},2n_N-2)$ is solvable, if and only if
\begin{equation}\label{eq:Gen_Solv'''}
    \kappa_N:=\nu_-(S_{n_N})\le\kappa\quad\mbox{and}\quad
    k_N:=\nu_-(S_{n_N-1}^+)\le k.
\end{equation}
\item [(ii)] $f\in \mathcal{M}_{\kappa}^{k}(\textbf{s},
  2n_{N}-2)$ if and only if $f$ admits the
  representation
\begin{equation}\label{2p.new6}
    f(z)=T_{W_{2N-1}(z)}[\tau(z)]
    =\frac{Q^{+}_{2N-1}(z)\tau(z)+Q^{+}_{2N-2}(z)}{P^{+}_{2N-1}(z)\tau(z)+P^{+}_{2N-2}(z)},
\end{equation}
where
$    \tau\in N^{k-k_{N}}_{\kappa-\kappa_{N}}$ and ${\tau(z)}^{-1}=o(z)$ as
    $z\widehat{\rightarrow}\infty.$
      \item [(iii)] The matrix valued function  $W_{2N-1}(z)$ belongs to the class $\cU_{\kappa_N}^{k_N}(J)$ and admits the factorization
  \begin{equation}\label{eq:W1j_B}
    W_{2N-1}(z)=M_1(z)L_1\dots L_{N-1} M_N(z).
\end{equation}
\end{enumerate}
\end{theorem}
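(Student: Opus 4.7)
The plan is to mimic the proof of Theorem~\ref{thm:4.2A} (the even case), replacing the last full Schur step by a single \emph{half-step} that applies only the matrix $M_N(z)$ without the subsequent $L_N$. I would establish (iii) first, then (ii), and then (i) as a corollary.

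For the factorization in (iii), the key identity is $W_{2N-1}(z)=W_{2N-2}(z)M_N(z)$. Using the recurrence~\eqref{s4.3}, specifically $Q^+_{2N-1}=Q^+_{2N-3}-zm_NQ^+_{2N-2}$ and its analogue for $P^+$, a direct $2\times 2$ multiplication shows that right-multiplication of $W_{2N-2}$ by $M_N$ turns its first column into $(Q^+_{2N-1},P^+_{2N-1})^T$ while preserving the second; combined with~\eqref{eq:W1j} applied to $W_{2N-2}$ this yields~\eqref{eq:W1j_B}. Applying Lemmas~\ref{lem:W_my} and~\ref{lem:W_l} to the $2N-1$ factors gives $W_{2N-1}\in\cU_{\kappa'}^{k'}(J)$ with $\kappa'\le\sum_{j=1}^N\kappa_-(zm_j)=\kappa_N$ and $k'\le\sum_{j=1}^N\kappa_-(m_j)+\sum_{j=1}^{N-1}\kappa_-(zl_j)=k_N$, the identifications following from \cite[Theorem~4.3]{DK17} (the absence of the last factor $L_N$ removes the single term $\kappa_-(zl_N)$ present in the even-case formula for $k^+_N$). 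The matching lower bound on the indices of $W_{2N-1}$ will come from (ii).

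For (ii), iterating Lemma~\ref{lem:2.4} $N-1$ times reduces $f\in\mathcal{M}_\kappa^k(\mathbf s,2n_N-2)$ to some $f_{N-1}\in\mathcal{M}_{\kappa-\kappa_{N-1}}^{k-k^+_{N-1}}(\mathbf{s}^{(N-1)},2(n_N-n_{N-1})-2)$ with $f=T_{W_{2N-2}}[f_{N-1}]$. The final half-step consists in writing $f_{N-1}=T_{M_N}[\tau]$, equivalently $\tau(z)^{-1}=zm_N(z)+1/f_{N-1}(z)$. The explicit formula~\eqref{eq:ml_j} for $m_N$ together with the $2(n_N-n_{N-1})-1$ prescribed moments of $f_{N-1}$ force $\tau(z)^{-1}=O(1)$, and in particular $\tau(z)^{-1}=o(z)$. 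Lemma~\ref{lem:W_my} identifies the class of $\tau$ as $\mathbf{N}_{\kappa-\kappa_N}^{k-k_N}$ since $\kappa_{N-1}+\kappa_-(zm_N)=\kappa_N$ and $k^+_{N-1}+\kappa_-(m_N)=k_N$, and composing $f=T_{W_{2N-2}}\circ T_{M_N}[\tau]=T_{W_{2N-1}}[\tau]$ gives~\eqref{2p.new6}. The lower bound on the index of $W_{2N-1}$ then follows by choosing $\tau\in\mathbf{N}_0^0$ (e.g.\ a suitable real constant) so that $T_{W_{2N-1}}[\tau]\in\mathcal{M}_{\kappa_N}^{k_N}(\mathbf s,2n_N-2)$; Lemma~\ref{lem:W} then forces $\kappa'\ge\kappa_N$ and $k'\ge k_N$, as in the proof of Theorem~\ref{thm:4.2A}. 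Finally, (i) is immediate: solvability of the moment problem is equivalent to non-emptiness of the parameter class $\mathbf{N}_{\kappa-\kappa_N}^{k-k_N}$, i.e.\ to~\eqref{eq:Gen_Solv'''}.

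The main technical obstacle will be the bookkeeping of the half-step: one must verify that the truncated expansion of $-1/f_{N-1}-zm_N$ is exactly $O(1)$ (no finer control being available without additional moments), and conversely that every $\tau$ with $\tau(z)^{-1}=o(z)$ reproduces an $f_{N-1}$ matching the prescribed $2(n_N-n_{N-1})-1$ moments. This balance between the truncated asymptotics of $f_{N-1}$ and the algebraic definition of $m_N$ from~\eqref{eq:ml_j} is what distinguishes the odd case from the even one; once it is in place, the factorization~\eqref{eq:W1j_B} carries the rest of the argument by reducing to the already-established machinery.
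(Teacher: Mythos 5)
The paper itself offers no proof of this theorem: it is stated as an analogue of Theorems~\ref{thm:4.2} and~\ref{thm:4.2A} and attributed to the reference [DK17], so there is no in-paper argument to compare against line by line. Judged on its own merits, your reconstruction is sound and follows exactly the strategy the paper uses for the even case in Theorem~\ref{thm:4.2A}: the identity $W_{2N-1}(z)=W_{2N-2}(z)M_N(z)$ does follow from the first relation in~\eqref{s4.3} (which gives $Q^+_{2N-1}=Q^+_{2N-3}-zm_NQ^+_{2N-2}$ and likewise for $P^+$), the index count via Lemmata~\ref{lem:W_my} and~\ref{lem:W_l} with the last $L_N$-factor removed correctly produces $k_N$ in place of $k_N^+$, and the reduction of (ii) to a half-step after $N-1$ applications of Lemma~\ref{lem:2.4} is the right mechanism; the sandwich argument for the exact indices of $W_{2N-1}$ mirrors the one the paper gives for $W_{2N}$.

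One point needs correcting. With only the $2(n_N-n_{N-1})-1$ prescribed moments of $f_{N-1}$ you cannot conclude $\tau(z)^{-1}=zm_N(z)+1/f_{N-1}(z)=O(1)$; the truncated expansion of $f_{N-1}$ determines the polynomial part of $-1/f_{N-1}$ only down to the degree-one term, so what actually follows is $\tau(z)^{-1}=o(z)$ as $z\widehat{\rightarrow}\infty$ --- which is precisely the condition required in~\eqref{2p.new6}, so nothing is lost, but the stronger $O(1)$ claim (which you repeat in your closing paragraph as something ``to be verified'') is not available and should not be aimed for. For the converse direction you should also record explicitly that $T_{M_N}[\tau]=-\bigl(zm_N(z)-\tau(z)^{-1}\bigr)^{-1}$ differs from $-\bigl(zm_N(z)\bigr)^{-1}=T_{M_N}[\infty]$ by $o\bigl(z^{-(2(n_N-n_{N-1})-1)}\bigr)$ when $\tau^{-1}=o(z)$, which is what guarantees that every admissible $\tau$ reproduces the prescribed moments of $\mathbf{s}^{(N-1)}$; this is the half-step analogue of the moment-matching in Theorem~\ref{thm:4.2}. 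With these two adjustments the argument is complete.
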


{ Substituting in \eqref{2p.new6} $\tau(z)=\infty$ one obtains from Theorem~\ref{thm:DescrOdd} the following:
\begin{corollary}\label{cor:AC}
  The function $\frac{ Q^{+}_{2N-1}(z) }{ P^{+}_{2N-1}(z) }$ belongs to the class ${\mathbf N}_{\kappa_{N}}^{k_{N}}$.
\end{corollary}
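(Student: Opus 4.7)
My plan is to obtain the corollary by the formal substitution $\tau(z) = \infty$ in the parametrization~\eqref{2p.new6} of Theorem~\ref{thm:DescrOdd}(ii), which immediately yields $T_{W_{2N-1}(z)}[\infty] = Q^+_{2N-1}(z)/P^+_{2N-1}(z)$. What needs to be justified is that this degenerate choice of parameter produces a function of class exactly $\mathbf{N}_{\kappa_N}^{k_N}$.

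To make the substitution rigorous and to track the indices precisely, I would use the factorization $W_{2N-1}(z) = M_1(z) L_1 \cdots L_{N-1} M_N(z)$ from Theorem~\ref{thm:DescrOdd}(iii) together with the equivalences in Lemmas~\ref{lem:W_my} and~\ref{lem:W_l}. Starting from $T_{M_N}[\infty] = -1/(zm_N(z))$, one obtains by Proposition~\ref{prop:2.1}(i)--(iii) that this function lies in $\mathbf{N}_{\kappa_-(zm_N)}^{\kappa_-(m_N)}$ and satisfies $-1/(zm_N(z)) = o(1)$ as $z\widehat{\rightarrow}\infty$. Next I would step alternately through $L_{N-1}, M_{N-1}, L_{N-2}, \ldots, L_1, M_1$, applying Lemma~\ref{lem:W_l} or Lemma~\ref{lem:W_my} as appropriate; each application increments the pair of indices by $(0, \kappa_-(zl_j))$ or $(\kappa_-(zm_j), \kappa_-(m_j))$ respectively. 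The asymptotic conditions required by these lemmas ($\phi = o(1)$ for $L_j$ and $\tau^{-1} = o(z)$ for $M_j$) are preserved inductively: after each $L_j$ the intermediate function has a nonzero constant limit $l_j$ at infinity, and after each subsequent $M_j$ the function again decays like $-1/(zm_j(z))$.

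Summing the contributions yields $\kappa' = \sum_{j=1}^N \kappa_-(zm_j) = \kappa_N$ and $k' = \sum_{j=1}^N \kappa_-(m_j) + \sum_{j=1}^{N-1} \kappa_-(zl_j) = k_N$, matching the index of $W_{2N-1}$ in $\cU_{\kappa_N}^{k_N}(J)$. The main point to monitor is the consistent verification of the asymptotic hypotheses of Lemmas~\ref{lem:W_my}--\ref{lem:W_l} at every intermediate step; because those lemmas are stated as equivalences rather than one-sided implications, the final indices come out as equalities, and therefore the class $\mathbf{N}_{\kappa_N}^{k_N}$ is attained exactly rather than merely as an upper bound.
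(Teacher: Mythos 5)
Your proof is correct and follows the paper's own route: the paper simply substitutes $\tau=\infty$ into the parametrization \eqref{2p.new6} of Theorem~\ref{thm:DescrOdd}(ii) and reads off the class ${\mathbf N}_{\kappa_N}^{k_N}$ from the case $\kappa=\kappa_N$, $k=k_N$. Your additional unwinding of the factorization $W_{2N-1}=M_1L_1\cdots L_{N-1}M_N$ via Lemmas~\ref{lem:W_my} and~\ref{lem:W_l}, with the inductive verification of the asymptotic hypotheses and the index bookkeeping $\kappa_N=\sum_{j=1}^N\kappa_-(zm_j)$, $k_N=\sum_{j=1}^N\kappa_-(m_j)+\sum_{j=1}^{N-1}\kappa_-(zl_j)$, is a sound and more self-contained justification of that degenerate substitution.
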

Similarly, substituting in \eqref{eq:LFT_W} $\tau(z)\equiv 0$ one obtains from Theorem~\ref{thm:4.2}.}
\begin{corollary}\label{cor:BD}
  The function $\frac{ Q^{+}_{2N}(z) }{ P^{+}_{2N}(z) }$ belongs to the class ${\mathbf N}_{\kappa_{N}}^{k_{N}^+}$.
\end{corollary}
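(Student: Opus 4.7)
The plan is to mimic the parenthetical derivation of Corollary~\ref{cor:AC}, with the role of $\tau = \infty$ now played by $\tau \equiv 0$, and to feed this into the representation formula~\eqref{eq:LFT_W} of Theorem~\ref{thm:4.2}. The constant function $\tau(z) \equiv 0$ lies in the classical Stieltjes class $\mathbf{S} = \mathbf{N}_{0}^{0}$ and trivially satisfies $\tau(z) = o(1)$ as $z \widehat{\rightarrow} \infty$, so it is an admissible parameter in~\eqref{eq:tau_N+x} with $\kappa_2 = k_2 = 0$.

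By Theorem~\ref{thm:4.2A}(ii) the resolvent matrix $W_{2N}(z)$ defined in~\eqref{eq:ResM_2N} belongs to $\cU_{\kappa_N}^{k_N^+}(J)$. Reading off the entries of $W_{2N}$ and substituting $\tau \equiv 0$ in the linear fractional transformation gives
\[
T_{W_{2N}}[0] = \frac{Q^{+}_{2N}(z)}{P^{+}_{2N}(z)},
\]
which is a well defined meromorphic function since $P^{+}_{2N}(0) = 1 \ne 0$ by~\eqref{6p.eq:propStpol1}. Applying Lemma~\ref{lem:W} to $W = W_{2N}$ and $\tau \equiv 0$ (with $\kappa_2 = k_2 = 0$) yields
\[
\frac{Q^{+}_{2N}(z)}{P^{+}_{2N}(z)} = T_{W_{2N}}[0] \in \mathbf{N}_{\kappa_N + 0}^{k_N^+ + 0} = \mathbf{N}_{\kappa_N}^{k_N^+},
\]
which is the claim. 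Alternatively, one may argue directly from Theorem~\ref{thm:4.2}(ii) taken with $\kappa = \kappa_N$ and $k = k_N^+$: the moment problem $MP_{\kappa_N}^{k_N^+}(\mathbf{s}, 2n_N - 1)$ is then solvable by Theorem~\ref{thm:4.2}(i), and $Q^{+}_{2N}/P^{+}_{2N}$ is exactly the solution obtained when $f_N \equiv 0$, hence belongs to $\mathbf{N}_{\kappa_N}^{k_N^+}$ by definition of $\mathcal{M}_{\kappa_N}^{k_N^+}$.

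I do not anticipate any real obstacle here: the full content of the corollary is carried by the class assignment $W_{2N} \in \cU_{\kappa_N}^{k_N^+}(J)$ from Theorem~\ref{thm:4.2A}(ii) and the transfer property in Lemma~\ref{lem:W}, both already in place. The only sanity point worth mentioning explicitly is the non-vanishing of $P^{+}_{2N}$, and this is secured by~\eqref{6p.eq:propStpol1}.
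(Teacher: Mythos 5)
Your ``alternative'' argument at the end is precisely the paper's derivation: the corollary is obtained by substituting $f_N\equiv 0$ (an admissible parameter, since $0\in\mathbf{N}_0^0$ and $0=o(1)$) into \eqref{eq:LFT_W} and reading off Theorem~\ref{thm:4.2}(ii) with $\kappa=\kappa_N$, $k=k_N^+$, so that $Q^+_{2N}/P^+_{2N}\in\cM_{\kappa_N}^{k_N^+}(\mathbf{s},2n_N-1)\subset\mathbf{N}_{\kappa_N}^{k_N^+}$. That part is correct and complete.

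Your primary argument, however, does not quite close. Lemma~\ref{lem:W} only asserts $T_W[\tau]\in\mathbf{N}_{\kappa'}^{k'}$ for \emph{some} $\kappa'\le\kappa_1+\kappa_2$ and $k'\le k_1+k_2$; it gives an upper bound on the indices, not equality. Since membership in $\mathbf{N}_{\kappa}^{k}$ requires exactly $\kappa$ (resp.\ $k$) negative squares, the step ``$T_{W_{2N}}[0]\in\mathbf{N}_{\kappa_N+0}^{k_N^++0}$'' is not licensed by that lemma: you only get $T_{W_{2N}}[0]\in\mathbf{N}_{\kappa'}^{k'}$ with $\kappa'\le\kappa_N$, $k'\le k_N^+$, and the possibility of a drop in the index is exactly what has to be excluded. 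There is also a latent circularity: in the paper's proof of Theorem~\ref{thm:4.2A}(ii) the lower bound $\kappa_N\le\kappa'$, $k_N^+\le k'$ for the class of $W_{2N}$ is obtained precisely from the fact that $T_{W_{2N}}[0]\in\mathbf{N}_{\kappa_N}^{k_N^+}$ --- i.e.\ from the content of this corollary --- so deducing the corollary from Theorem~\ref{thm:4.2A}(ii) plus Lemma~\ref{lem:W} runs the logic backwards. Keep the second argument as the proof; the first should at most be a remark, with the inequality stated honestly.
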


\subsection{Two lemmata about linear fractional transformations $T_M$ and $T_L$}\label{sec:3.5}
The statements of Lemmata~\ref{lem:W_my} and~\ref{lem:W_l} fail to hold if the conditions
{\begin{equation}\label{eq:Nev_tau1}
\tau(z)^{-1}=o(z)\mbox{ as }z\widehat{\rightarrow}\infty,
\end{equation}
\begin{equation}\label{eq:Nev_phi1}
  \phi(z)= o(1)\quad\mbox{ as }z\widehat{\rightarrow}\infty
\end{equation}}
 are not satisfied. In these cases  the number of moments interpolating by the linear fractional transformations $T_W[\tau]$   can be reduced and also their indices  can decrease.
We will start with the linear fractional transformations $T_M$ in the simplest cases when $m$ is a positive constant and hence $M\in\cU_0^0(J)$.
\begin{lemma}\label{lem:W_m2}
Let $M(z)$ be a $2\times 2$ matrix valued function
\begin{equation}\label{eq:W_m2}
    M(z)=\begin{pmatrix}
   1  &  0 \\
   -z m  &  1\\
\end{pmatrix},{\quad m>0,}
\end{equation}
let $\tau\in  {\mathbf N}_{\kappa}^{k}$ be a  function from $ {\mathbf N}_{\kappa}^{k}$, such that
$\tau(z)^{-1}\ne o(z)\mbox{ as }z\widehat{\rightarrow}\infty$ and let $\phi=T_{M}[\tau]$.
Then:
\begin{enumerate}
  \item [({ i})] either  $\phi\in  {\mathbf N}_{\kappa}^{k}$ and $\phi(z)=o(1)\mbox{ as }z\widehat{\rightarrow}\infty$,
  \item [({ ii})] or  $\phi\in  {\mathbf N}_{\kappa-1}^{k}$.
\end{enumerate}
\end{lemma}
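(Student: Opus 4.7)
The plan is to rewrite the relation $\phi=T_M[\tau]$ in the equivalent form $-1/\phi=zm-1/\tau$, first use Proposition~\ref{prop:2.1} to pin down the indices of $\phi$, and then distinguish cases (i) and (ii) by applying Lemma~\ref{lem:W_my} to the inverse matrix $M^{-1}$. Since $m>0$ one has $\kappa_-(zm)=\kappa_-(m)=0$ and $\kappa_-(-zm)=1$, and Proposition~\ref{prop:2.1}(ii) gives $-1/\tau\in\mathbf{N}_\kappa^{-k}$, i.e.\ $-1/\tau\in\mathbf{N}_\kappa$ and $-1/(z\tau)\in\mathbf{N}_k$. Applying Proposition~\ref{prop:2.1}(iv) to the decomposition $-1/\phi=zm+(-1/\tau)$ yields $\kappa'\le\kappa$, and to the reversed decomposition $-1/\tau=(-zm)+(-1/\phi)$ (noting $-zm\in\mathbf{N}_1$) yields $\kappa\le\kappa'+1$. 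An analogous computation applied to $-1/(z\phi)=m+(-1/(z\tau))$, using that $m$ is a positive constant in $\mathbf{N}_0$, produces $k'=k$. Consequently $\phi\in\mathbf{N}_{\kappa'}^k$ with $\kappa'\in\{\kappa,\kappa-1\}$.

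To decide which value of $\kappa'$ actually occurs, I would transplant Lemma~\ref{lem:W_my} to $M^{-1}$. Writing $M^{-1}=\begin{pmatrix}1&0\\-z(-m)&1\end{pmatrix}$, the constant polynomial $\widetilde m=-m$ satisfies $\kappa_-(z\widetilde m)=1$ and $\kappa_-(\widetilde m)=0$, so $M^{-1}\in\cU_1^0(J)$. Under the additional hypothesis $\phi^{-1}=o(z)$ as $z\widehat{\rightarrow}\infty$, Lemma~\ref{lem:W_my} applied to $(M^{-1},\phi)$ gives the equivalence $\phi\in\mathbf{N}_{\kappa''}^{k''}\iff T_{M^{-1}}[\phi]=\tau\in\mathbf{N}_{1+\kappa''}^{k''}$. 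Since $\tau\in\mathbf{N}_\kappa^k$, this forces $\phi\in\mathbf{N}_{\kappa-1}^k$, which is case (ii). Contrapositively, if the dichotomy above places $\phi$ in $\mathbf{N}_\kappa^k$ rather than in $\mathbf{N}_{\kappa-1}^k$, then necessarily $\phi^{-1}\ne o(z)$.

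It remains to convert $\phi^{-1}\ne o(z)$ into the asymptotic $\phi=o(1)$. The condition says that $1/(z\phi)$ does not tend to $0$ at infinity, so along some nontangential sequence $z_n\widehat{\rightarrow}\infty$ one has $|z_n\phi(z_n)|\le C$, forcing $|\phi(z_n)|\le C/|z_n|\to 0$. Invoking the existence of nontangential boundary limits at $\infty$ for functions in $\mathbf{N}_{\kappa'}^k$ (valued in $\dC\cup\{\infty\}$), such a limit must then equal $0$, yielding $\phi=o(1)$ as required. The principal obstacle is precisely this last step: one must appeal to the standard but somewhat technical boundary regularity of generalized Nevanlinna functions at $\infty$ and separate the case of a finite nontangential limit from that of an infinite one, so that the one-sequence estimate $\phi(z_n)\to 0$ genuinely propagates to a full nontangential asymptotic rather than leaving room for oscillatory behaviour along other rays.
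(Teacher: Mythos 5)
Your index bookkeeping is sound and genuinely different from the paper's: the two-sided application of Proposition~\ref{prop:2.1}(iv) to $-1/\phi=zm+(-1/\tau)$ and to the reversed decomposition pins down $\phi\in\mathbf{N}_{\kappa'}^{k}$ with $\kappa'\in\{\kappa-1,\kappa\}$, and the inverse-matrix trick with Lemma~\ref{lem:W_my} correctly shows that $\phi^{-1}=o(z)$ forces case (ii). The gap is exactly where you flag it, and it is not a removable technicality: the fact you need --- that a function in $\mathbf{N}_{\kappa}^{k}$ has a nontangential limit at $\infty$ in $\dC\cup\{\infty\}$ --- is nowhere available in the paper, and for the ambient class $\mathbf{N}_{\kappa}$ it is simply false. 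For instance, $f(z)=\int_{\dR}\bigl(\tfrac{1}{t-z}-\tfrac{t}{1+t^{2}}\bigr)\,d\mu(t)$ with $\mu=\sum_{n}t_{n}(\delta_{t_{n}}+\delta_{-t_{n}})$ and $t_{n}=e^{e^{n}}$ lies in $\mathbf{N}_{0}$, has $\RE f(iy)\equiv 0$, while $\IM f(iy)$ oscillates between values $\ge 1$ (at $y=t_{N}$) and values tending to $0$ (at $y=\sqrt{t_{N}t_{N+1}}$); so $f(iy_{n})\to 0$ along one sequence yet $f\ne o(1)$. Hence the one-sequence estimate $|\phi(z_{n})|\le C/|z_{n}|$ that you extract from $\phi^{-1}\ne o(z)$ cannot be promoted to $\phi(z)=o(1)$ without a boundary-regularity theorem specific to the generalized Stieltjes subclass, which you would have to state and prove.

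The paper avoids this entirely by never asking for a limit of $\phi$ itself: it writes $\phi(z)=\dfrac{1/z}{m-\frac{1}{z\tau(z)}}$ and runs the case analysis on $a:=\lim_{z\widehat{\rightarrow}\infty}\frac{-1}{z\tau(z)}$, whose existence in $[-\infty,+\infty)$ comes from Langer's characterization \eqref{infgpol} of the generalized pole at $\infty$ applied to $-1/\tau\in\mathbf{N}_{\kappa}$. Whenever the multiplicity of the GPNT at $\infty$ is preserved under adding $mz$ --- that is, $a=-\infty$, $a\ge 0$, or $a<-m$ --- the denominator tends to $m+a\ne 0$ (or to $+\infty$) and $\phi(z)=o(1)$ drops out by inspection; the remaining window $a\in[-m,0)$ is precisely where one negative square is lost, which is your case (ii). I suggest you replace your last step by this computation (or, alternatively, supply a proof of the limit statement for $\mathbf{N}_{\kappa}^{k}$); the rest of your argument can stand as an alternative to the paper's GPNT counting.
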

\begin{proof}
{\bf { 1}.} {\it Verification of ${ (i)}$ for $\tau\in  {\mathbf N}_{\kappa}^{k}$, such that$:$}
\begin{equation}\label{eq:3.11}
  \lim_{z\widehat{\rightarrow}\infty}\frac{-1}{z\tau(z)}=-\infty.
\end{equation}
By Lemma~\ref{prop:2.1} $-\tau^{-1}\in  {\mathbf N}_{\kappa}^{-k}$.
If~\eqref{eq:3.11} holds then $mz-\tau(z)^{-1}$ has GPNT (generalized pole of negative type) at $\infty$ of the same multiplicity as $-\tau(z)^{-1}$, i.e.
\begin{equation}\label{eq:Pmult1}
  {\kappa_\infty\left(mz-\frac{1}{\tau(z)}\right)=\kappa_\infty\left(-\frac{1}{\tau(z)}\right).}
  \end{equation}
Since also
\begin{equation}\label{eq:Pmult2}
\kappa_\infty\left(m-\frac{1}{z\tau(z)}\right)=\kappa_\infty\left(-\frac{1}{z\tau(z)}\right).
\end{equation}
one obtains by Theorem~\ref{prop:GPNT}
$mz-\tau(z)^{-1}\in  {\mathbf N}_{\kappa}^{-k}$. Hence $\phi(z)=\frac{-1}{mz-\tau(z)^{-1}}\in  {\mathbf N}_{\kappa}^{k}$ by Lemma~\ref{prop:2.1} and, moreover, $\phi(z)=o(1)$  as $z\widehat{\rightarrow}\infty$, since
\[
  \lim_{z\widehat{\rightarrow}\infty}\phi(z)=
  \lim_{z\widehat{\rightarrow}\infty}\frac{1/z}{m-(z\tau(z))^{-1}}=0.
\]

{\bf 2.} {\it Verification of $({ i})$ for $\tau\in  {\mathbf N}_{\kappa}^{k}$, such that$:$}
\begin{equation}\label{eq:3.13}
  \lim_{z\widehat{\rightarrow}\infty}\frac{-1}{z\tau(z)}=a 
\end{equation}
and $a\ge 0$. In this case $\kappa_\infty(-\tau(z)^{-1})=0$ and  $-\tau(z)^{-1}$ admits the representation
\begin{equation}\label{eq:3.14}
 -\tau(z)^{-1}=az-\tau_1(z)^{-1},
 \end{equation}
 where  $\lim\limits_{z\widehat{\rightarrow}\infty}\frac{-1}{z\tau_1(z)}=0$.
 Then
the function
 $mz-\tau(z)^{-1}=(m+a)z-\tau_1(z)^{-1}$ has no  GPNT at $\infty$ since
\[
\lim_{z\widehat{\rightarrow}\infty}\left(m-\frac{1}{z\tau(z)}\right)=m+a>0.
\]
 Then~\eqref{eq:Pmult1} and \eqref{eq:Pmult2} hold and by Theorem~\ref{prop:GPNT}
 $mz-\tau(z)^{-1}\in  {\mathbf N}_{\kappa}^{-k}$. Moreover,
\begin{equation}\label{eq:3.16}
  \lim_{z\widehat{\rightarrow}\infty}\phi(z)=
  \lim_{z\widehat{\rightarrow}\infty}\frac{1/z}{m+a-(z\tau(z))^{-1}}=0.
\end{equation}
and hence $({ i})$ holds.

{\bf 3.} {\it Verification of $({ i})$ for $\tau\in  {\mathbf N}_{\kappa}^{k}$, such that
\eqref{eq:3.13} holds and $a<-m$ $:$}
In this case
\[
\lim_{z\widehat{\rightarrow}\infty}\left(m-\frac{1}{z\tau(z)}\right)=m+a<0
\]
and hence
\[
  \kappa_\infty\left(mz-\frac{1}{\tau(z)}\right)=\kappa_\infty\left(-\frac{1}{\tau(z)}\right)=1.
  \]
Since also~\eqref{eq:Pmult2}
holds then
by Theorem~\ref{prop:GPNT}
 $mz-\tau(z)^{-1}\in  {\mathbf N}_{\kappa}^{-k}$, and  by \eqref{eq:3.16} $\phi(z)=o(1)$   as $z\widehat{\rightarrow}\infty$. Hence (1) holds.

{\bf 4.} {\it Verification of $({ ii})$ for $\tau\in  {\mathbf N}_{\kappa}^{k}$, such that
\eqref{eq:3.13} holds and $a\in[-m,0)$ $:$}
In this case
\[
  \kappa_\infty\left(mz-\frac{1}{\tau(z)}\right)=0,\quad \kappa_\infty\left(-\frac{1}{\tau(z)}\right)=1
  \]
and since \eqref{eq:Pmult2} holds then  $mz-\tau(z)^{-1}\in  {\mathbf N}_{\kappa-1}^{-k}$ by Theorem~\ref{prop:GPNT}. Therefore, $\phi(z)=\frac{-1}{mz-\tau(z)^{-1}}\in  {\mathbf N}_{\kappa-1}^{k}$ by Lemma~\ref{prop:2.1}.
This proves $({ ii})$.
\end{proof}
\begin{lemma}\label{lem:W_l2}
Let  $L$ be the $2\times 2$ matrix
\begin{equation}\label{eq:W_l}
    L=\begin{pmatrix}
   1  &  l \\
   0  &  1\\
\end{pmatrix}, \quad\mbox{ where }l>0,
\end{equation}
let $\phi$ be a  function from $ {\mathbf N}_{\kappa}^{k}$, such that
\begin{equation}\label{eq:Nev_phi2}
  \phi(z)\ne o(1){\quad\mbox{ as }z\widehat{\rightarrow}\infty}
\end{equation}
and let $\tau=T_{L}[\phi]=l+\phi$.
Then:
\begin{enumerate}
  \item [({ i})] either  $\tau\in  {\mathbf N}_{\kappa}^{k}$ and $\tau(z)^{-1}=o(z)\mbox{ as }z\widehat{\rightarrow}\infty$,
  \item [({ ii})] or  $\tau\in  {\mathbf N}_{\kappa}^{k-1}$.
\end{enumerate}
\end{lemma}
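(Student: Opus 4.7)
The plan is to parallel the proof of Lemma~\ref{lem:W_m2}. Since $l\in\dR$, the Nevanlinna-Pick form of $\tau=l+\phi$ coincides with that of $\phi$, giving $\tau\in{\mathbf N}_\kappa$ at once. The delicate point is the Stieltjes index, i.e., the class of $z\tau=lz+z\phi$.

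First I would apply Proposition~\ref{prop:2.1}(iv) to $z\tau=lz+z\phi$: since $z\phi\in{\mathbf N}_k$ and $lz\in{\mathbf N}_0$ (as $l>0$), one obtains $z\tau\in{\mathbf N}_{k'}$ with $k'\le k$. Conversely, writing $z\phi=z\tau+(-lz)$ with $-lz\in{\mathbf N}_1$ by~\eqref{3p.kappaP}, the same proposition yields $k\le k'+1$. Hence $k'\in\{k-1,k\}$, and the lemma reduces to pinning down which alternative occurs.

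To this end, I would invoke Proposition~\ref{prop:GPNT}. Since $z\phi$ and $z\tau$ differ by the entire real function $lz$, they share the same poles in $\dC_+$ and the same generalized poles of non-positive type at every finite real point; only $\kappa_\infty$ can change, and $k-k'=\kappa_\infty(z\phi)-\kappa_\infty(z\tau)$. A case analysis on the non-tangential limit $c:=\lim_{z\widehat{\rightarrow}\infty}\phi(z)\in[-\infty,+\infty]\setminus\{0\}$ (its existence is ensured by the integral representation of generalized Nevanlinna functions; the value $c=0$ is excluded by the hypothesis $\phi\ne o(1)$) then splits as follows. If $c\in(0,+\infty]$ or $c\in[-\infty,-l)$, the leading coefficient of $z\phi$ at $\infty$ keeps its sign after adding $lz$, so $\kappa_\infty(z\tau)=\kappa_\infty(z\phi)$ and $k'=k$; moreover $\tau(z)\to l+c\ne 0$ (possibly infinite), forcing $\tau(z)^{-1}=o(z)$, which is case~(i). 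If $c\in[-l,0)$, then $z\phi\sim cz$ with $c<0$ yields $\kappa_\infty(z\phi)=1$, while $z\tau\sim(c+l)z$ with $c+l\ge 0$ yields $\kappa_\infty(z\tau)=0$; thus $k'=k-1$, which is case~(ii).

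The hard part will be the boundary sub-case $c=-l$: here the leading linear term of $z\tau$ vanishes identically, and I must confirm via the next-order terms in the asymptotic expansion of $\phi$ at $\infty$ (available because $\phi\in{\mathbf N}_\kappa^k$) that still $\kappa_\infty(z\tau)=0$, so that the index drops by exactly one. The upshot is that whenever $k'=k$, the value of $c$ forces $\tau(z)^{-1}=o(z)$, which validates the dichotomy asserted in the lemma.
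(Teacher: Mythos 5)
Your argument is correct and follows essentially the same route as the paper: a case analysis on the nontangential limit $c=\lim_{z\widehat{\rightarrow}\infty}\phi(z)$, using Proposition~\ref{prop:GPNT} and the fact that adding $lz$ can only affect the generalized pole at $\infty$, with the split at $c\in[-l,0)$ giving case~(ii) and all other values giving case~(i). The preliminary bracketing $k'\in\{k-1,k\}$ via Proposition~\ref{prop:2.1}(iv) is an extra (harmless) step the paper does not need, and your worry about the boundary sub-case $c=-l$ resolves immediately from \eqref{infgpol}, exactly as in the paper's fourth case.
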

\begin{proof}
  {\bf 1.} {\it Verification of $({ i})$ for $\phi\in  {\mathbf N}_{\kappa}^{k}$, such that $z\phi(z)$ has a GPNT 
  at $\infty$ and$:$}
\begin{equation}\label{eq:3.11x}
   \lim_{z\widehat{\rightarrow}\infty}\phi(z)
   =-\infty.
\end{equation}
The function $z\tau(z)=lz+z\phi(z)$ has GPNT at $\infty$ of the same multiplicity as $z\phi(z)$ and by Theorem~\ref{prop:GPNT}
$z\tau(z)\in  {\mathbf N}_{k}$. Hence $\tau\in  {\mathbf N}_{\kappa}^{k}$ and
the equality
\[
  \lim_{z\widehat{\rightarrow}\infty}\frac{1}{z\tau(z)}=
  \lim_{z\widehat{\rightarrow}\infty}\frac{1}{lz+z\phi(z)}{=0}
\]
implies that  $\tau(z)^{-1}=o(z)$  as $z\widehat{\rightarrow}\infty$. Thus (1) holds.

{\bf 2.} {\it Verification of $({ i})$ for $\phi\in  {\mathbf N}_{\kappa}^{k}$, such that
$
  \lim\limits_{z\widehat{\rightarrow}\infty}\phi(z)=a $ and $a>0$$:$}
In this case  $z\phi(z)$ admits the representation
\begin{equation}\label{eq:3.14A}
 z\phi(z)=az+z\phi_1(z),
 \end{equation}
 where  $\lim\limits_{z\widehat{\rightarrow}\infty}\phi_1(z)=0$ and $\phi_1\in  {\mathbf N}_{\kappa}^{k}$ by Theorem~\ref{prop:GPNT}.
 The function
 $z\tau(z)=lz+z\phi(z)$  has no  GPNT at $\infty$ and therefore $\tau\in  {\mathbf N}_{\kappa}^{k}$. Moreover,
\begin{equation}\label{eq:3.16L}
  \lim_{z\widehat{\rightarrow}\infty}\frac{1}{z\tau(z)}=
  \lim_{z\widehat{\rightarrow}\infty}\frac{1}{(l+a)z+z\phi(z)}=0.
\end{equation}
and hence $({ i})$ holds.

{\bf 3.} {\it If   $\phi\in  {\mathbf N}_{\kappa}^{k}$, $\lim\limits_{z\widehat{\rightarrow}\infty}\phi(z)=a$ and $a<-l${,} then also $({ i})$ holds$:$}
In this case
\[
\kappa_\infty(z\phi)=1 \quad\mbox{and}\quad \kappa_\infty(z\tau)=1,
\]
that is the functions $z\phi(z)$ and $z\tau(z)$ have GPNT at $\infty$ of the same multiplicity.
By Theorem~\ref{prop:GPNT}  $\tau\in  {\mathbf N}_{\kappa}^{k}$ and by~\eqref{eq:3.16L} $\tau(z)^{-1}=o(z)$   as $z\widehat{\rightarrow}\infty$. Hence $({ i})$ holds.

{\bf 4.} {\it  If   $\lim\limits_{z\widehat{\rightarrow}\infty}\phi(z)=a$ and $a\in[-l,0)$ then $({ ii})$ holds$:$}
In this case
\[
\kappa_\infty(z\phi)=1 \quad\mbox{and}\quad \kappa_\infty(z\tau)=0,
\]
and hence $z\tau\in{\mathbf N}_{k-1}$ by Theorem~\ref{prop:GPNT}. Thus $\tau\in  {\mathbf N}_{\kappa}^{k-1}$.
\end{proof}

\begin{corollary}\label{cor:fo0}
  Let a matrix valued function $W_{2n}(z)$ of the form~\eqref{eq:ResM_2N} belong to the class $\cU_0^0(J)$, let
 $\phi\in  {\mathbf N}_{\kappa}^{k}$, \eqref{eq:Nev_phi1} fails to hold, let $n\ge \mbox{min }\{\kappa,k\}$ and let $f=T_{W_{2n}}[\phi]$.
Then there exists $r\in\dZ_+$ $(r\le\mbox{min }\{\kappa,k\})$, such that :
\begin{enumerate}
  \item [({ i})] either  $f\in  \cM_{\kappa-r}^{k-r}(\mathbf{s},2(n-r)-2)$;
  \item [({ ii})] or  $f\in  \cM_{\kappa-r}^{k-r-1}(\mathbf{s},2(n-r)-3)$.
\end{enumerate}
In particular, $f(z)=o(1)$ as $z\widehat{\rightarrow}\infty$.
\end{corollary}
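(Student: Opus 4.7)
The plan is to process the composition $T_{W_{2n}}$ one factor at a time from inside out, using the factorization $W_{2n}=M_1L_1\cdots M_nL_n$ from Theorem~\ref{thm:4.2A}(i). The index additivity contained in the proof of Theorem~\ref{thm:4.2A}(ii), together with~\eqref{3p.kappaP}, forces each $m_i$ and $l_i$ to be a positive constant under the hypothesis $W_{2n}\in\cU_0^0(J)$, so every $M_i,L_i\in\cU_0^0(J)$ individually, and the step-by-step Lemmas~\ref{lem:W_m2} and~\ref{lem:W_l2} apply in the precise form stated. I would set $\psi_0=\phi$ and iterate $\psi_{2j-1}=T_{L_{n-j+1}}[\psi_{2j-2}]$, $\psi_{2j}=T_{M_{n-j+1}}[\psi_{2j-1}]$, so that $f=\psi_{2n}$.

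At each sub-step the governing lemma is dictated by the asymptotic status of the current $\psi$. If the relevant condition~\eqref{eq:Nev_phi} (for $L$-steps) or~\eqref{eq:Nev_tau} (for $M$-steps) holds, then Lemma~\ref{lem:W_l} or~\ref{lem:W_my} applies and preserves both indices, putting us into a \emph{good state}; otherwise Lemma~\ref{lem:W_l2} or~\ref{lem:W_m2} applies, with a bad $L$-step costing one unit of $k$ and a bad $M$-step costing one unit of $\kappa$. Starting from $\phi\ne o(1)$, each full block either enters the good state (after which all subsequent blocks pass through with no further drops) or remains in the bad state, and the latter can only occur when an L-bad step is followed by an M-bad step whose output still fails $o(1)$; in that situation both indices decrease by one. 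Hence at most $\min\{\kappa,k\}$ stay-in-bad blocks can occur before the indices are exhausted, and the hypothesis $n\ge\min\{\kappa,k\}$ guarantees a subsequent factor is always available to exit.

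Let $r$ denote the number of stay-in-bad blocks. Depending on the exit, the outer remainder of the factorization is either the odd resolvent $W_{2(n-r)-1}\in\cU_0^0(J)$ (exit at an $L$-step, with intermediate $\sigma\in\mathbf{N}_{\kappa-r}^{k-r}$ satisfying $\sigma^{-1}=o(z)$) or the even resolvent $W_{2(n-r-1)}\in\cU_0^0(J)$ (exit at an $M$-step, with $\sigma=o(1)$). In the first case, Theorem~\ref{thm:DescrOdd}(ii) yields alternative~(i) directly. In the second case, Theorem~\ref{thm:4.2}(ii) applied to $\sigma\in\mathbf{N}_{\kappa-r}^{k-r-1}$ (L-bad followed by M-good) gives alternative~(ii), while the possibility $\sigma\in\mathbf{N}_{\kappa-r-1}^{k-r-1}$ (L-bad followed by M-bad with $\sigma=o(1)$) gives a stronger conclusion that implies alternative~(i) with $r$ replaced by $r+1$. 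The ``in particular'' claim $f(z)=o(1)$ then follows from the asymptotic expansion~\eqref{3p.2.3x} built into membership in any $\cM_{\kappa'}^{k'}(\mathbf{s},\ell)$.

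The chief technical obstacle is the case analysis at the exit block, since Lemma~\ref{lem:W_m2} gives no information on whether the output of an M-bad step is $o(1)$. One must enumerate all exit configurations, verify that each delivers exactly one of the two stated alternatives with an admissible value of $r$, and confirm that the truncated factorization really matches $W_{2(n-r)-1}$ or $W_{2(n-r-1)}$ so that the classical descriptions in Theorems~\ref{thm:4.2} and~\ref{thm:DescrOdd} apply.
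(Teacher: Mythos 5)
Your proposal follows essentially the same route as the paper's own proof: peel the factorization $W_{2n}=M_1L_1\cdots M_nL_n$ from the inside out, apply Lemmas~\ref{lem:W_l2} and~\ref{lem:W_m2} alternately to track the unit drops of $k$ and $\kappa$, terminate by exhaustion of $\min\{\kappa,k\}$, and read off the two alternatives from Theorems~\ref{thm:DescrOdd} and~\ref{thm:4.2} applied to the remaining outer resolvent matrix. Your explicit observation that $W_{2n}\in\cU_0^0(J)$ forces every $m_i$ and $l_i$ to be a positive constant (via the index additivity and~\eqref{3p.kappaP}) is a detail the paper leaves implicit, but the case analysis and conclusions are otherwise the same.
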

\begin{proof}
By Theorem~\ref{thm:DescrOdd} the matrix valued function $W_{2n}(z)$
admits the factorization
\begin{equation}\label{eq:W+fact}
  W_{2n}(z)=M_1(z)L_1\dots M_{n}(z)L_{n}.
\end{equation}
Let us denote $\tau_1:=T_{L_n}[\phi]$.
By Lemma~\ref{lem:W_l2} there are three possibilities
\begin{enumerate}
  \item [(a1)] either $\tau_1\in  \mathbf{N}_{\kappa}^{k}$ and $\tau_1^{-1}(z)=o(z)$;
  \item[(a2)] or $\tau_1\in  \mathbf{N}_{\kappa}^{k-1}$ and $\tau_1^{-1}(z)=o(z)$;
  \item [(a3)] or $\tau_1\in  \mathbf{N}_{\kappa}^{k-1}$ and $\tau_1^{-1}(z)\ne o(z)$.
\end{enumerate}
In the  case (a1) one obtains by Theorem~\ref{thm:DescrOdd}  $f\in  \cM_{\kappa}^{k}(\mathbf{s},2n-2)$, which gives ({i}) with $r=0$.

In the  case (a2)  by Theorem~\ref{thm:DescrOdd}  $f\in  \cM_{\kappa}^{k-1}(\mathbf{s},2n-2)\subset \cM_{\kappa}^{k-1}(\mathbf{s},2n-3)$,
 which gives ({ ii}) with $r=0$.

In the  case (a3)  we apply Lemma~\ref{lem:W_m2} to the function $\phi_1(z)=T_{M_{n}}[\tau_1(z)]$ and then again there are three possibilities:
\begin{enumerate}
    \item[(b1)] $\phi_1:=T_{M_n}[\tau]\in  \mathbf{N}_{\kappa}^{k-1}$ and $\phi_1(z)=o(1)$;
  \item [(b2)] $\phi_1:=T_{M_n}[\tau]\in  \mathbf{N}_{\kappa-1}^{k-1}$  and $\phi_1(z)=o(1)$;
  \item [(b3)]   $\phi_1:=T_{M_n}[\tau]\in  \mathbf{N}_{\kappa-1}^{k-1}$ and $\phi_1(z)\ne o(1)$.
\end{enumerate}
In the case (b1)  by Theorem~\ref{thm:4.2} one obtains  $f\!\!\in\! \! \cM_{\kappa}^{k\!-\!1}(\mathbf{s},2n\!-\!3)$ which gives (2) with $r\!=\!0$.

In the case (b2)  by Theorem~\ref{thm:4.2}  $f\in  \cM_{\kappa-1}^{k-1}(\mathbf{s},2n-3)\subset\cM_{\kappa-1}^{k-1}(\mathbf{s},2n-4)$ which gives ({ i}) with $r=1$.

In the case (b3) one should continue this process based on Lemma~\ref{lem:W_m2}, Lemma~\ref{lem:W_l2}, Theorem~\ref{thm:4.2} and Theorem~\ref{thm:DescrOdd}.

If, for instance, $k\le\kappa$ and if the process will not stop until the step  $r=k$, then when  applying Lemma~\ref{lem:W_l2} to $\phi_k\in{\mathbf N}_{\kappa-k}^{0}$ one obtains only one possibility: $\tau_{k+1}\in{\mathbf N}_{\kappa-k}^{0}$ and $\tau_{k+1}(z)^{-1}=o(z)$ as $z\widehat{\to}\infty$. Then the process stops and  $f\in  \cM_{\kappa-k}^{0}(\mathbf{s},2(n-k)-2)$   by Theorem~\ref{thm:DescrOdd}.

Similarly, one can treat the case $\kappa<k$ by using Lemma~\ref{lem:W_m2}  and Theorem~\ref{thm:4.2}.
\end{proof}


\section{Full indefinite moment problem $MP_{\kappa}^{k}(\textbf{s} )$.}
\subsection{Indefinite moment problem $MP_{\kappa}^{k}(\textbf{s} )$  for  ${\mathbf s}\in\cH_0^0$.}\label{sec:4.1}

 Recall, that the moment problem $MP_{\kappa}^{k}({\mathbf s})$ is called {\it indeterminate} if it has more then one solution.
A sequence $\textbf{s}$ is called {\it nondegenerate}, if
\begin{equation}\label{eq:MP_nondeg}
     \mbox{there is $N\in\dN$}, \,\mbox{ such that }\,\quad D_n\ne 0, \quad D_n^+\ne 0\quad \mbox{for all \,\, $n\ge N$.}
\end{equation}

In this subsection we consider the indefinite moment problem $MP_{\kappa}^k(\textbf{s})$ for a  nondegenerate sequence ${\mathbf s}\in\cH_0^0$.
As is known, see~\cite[Theorem 0.5]{Akh}, for  a  nondegenerate sequence ${\mathbf s}\in\cH_0^0$ the corresponding moment problem $MP_{0}^0(\textbf{s})$ is indeterminate, if and only if
    \begin{equation}\label{eq:IndS}
        \sum_{i=1}^\infty m_i<\infty\quad\mbox{and}\quad\sum_{i=1}^\infty l_i<\infty.
    \end{equation}
\begin{theorem}\label{thm:Full_MP00}
  Let $\textbf{s}$ be a  nondegenerate sequence from $\cH_0^0$ and let \eqref{eq:IndS} holds.
  Then:
  \begin{enumerate}
    \item [({ i})] The moment problem $MP_{\kappa}^k(\textbf{s})$ is solvable and  indeterminate for any pair of $\kappa,k\in\dZ_+$.
    \item  [({ ii})]  The sequence of resolvent matrices $ W_{2n}(z)$ converges to an entire matrix valued function $W_{\infty}^+(z)=( w_{ij}^+(z))_{i,j=1}^2$ of order $\le$ 1/2.
   \item [({ iii})]  The formula
\begin{equation}\label{eq:MP_descrx}
f(z)=\frac{w_{11}^+(z)\tau(z)+w_{12}^+(z)}
    { w_{21}^+(z)\tau(z)+ w_{22}^+(z)}
\end{equation}
establishes a one-to-one correspondence between the class $\cM_{\kappa}^k(\textbf{s})$ and the set of functions $\tau\in \mathbf{N}_{\kappa}^{k}$.
\end{enumerate}
\end{theorem}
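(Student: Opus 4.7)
The plan is to lift the classical Stieltjes theorem ($\kappa=k=0$) to arbitrary indices by means of the linear fractional transformation generated by $W_\infty^+$. First I would prove (ii). Since $\mathbf{s}\in\cH_0^0$ is nondegenerate, the polynomials $m_j(z)$ in~\eqref{eq:ml_j} reduce to positive constants $m_j$, and $W_{2n}=M_1L_1\cdots M_nL_n\in\cU_0^0(J)$ by Theorem~\ref{thm:4.2A}. Iterating the difference system~\eqref{s4.3} in the form $|y_{2j-1}(z)|\le|y_{2j-3}(z)|+|z|m_j|y_{2j-2}(z)|$ and $|y_{2j}(z)|\le|y_{2j-2}(z)|+l_j|y_{2j-1}(z)|$ and invoking $\sum m_j<\infty$, $\sum l_j<\infty$ via Stieltjes' classical Gronwall-type estimate (cf.~\cite[Appendix]{Akh},~\cite{St94}), one obtains uniform bounds of the form $|P^+_{2j}(z)|,|Q^+_{2j}(z)|,\dots\le C\exp(c\sqrt{|z|})$ on compact sets. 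Passing to the limit yields entrywise convergence of $W_{2n}(z)$ to an entire matrix valued function $W_\infty^+(z)$ of order $\le 1/2$; normal convergence preserves membership in $\cU_0^0(J)$.

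For the forward direction of (iii), given $\tau\in\mathbf{N}_\kappa^k$ I would set $f:=T_{W_\infty^+}[\tau]$. Membership $f\in\mathbf{N}_\kappa^k$ follows from Lemma~\ref{lem:W} applied to $W_\infty^+\in\cU_0^0(J)$ in both directions, using that the inverse of $W_\infty^+$ is again $J$-inner of class $\cU_0^0(J)$. For the asymptotic expansion~\eqref{3p.2.3x}, I would pass to the limit in~\eqref{eq:W_reduceI} to obtain the factorization $W_\infty^+(z)=W_{2N}(z)W_\infty^{(N)+}(z)$, which is valid because the induced sequence $\mathbf{s}^{(N)}\in\cH_0^0$ inherits~\eqref{eq:IndS}. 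Writing $f_N:=T_{W_\infty^{(N)+}}[\tau]\in\mathbf{N}_\kappa^k$, we obtain $f=T_{W_{2N}}[f_N]$, and Theorem~\ref{thm:4.2} then places $f$ in $\cM_\kappa^k(\mathbf{s},2n_N-1)$ for every $N$; since $N$ is arbitrary, $f\in\cM_\kappa^k(\mathbf{s})$.

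For the backward direction, given $f\in\cM_\kappa^k(\mathbf{s})$, Theorem~\ref{thm:4.2} with $\kappa_N=k_N^+=0$ yields $f_N:=T_{W_{2N}^{-1}}[f]\in\mathbf{N}_\kappa^k$ for each $N$. Uniform convergence $W_{2N}^{-1}\to(W_\infty^+)^{-1}$ on compacta produces a limit $\tau=\lim_{N\to\infty}f_N\in\mathbf{N}_\kappa^k$ with $f=T_{W_\infty^+}[\tau]$, and injectivity of the map $\tau\mapsto f$ is immediate from invertibility of $W_\infty^+(z)$. Part~(i) is then a corollary: solvability is witnessed by any $\tau\in\mathbf{N}_\kappa^k$, and indeterminacy follows from the infinite cardinality of $\mathbf{N}_\kappa^k$ together with injectivity of the parametrization. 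The principal technical obstacle is verifying the hypothesis $f_N(z)=o(1)$ as $z\widehat{\rightarrow}\infty$ needed to apply Theorem~\ref{thm:4.2} in the forward direction; this amounts to balancing the order-$1/2$ growth of the entries of $W_\infty^{(N)+}$ against the generalized Nevanlinna asymptotics of $\tau$, and tracking precisely the cancellations inside the linear fractional transformation.
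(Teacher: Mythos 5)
Your overall architecture---reduce to the classical Stieltjes case via the factorization $W_\infty^+=W_{2N}W_\infty^{(N)}$ and transport the description through the linear fractional transformation---is the same as the paper's, and your direct Gronwall-type proof of (ii) is only cosmetically different from the paper's citation of the classical Kac--Kre\u{\i}n result. However, there are two genuine gaps. First, the claim that the inverse of $W_\infty^+$ is again of class $\cU_0^0(J)$ is false: from $J-W^{-1}JW^{-*}=-W^{-1}\left(J-WJW^*\right)W^{-*}$ the kernel \eqref{kerK} of $W^{-1}$ has the opposite sign, and already for a single factor $M_j^{-1}=\bigl(\begin{smallmatrix}1&0\\ zm_j&1\end{smallmatrix}\bigr)$ with $m_j>0$ one computes ${\mathsf K}^{M_j^{-1}}_\omega(z)=\mathrm{diag}(0,-m_j)$, which has one negative square, so $M_j^{-1}\in\cU_1(J)$. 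Hence you cannot conclude that $f=T_{W_\infty^+}[\tau]$ has index exactly $\kappa$ this way; Lemma~\ref{lem:W} only yields $\kappa'\le\kappa$, $k'\le k$. Likewise, in the backward direction a locally uniform limit of functions in $\mathbf{N}_\kappa^k$ need only lie in some $\mathbf{N}_{\kappa'}^{k'}$ with $\kappa'\le\kappa$, $k'\le k$, so asserting $\tau\in\mathbf{N}_\kappa^k$ for the limit is unjustified. The paper closes both issues by a mutual bootstrap: each direction can only decrease the indices, and since the index of a generalized Nevanlinna function is uniquely determined, both directions must in fact preserve it.

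Second, and more seriously, the step you yourself flag as ``the principal technical obstacle''---verifying that $\phi_N:=T_{W_\infty^{(N)}}[\tau]$ satisfies the asymptotic hypothesis \eqref{eq:Nev_phi1} required by Theorem~\ref{thm:4.2}---is left unresolved, and it is not an estimate-balancing issue: for a general parameter $\tau\in\mathbf{N}_\kappa^k$ the function $\phi_N$ genuinely may fail \eqref{eq:Nev_phi1}, no matter how the order-$1/2$ growth is tracked. The paper's resolution is Corollary~\ref{cor:fo0}, built on the case analyses of Lemmata~\ref{lem:W_m2} and~\ref{lem:W_l2}: for a resolvent matrix $W_{2n}\in\cU_0^0(J)$ and any $\phi\in\mathbf{N}_\kappa^k$ violating \eqref{eq:Nev_phi1}, peeling off at most $\min\{\kappa,k\}$ elementary factors $M_i$, $L_i$ restores the asymptotic condition at the cost of finitely many interpolation conditions and a controlled drop of indices, so that in particular $T_{W_{2n}}[\phi]=o(1)$ once $n>\min\{\kappa,k\}$. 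Without this device (or an equivalent one) the forward inclusion $T_{W_\infty^+}[\tau]\in\cM_\kappa^k(\textbf{s})$---and with it parts (i) and (iii)---remains unproved.
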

\begin{proof}
{\bf 1.} {\it Verification of ({ ii}):}
Due to~\cite[Appendix~II.13]{KacK68} the Stieltjes moment problem $\mathcal{M}_{0}^{0}({\mathbf {s}^{(N)}})$ is indeterminate if and only if \eqref{eq:IndS} holds
and in this case the sequence $W_{2N}(z)$ (see~\eqref{eq:ResM_2N})
of its resolvent matrices converges to an entire matrix valued function $W_\infty^+(z)=( w_{ij}^{+}(z))_{i,j=1}^2$ of order $\le$ 1/2.

{\bf 2.} {\it Verification of the implication $f\in\cM_\kappa^k(\textbf{s})\Longrightarrow \tau\in  \mathbf{N}_{\kappa}^{k}$ {$:$}} 
Since $f\in\cM_\kappa^k(\textbf{s})$ then $f\in\cM_\kappa^k(\textbf{s},2n-1)$ for every $n\in\dN$.
By Theorem~\ref{thm:4.2} there exists a sequence of functions $\phi_N(z)\in \mathbf{N}_{\kappa}^{k}$ such that \eqref{eq:Nev_phi1} holds and
\begin{equation}\label{eq:fTW}
  f(z)=T_{W_{2N}(z)}[\phi_N(z)]\quad\textup{for every $N\in\dN$.}
\end{equation}
It follows from~\eqref{eq:fTW} that
{
\[
-(Q^{+}_{2N-1}(z)-f(z)P^{+}_{2N-1}(z))\phi_N(z)=Q^{+}_{2N}(z)-f(z)P^{+}_{2N}(z).
\]
Notice that $Q^{+}_{2N-1}(z)-f(z)P^{+}_{2N-1}(z)\not\equiv 0$, since otherwise we would have
$Q^{+}_{2N}(z)-f(z)P^{+}_{2N}(z)\equiv 0$, which contradicts the
 generalized Liouville-Ostrogradsky identity \eqref{6p.eq:gLN1}. 
}

As was mentioned above the matrix valued functions $W_{{2N}}(z)$ converges locally uniformly in $\dC$ to  $W_\infty^+(z)=(w_{ij}^+(z))_{i,j=1}^2$, in particular,
\[
w_{11}^+(z)=\lim_{N\to\infty}Q^{+}_{{2N}-1}(z),\quad
w_{21}^+(z)=\lim_{N\to\infty}P^{+}_{{2N}-1}(z).
\]
By Corollary~\ref{cor:AC} $\frac{Q^{+}_{{2N}-1}}{P^{+}_{{2N}-1}}\in \mathbf{N}_{0}^{0}=S$. Hence the limiting function $\frac{w_{11}^+(z)}{w_{21}^+(z)}$ belongs to $\mathbf{N}_{0}^{0}=S$ and it corresponds to $\phi=\infty$ in the linear fractional transformation $f=T_{W_\infty^+}[\tau]$. Apart from this case  the function
\begin{equation}\label{eq:DenomTau}
  w_{11}^+(z)-f(z)w_{21}^+(z)=\lim_{N\to\infty}(Q^{+}_{{2N}-1}(z)-f(z)P^{+}_{{2N}-1}(z))
\end{equation}
is not identically equal to 0. Let $\Omega$ be the open set of points in $\dC_+$ such that $w_{11}^+(z)-f(z)w_{21}^+(z)\ne 0$. Then for every point $z\in\Omega$ the sequence of functions
{
\[
\phi_N(z):=\frac{Q^{+}_{{2N}}(z)-f(z)P^{+}_{{2N}}(z)}{Q^{+}_{{2N}-1}(z)-f(z)P^{+}_{{2N}-1}(z)}
\]}
is correctly defined in a neighborhood of $z$ and converges locally uniformly in $\Omega$ to a function $\phi(z)$. Since $\phi_N\in \mathbf{N}_{\kappa}^{k}$ then $\phi\in \mathbf{N}_{\kappa'}^{k'}$ with $\kappa'\le\kappa$ and $k'\le k$.
It follows from~\eqref{eq:fTW} that
\begin{equation}\label{eq:fTWx}
  f(z)=T_{W_\infty^+(z)}[\phi(z)]
\end{equation}
and by Lemma~\ref{lem:W} $f\in \mathbf{N}_{\kappa''}^{k''}$ with $\kappa''\le\kappa'\le\kappa$ and $k''\le k'\le k$.
Since $f\in \mathbf{N}_{\kappa}^{k}$ this implies $\kappa''=\kappa'=\kappa$ and $k''= k'= k$ and hence $\tau\in \mathbf{N}_{\kappa}^{k}$.

{\bf 3.} {\it
{ Proof
of the fact that}  $T_{W_\infty^+}[\tau]$ satisfies \eqref{eq:Nev_phi1} for every $\tau\in \mathbf{N}_\kappa^k$, $\kappa,k\in\dZ_+$.}
Application of the Schur algorithm to the Stieltjes moment problem $\mathcal{M}_{\kappa}^{k}({\mathbf s})$
gives on the $N-$th  step an induced sequence ${\mathbf {s}^{(N)}}$, which  can be found as the sequence of  coefficients of the series expansion $-{\sum\limits_{i=0}^\infty{s_i^{(N)}}{z^{-(i+1)}}}$ corresponding to the continued fraction
 \eqref{s8.0}.
Denote by $W^{(N)}_{2j}(z)$ the resolvent matrix of the corresponding Stieltjes moment problem $\mathcal{M}_{0}^{0}({\mathbf {s}^{(N)}},2j-1)$.
\begin{equation}\label{eq:ResNN}
  W^{(N)}_{2j}(z)=
      \begin{pmatrix}
        Q^{N,+}_{2j-1}(z) & Q^{N,+}_{2j}(z) \\
        P^{N,+}_{2j-1}(z) & P^{N,+}_{2j}(z) \\
    \end{pmatrix},\quad j\in\dN.
\end{equation}
Then for every $j\in\dN$ the resolvent matrix $W_{2j}(z)$ admits the factorization (see~\cite[Proposition 4.7]{DK17})
\begin{equation}\label{eq:Res_jN}
W_{2j}(z)=W_{{2N}}(z)W_{2(j-N)}^{(N)}(z).
\end{equation}
Taking the limit as $j\to\infty$ one obtains the following factorization of the entire matrix valued function $W_\infty^+(z)$
\begin{equation}\label{eq:Winf_fact}
  W_\infty^+(z)=W_{{2N}}(z)W_\infty^{(N)}(z),
\end{equation}
where $W_\infty^{(N)}(z)$ is the resolvent matrix of the induced Stieltjes moment problem $\mathcal{M}_{0}^{0}({\mathbf {s}^{(N)}})$.
Then $f=T_{W_\infty^+}[\tau]$ admits the reresentation
\begin{equation}\label{eq:f_rep}
  f=T_{W_{{2N}}}[\phi_N],\quad\mbox{where}\quad \phi_N=T_{W_\infty^{(N)}}[\tau]\in \mathbf{N}_{\kappa'}^{k'},\,\,
\mbox{with $\kappa'\le \kappa$ and $k'\le k$}.
\end{equation}
Since ${W_{{2N}}}\in\cU_0^0$  by Corollary~\ref{cor:fo0} one obtains $f=o(1)$, if $N>\mbox{min }\{\kappa,k\}$.

{\bf 4.} {\it
Verification of the implication $ \tau\in \mathbf{N}_{\kappa}^{k}\Longrightarrow f=T_{W_\infty^+}[\tau]\in\cM_\kappa^k(\textbf{s})$:}
Due to the above item~({ iii}) the function $\phi_N$ from \eqref{eq:f_rep} satisfies the condition~\eqref{eq:Nev_phi1}.

Next By Theorem~\ref{thm:4.2} $f\in\cM_{\kappa'}^{k'}(\mathbf{s},2N-1)$. Since $N$ can be chosen arbitrarily large  $f\in\cM_{\kappa'}^{k'}(\mathbf{s})$. Now it follows from item $\mathbf{2}$ that $\tau\in \mathbf{N}_{\kappa'}^{k'}$ and hence $\kappa'=\kappa$ and $k'=k$.

{\bf 5.} {\it Verification of ({ i}):} Solvability of the indefinite Stieltjes moment problem $MP_{\kappa}^k(\textbf{s})$  for any pair of $\kappa,k\in\dZ_+$ follows from item $\mathbf{4}$. 
The formula
\eqref{eq:MP_descrx}  gives two different solutions of the problem  $MP_{\kappa}^k(\textbf{s})$ for different parameter functions $\tau_1,\tau_2\in \mathbf{N}_\kappa^k$ and thus the problem $MP_{\kappa}^k(\textbf{s})$ is indeterminate.
\end{proof}



\subsection{Indefinite moment problem $MP_{\kappa}^{k}(\textbf{s} )$, general case.}


\begin{theorem}\label{thm:Full_MP}
  Let $\textbf{s}$ be a  nondegenerate sequence from $\cH_{\kappa_0}^{k_0,reg}$, $\kappa_0,k_0\in\dN$ and  let ${l}_j$ and $m_j(z)$ ($j\in\mathbb{N}$) be parameters  of the generalized $\mathbf{S}-$fraction (\ref{s4.2}). Then the moment problem $MP_{\kappa_0}^{k_0}(\textbf{s})$  is indeterminate, if and only if
      \begin{equation}\label{eq:ldlsa1}
        \sum_{i=1}^\infty m_i(0)<\infty\quad\mbox{and}\quad\sum_{i=1}^\infty l_i<\infty.
    \end{equation}
If \eqref{eq:ldlsa1} holds, then:
\begin{enumerate}
  \item [(i)] The sequence of resolvent matrices $ W_{2n}(z)$ converges to an entire matrix valued function $W_{\infty}^+(z)=( w_{ij}^+(z))_{i,j=1}^2$ of order $\le$ 1/2.
   \item [(ii)] The moment problem $MP_{\kappa}^k(\textbf{s})$  is solvable, 
   if and only if
\begin{equation}\label{eq:MPkk_Solv}
  \kappa_0\le\kappa,\quad\mbox{and}\quad k_0\le k.
\end{equation}
   \item [(iii)]  The formula
\begin{equation}\label{eq:MP_descrxA}
f(z)=\frac{w_{11}^+(z)\tau(z)+w_{12}^+(z)}
    { w_{21}^+(z)\tau(z)+ w_{22}^+(z)}
\end{equation}
establishes a one-to-one correspondence between the class {$\cM_{\kappa}^k(\textbf{s})$} and the set of functions $\tau\in \mathbf{N}_{\kappa-\kappa_0}^{k-k_0}$.
\end{enumerate}
\end{theorem}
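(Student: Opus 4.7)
The plan is to reduce the indefinite problem to the classical (definite) one treated in Theorem~\ref{thm:Full_MP00} by applying the Schur algorithm enough times to ``pass'' the indefiniteness, and then to lift the conclusions back through the factorization formula \eqref{eq:Res_jN}. Since $\mathbf{s}\in\cH_{\kappa_0}^{k_0,reg}$ and $\mathbf{s}$ is nondegenerate, there exists $N_0\in\dN$ such that for every $N\ge N_0$ one has $\kappa_N=\kappa_0$ and $k_N^+=k_0$ and, moreover, the normal indices of the induced sequence $\mathbf{s}^{(N)}$ are consecutive positive integers. Fix such an $N$. By Section~\ref{sec:2.2} (in particular the formula \eqref{s8.0} together with Theorem~\ref{thm:5.1} and Corollary~\ref{l.2.6}), the parameters of the generalized $\mathbf{S}$-fraction associated with $\mathbf{s}^{(N)}$ are precisely the tail $\{m_{N+j}(z),l_{N+j}\}_{j\ge 1}$ of the parameters of the $\mathbf{S}$-fraction for $\mathbf{s}$; moreover these tail polynomials $m_{N+j}$ are constants equal to $m_{N+j}(0)$, so $\mathbf{s}^{(N)}\in\cH_0^0$ is a nondegenerate classical Stieltjes sequence.

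With this reduction in hand the indeterminacy criterion is immediate: by Theorem~\ref{thm:Full_MP00} the problem $MP_{\kappa_0}^{k_0}(\mathbf{s})$ is indeterminate iff the induced problem $MP_0^0(\mathbf{s}^{(N)})$ is indeterminate iff $\sum_{j\ge 1}m_{N+j}(0)<\infty$ and $\sum_{j\ge 1}l_{N+j}<\infty$, which in view of the finiteness of the first $N$ terms is equivalent to \eqref{eq:ldlsa1}. Assume henceforth that \eqref{eq:ldlsa1} holds. Then by Theorem~\ref{thm:Full_MP00}(ii) the sequence $W^{(N)}_{2(j-N)}(z)$ converges locally uniformly on $\dC$ to an entire matrix valued function $W_\infty^{(N)}(z)$ of order $\le 1/2$. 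Combined with \eqref{eq:Res_jN} and the fact that $W_{2N}(z)$ is a fixed matrix of polynomials (hence entire of order $0$), we obtain
\[
W_\infty^+(z)=W_{2N}(z)\,W_\infty^{(N)}(z),
\]
which is entire of order $\le 1/2$; this proves (i).

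For (ii) and (iii) I would use Lemma~\ref{lem:2.4}, which gives the equivalence
\[
f\in\cM_\kappa^k(\mathbf{s})\Longleftrightarrow f_N\in\cM_{\kappa-\kappa_0}^{k-k_0}(\mathbf{s}^{(N)}),\qquad f=T_{W_{2N}(z)}[f_N].
\]
If $\kappa<\kappa_0$ or $k<k_0$ the right-hand class is empty, so $\cM_\kappa^k(\mathbf{s})=\emptyset$; conversely, if \eqref{eq:MPkk_Solv} holds, then by Theorem~\ref{thm:Full_MP00}(iii) applied to $\mathbf{s}^{(N)}$ the set $\cM_{\kappa-\kappa_0}^{k-k_0}(\mathbf{s}^{(N)})$ is parametrized by $f_N=T_{W_\infty^{(N)}}[\tau]$ with $\tau\in\mathbf{N}_{\kappa-\kappa_0}^{k-k_0}$. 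Substituting into $f=T_{W_{2N}}[f_N]$ and using $W_\infty^+=W_{2N}W_\infty^{(N)}$ and the composition rule $T_{W_1}\circ T_{W_2}=T_{W_1W_2}$ one obtains $f=T_{W_\infty^+}[\tau]$, which is \eqref{eq:MP_descrxA}. Injectivity of this correspondence follows from the injectivity of $\tau\mapsto f_N$ (Theorem~\ref{thm:Full_MP00}) and of $f_N\mapsto f$ (the latter being a M\"obius transformation by a matrix with nonzero determinant, by the generalized Liouville--Ostrogradsky identity~\eqref{6p.eq:gLN1}).

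The main obstacle I anticipate is the bookkeeping of indices in step (iii): one must verify that every solution $f\in\cM_\kappa^k(\mathbf{s})$ really corresponds to a parameter $\tau$ lying in the full class $\mathbf{N}_{\kappa-\kappa_0}^{k-k_0}$ and not in some strictly smaller subclass. This requires combining the inclusion obtained from Lemma~\ref{lem:W} applied to $W_{2N}\in\cU_{\kappa_0}^{k_0}(J)$ (which gives only the upper bound $\kappa'\le\kappa-\kappa_0$, $k'\le k-k_0$ on the indices of $f_N$) with the reverse inequality coming from the hypothesis $f\in\mathbf{N}_\kappa^k$, exactly as was done in Step~2 of the proof of Theorem~\ref{thm:Full_MP00}. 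Once this is established, both (ii) and (iii) follow, and the proof is complete.
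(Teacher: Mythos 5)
Your proposal is correct and follows essentially the same route as the paper: choose $N$ so that the induced sequence $\mathbf{s}^{(N)}$ lies in $\cH_0^0$, invoke Theorem~\ref{thm:Full_MP00} for the classical problem, and transport indeterminacy, convergence of $W_{2n}$, solvability and the parametrization back through the factorization $W_{\infty}^{+}=W_{2N}W_{\infty}^{(N)}$ together with Lemma~\ref{lem:2.4}. The only cosmetic difference is that you argue necessity of \eqref{eq:MPkk_Solv} via emptiness of $\cM_{\kappa-\kappa_0}^{k-k_0}(\mathbf{s}^{(N)})$ for negative indices, whereas the paper cites Proposition~\ref{prop:2.1}(v); both are fine, and the index bookkeeping you flag as the main obstacle is exactly what Theorem~\ref{thm:Full_MP00}(iii) already supplies.
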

\begin{proof}
{\bf 1.} {\it Redaction of the indefinite moment problem $MP_{\kappa}^k(\textbf{s})$ to a classical one:}
Let us choose $N$ big enough, so that
\begin{equation}\label{choiceN}
  \nu_-(S_j)=\kappa_0=\nu_-(S_{n_N}),\quad
\nu_-(S_j^{(1)})=k_0=\nu_-(S_{n_N}^{(1)})\quad\mbox{for all}\quad j\ge n_N.
\end{equation}
 Then the induced sequence ${\mathbf {s}^{(N)}}$ which arises on the $N-$th step of the Schur algorithm (see~Section~\ref{sec:2.2}) belongs to the class $\cH_0^0$.
The corresponding Stieltjes moment problem $\mathcal{M}_{0}^{0}({\mathbf {s}^{(N)}})$ is classical.
By Lemma~\ref{lem:2.4}
\begin{equation}\label{eq:InducedMP_2}
  f\in\cM_\kappa^k(\mathbf{s}) \Longleftrightarrow  f_N=T_{W_{2n_N}}[f]\in\cM_{\kappa-\kappa_0}^{k-k_0}(\mathbf{s}^{(N)}).
\end{equation}
In particular,
\begin{equation}\label{eq:InducedMP_2A}
  f\in\cM_{\kappa_0}^{k_0}(\mathbf{s}) \Longleftrightarrow  { f_N=T_{W_{2n_N}}[f]\in\cM_{0}^{0}(\mathbf{s}^{(N)}).}
\end{equation}
Hence  the problem $\mathcal{M}_{\kappa_0}^{k_0}({\mathbf {s}^{(N)}})$ is indeterminate if and only if  the Stieltjes moment problem $\mathcal{M}_{0}^{0}({\mathbf {s}^{(N)}})$ is indeterminate. The latter is equivalent to~\eqref{eq:ldlsa1}.

{\bf 2.} {\it Verification of (i):}
Let $P^{N,+}_{j}(z)$ and $Q^{N,+}_{j}(z)$ be generalized  Stieltjes polynomials associated with the moment problem $\mathcal{M}_{0}^{0}({\mathbf {s}^{(N)}})$ and let $W^{(N)}_{2j}$ (see~\eqref{eq:ResNN}).
If ~\eqref{eq:ldlsa1} holds, then
the Stieltjes moment problem $\mathcal{M}_{0}^{0}({\mathbf {s}^{(N)}})$ is indeterminate
and the sequence
$  W^{(N)}_{2j}$ converges to an entire matrix valued function $W^{(N)}_\infty(z)=( w_{ij}^{(N)}(z))_{i,j=1}^2$ of order $\le$ 1/2.

The resolvent matrices $W_{2j}(z)$ of the indefinite moment problem $MP_{\kappa_0}^{k_0}(\textbf{s}, 2j-1)$ are connected with the resolvent matrices $W^{(N)}_{2(j-N)}(z)$ of the induced moment problem $MP_{0}^{0}(\textbf{s}^{(N)}, 2(j-N)-1)$  by the formula~\eqref{eq:Res_jN}.
Therefore, the sequence of matrix valued functions $W_{2n-1}(z)$ also converges to an entire matrix valued function $W_\infty^+(z)=( w_{ij}^+(z))_{i,j=1}^2$ of order $\le$ 1/2, which is connected with $W_\infty^{(N)}(z)$  by the formula~\eqref{eq:Winf_fact}.
%

{\bf 3.} {\it Verification of (ii) and (iii) :}
By Proposition~\ref{prop:2.1} (v) the inequalities~\eqref{eq:MPkk_Solv} are necessary fo solvability of the problem $\cM_\kappa^k(\textbf{s})$.

Now assume that \eqref{eq:MPkk_Solv} holds, let  $\tau\in \mathbf{N}_{\kappa-\kappa_0}^{k-k_0}$ and let $f=T_{W_\infty^+}[\tau]$. Then by Theorem~\ref{thm:Full_MP00}
\begin{equation}\label{eq:f_N}
  f_N=T_{W_\infty^{(N)}}[\tau]\in\cM_{\kappa-\kappa_0}^{k-k_0}(\mathbf{s}^{(N)}).
\end{equation}
By \eqref{eq:Winf_fact} and Lemma~\ref{lem:2.4}
\begin{equation}\label{eq:ff_N}
  f=T_{W_{2N}}[f_N]\in\cM_{\kappa}^{k}(\mathbf{s}).
\end{equation}

Conversely, let $f\in\cM_{\kappa}^{k}(\mathbf{s})$. Then by  Lemma~\ref{lem:2.4} there is a function $f_N\in\cM_{\kappa-\kappa_0}^{k-k_0}(\mathbf{s}^{(N)})$ such that $ f=T_{W_{2N}}[f_N]$. Hence by Theorem~\ref{thm:Full_MP00} there exists a function $\tau\in \mathbf{N}_{\kappa-\kappa_0}^{k-k_0}$, such that $ f_N=T_{W_\infty^{(N)}}[\tau]$. Therefore,
$ f=T_{W_\infty^{+}}[\tau]$ for $\tau\in \mathbf{N}_{\kappa-\kappa_0}^{k-k_0}$.
\end{proof}


\begin{proposition}\label{prop:def}
  Let $\textbf{s}=\left\{s_{j}\right\}_{j=0}^{ \infty}\in \cH_\kappa^{k,reg}$, let $P_i(z)$  and $Q_i(z)$ ($i\in\dZ_+$)  be Lanczos polynomials of the first and second kind and let ${l}_j$ and $m_j(z)$ ($j\in\mathbb{N}$) be parameters  of the generalized $\mathbf{S}-$fraction (\ref{s4.2}). Assume additionally that
  \begin{equation}\label{eq:l>0}
    l_j>0\quad\mbox{for all\,\, $j\in\dN$}.
  \end{equation}
  Then the following statements are equivalent:
  \begin{enumerate}
    \item [(i)] the moment problem $MP_{\kappa}(\textbf{s})$  is indeterminate;
    \item [(ii)]  the following series converge
    \begin{equation}\label{eq:M<infty}
    \sum_{i=0}^\infty |P_i(0)|^2\wt b_i^{-1}<\infty\quad\mbox{ and }\quad\sum_{i=0}^\infty |Q_i(0)|^2\wt b_i^{-1}<\infty;\end{equation}
    \item [(iii)]  the following series converges
  \begin{equation}\label{eq:M<infty.bf}
 \sum_{i=1}^\infty (l_1+l_2+\cdots+l_i)^2d_{i+1}(0)<\infty.
    \end{equation}
  \end{enumerate}
\end{proposition}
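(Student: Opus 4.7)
The plan is to dispatch (ii) $\Leftrightarrow$ (iii) by a direct algebraic computation using Lemma~\ref{6p.lem_l_sat} and the identity~\eqref{6p.P(0)_2}, and then to derive (i) $\Leftrightarrow$ (ii) by reducing the indefinite Hamburger-type problem $MP_\kappa(\mathbf{s})$ to a classical Hamburger problem on the induced sequence $\mathbf{s}^{(N)}\in\cH_0^{0,reg}$ (for $N$ large enough) and invoking the classical Hamburger indeterminacy criterion evaluated at $z=0$.

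\textbf{Equivalence (ii) $\Leftrightarrow$ (iii).} Set $L_i:=l_1+\cdots+l_i$. The identity~\eqref{6p.P(0)_2} gives $d_{i+1}=|P_i(0)|^2\widetilde{b}_i^{-1}$, so the first series in (ii) is $\sum_{i\ge 0}d_{i+1}$. The first identity in~\eqref{6p.eq:cor.l1x} yields $Q_i(0)=-L_iP_i(0)$, hence $|Q_i(0)|^2\widetilde{b}_i^{-1}=L_i^2 d_{i+1}$, so the second series in (ii) coincides with the series in (iii). The implication (ii) $\Rightarrow$ (iii) is therefore automatic; conversely, the hypothesis $l_j>0$ implies $L_i\ge l_1>0$, so $\sum L_i^2 d_{i+1}\ge l_1^2\sum d_{i+1}$, whence (iii) forces convergence of the first series in (ii).

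\textbf{Equivalence (i) $\Leftrightarrow$ (ii).} I would choose $N$ large enough that $\mathbf{s}^{(N)}\in\cH_0^{0,reg}$. The hypothesis $l_j>0$ forces $\kappa_-(zl_j)=0$ for every $j$, so each $L_j\in\cU_0^0(J)$ is neutral in both indices and by Theorem~\ref{thm:4.2A} one has $W_{2N}\in\cU_{\kappa_N}^{k_N}(J)$ with $k_N=k_N^+$. The transformation $f=T_{W_{2N}}[f_N]$ then provides a bijection $\cM_\kappa(\mathbf{s})\leftrightarrow\cM_0(\mathbf{s}^{(N)})$ parallel to Lemma~\ref{lem:2.4} but tailored to the $MP_\kappa$ setting, so that $MP_\kappa(\mathbf{s})$ is indeterminate iff the classical Hamburger problem $MP_0(\mathbf{s}^{(N)})$ is. By the classical Hamburger indeterminacy criterion evaluated at $z=0$ (legitimate because $P_n^{(N)}(0)\ne 0$ for all $n$ by regularity), the latter happens iff $\sum_n|P_n^{(N)}(0)|^2/\widetilde{b}_n^{(N)}<\infty$ and $\sum_n|Q_n^{(N)}(0)|^2/\widetilde{b}_n^{(N)}<\infty$. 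Since the atoms of $\mathbf{s}^{(N)}$ coincide with the tail atoms of $\mathbf{s}$, the three-term recurrence shows that $(P_{N+n}(0),Q_{N+n}(0))$ is an invertible linear image of $(P_n^{(N)}(0),Q_n^{(N)}(0))$ (invertibility coming from the Liouville--Ostrogradsky identity~\eqref{6p.eq:gLN1}), and $\widetilde{b}_{N+n}=\widetilde{b}_N\widetilde{b}_n^{(N)}$; the two pairs of series are therefore equivalent, closing the loop.

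\textbf{Main obstacle.} The delicate point is the reduction $\cM_\kappa(\mathbf{s})\leftrightarrow\cM_0(\mathbf{s}^{(N)})$ itself: the paper's Schur-algorithm machinery (Lemma~\ref{lem:2.4}, Theorem~\ref{thm:Full_MP}) is formulated for $\cM_\kappa^k\leftrightarrow\cM_0^0$ and relies on tracking both the $\kappa$- and the $k$-index, whereas in the present proposition only the $\kappa$-constraint is imposed on the solutions. The assumption $l_j>0$, which renders every $L_j$ neutral in both indices, is precisely what allows the $N$ Schur steps to be run without accumulating any defect in the auxiliary $k$-structure, so that the classical Hamburger criterion on $\mathbf{s}^{(N)}$ can be pulled back to the original sequence without loss.
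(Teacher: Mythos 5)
Your treatment of (ii) $\Leftrightarrow$ (iii) coincides with the paper's argument: the identity $Q_i(0)=-(l_1+\cdots+l_i)P_i(0)$ from \eqref{6p.eq:cor.l1x} together with $d_{i+1}=|P^2_{i}(0)|\wt b_i^{-1}$ from \eqref{6p.P(0)_2} identifies the series in (iii) with the second series in \eqref{eq:M<infty}, and the comparison $l_1^2\sum_i|P_i(0)|^2\wt b_i^{-1}\le\sum_i(l_1+\cdots+l_i)^2|P_i(0)|^2\wt b_i^{-1}$ (this is exactly where \eqref{eq:l>0} is used) recovers the first series. This half is correct and identical in substance to the paper.

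The divergence is in (i) $\Leftrightarrow$ (ii), and there your argument has a genuine gap. The paper does not prove this equivalence at all: it cites \cite{DD07} and \cite{KL79}. You instead propose a reduction to the classical Hamburger problem on $\mathbf{s}^{(N)}$, but the central step --- the bijection $\cM_\kappa(\mathbf{s})\leftrightarrow\cM_0(\mathbf{s}^{(N)})$ --- is asserted, not proved, and it is not available from the paper's machinery. Lemma~\ref{lem:2.4} and Theorem~\ref{thm:Full_MP} establish the correspondence only for the Stieltjes classes $\cM_\kappa^k$; a solution of $MP_\kappa(\mathbf{s})$ need not lie in $\cM_\kappa^k(\mathbf{s})$ for any finite $k$ (for $f\in\mathbf{N}_\kappa$ the function $zf$ need not belong to any class $\mathbf{N}_k$), so the Hamburger correspondence cannot be assembled from the Stieltjes ones. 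Moreover, your heuristic for why the reduction should work --- that $l_j>0$ keeps each factor $L_j$ ``neutral'' so no defect accumulates in the $k$-index --- points in the wrong direction: the correct reduction for the classes $\cM_\kappa(\mathbf{s},\cdot)$ is carried out through the $P$-fraction \eqref{eq:Pfrac} and the Lanczos polynomials $P_j$, $Q_j$ (this is the content of \cite{Der03} and \cite{DD04}, and it requires no positivity of the $l_j$), not through the $S$-fraction factorization. The final step --- transporting the convergence of the two series from $(P^{(N)}_n(0),Q^{(N)}_n(0))$ back to $(P_{N+n}(0),Q_{N+n}(0))$ via the transfer matrices and \eqref{6p.eq:gLN1} --- is workable but also only gestured at. To close the argument you should either cite \cite{DD07,KL79} for (i) $\Leftrightarrow$ (ii), as the paper does, or prove the Hamburger-class analogue of Lemma~\ref{lem:2.4} explicitly.
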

\begin{proof}
  By \cite{DD07,KL79} $(i)$ and $(ii)$ are equivalent.

  Let us show, that $(ii)$ and $(iii)$ are equivalent. By the first equality in \eqref{6p.eq:cor.l1x}
  \[
    Q_i(0)=-(l_1+l_2+\cdots+l_i)P_i(0).
\]
and hence by \eqref{6p.eq:cor.l1}
\[
\sum_{i=0}^N |Q_i(0)|^2\wt b_i^{-1}=\sum_{i=0}^N (l_1+l_2+\cdots+l_i)^2 |P_i(0)|^2\wt b_i^{-1}=
 \sum_{i=1}^N (l_1+l_2+\cdots+l_i)^2d_{i+1}(0).
\]
This proves the implication $(ii)\Rightarrow (iii)$.

Conversely, if {$\sum\limits_{i=0}^\infty |Q_i(0)|^2\wt b_i^{-1}<\infty$}, then the convergence of the series $\sum\limits_{i=0}^\infty |P_i(0)|^2\wt b_i^{-1}$ follows from the inequality
\[
l_1^2\sum_{i=0}^N  |P_i(0)|^2\wt b_i^{-1}\le \sum_{i=0}^N (l_1+l_2+\cdots+l_i)^2 |P_i(0)|^2\wt b_i^{-1}
{=\sum\limits_{i=0}^\infty |Q_i(0)|^2\wt b_i^{-1}}.
\]
This proves the implication $(iii)\Rightarrow (ii)$.
\end{proof}
\begin{remark}
  In the case $\kappa=k=0$ the above criterion for the Hamburger moment problem to be indeterminate is well known, see~\cite[Appendix, Theorem 0.5]{Akh}. When treating system~\eqref{s4.3} as a Stieltjes string with masses $m_j$ and lengthes $l_j$ one can consider series \eqref{eq:M<infty.bf} as "the moment of inertia" of the string.
\end{remark}

\section{Pad\'{e} approximants}\label{sec:5}
\begin{definition}\label{def:Pade}(\cite{BaGr86})
  The $[L/M]$ Pad\'{e} approximant for a formal power series
\begin{equation}\label{eq:FPS}
  -\sum_{j=0}^\infty \frac{s_j}{z^{j+1}}
\end{equation}
 is a ratio
\begin{equation}\label{eq:Pade_con1}
f^{[L/M]}({z})=\frac{A^{[L/M]}(1/{z})}{B^{[L/M]}(1/{z})}
\end{equation}
of polynomials $A^{[L/M]}$, $B^{[L/M]}$ of formal degree $L$, $M$,
respectively, such that $B^{[L/M]}(0)\ne 0$ and
\begin{equation}\label{eq:Pade_con}
f^{[L/M]}(z)+\sum_{j=0}^{L+M} \frac{s_j}{z^{j+1}}
=O\left(\frac{1}{z^{L+M+1}}\right),\quad z\wh{\to} \infty.
\end{equation}
\end{definition}

The $[n/n]$ Pad\'{e} approximant is called diagonal and the $[n/n-1]$ Pad\'{e} approximant  is called subdiagonal.
\begin{remark}\label{rem:Pade}
Notice, that  for diagonal Pad\'{e} approximants the representation~\eqref{eq:Pade_con1} is equivalent to the representation
\[
f^{[n/n]}({z})=\frac{z^nA^{[n/n]}(1/{z})}{z^nB^{[n/n]}(1/{z})}
\]
as a ratio, where the numerator $z^nA^{[n/n]}(1/{z})$ is a polynomial of formal degree $n$ and the denominator $z^nB^{[n/n]}(1/{z})$ is a polynomial of exact degree $n$.
For subdiagonal Pad\'{e} approximants the representation~\eqref{eq:Pade_con1} is equivalent to the representation
\[
f^{[n/n-1]}({z})=\frac{z^nA^{[n/n-1]}(1/{z})}{z^nB^{[n/n-1]}(1/{z})}
\]
as a ratio, where the numerator is a polynomial of formal degree $n$ and the denominator is a polynomial of exact degree $n$ vanishing at $0$.
\end{remark}
Explicit formula for diagonal Pad\'{e} approximants for sequences $\textbf{s}=\left\{s_{j}\right\}_{j=0}^{ \infty}\in\cH_{\kappa}$ was found in~\cite{DD04}, in the classical case $\textbf{s}\in\cH_{0}$  see~\cite{BaGr86,Sim98}.
In this section we will formulate the corresponding statements for sequences $\textbf{s}=\left\{s_{j}\right\}_{j=0}^{ \infty}\in\cH_{\kappa}^{k,reg}$.
\begin{proposition}\label{prop:Diag_Pade}
Let $\textbf{s}=\left\{s_{j}\right\}_{j=0}^{ \infty}\in\cH_{\kappa}^{k,reg}$, $\kappa,k\in\dZ_+$.
Then the $[n/n]$ Pad\'{e} approximant for a formal power series~\eqref{eq:FPS} exists if $n\in\cN({\bf s})$ and
\begin{equation}\label{eq:Pade_con2}
f^{[n_j/n_j]}(z)=-\frac{Q_{j}(z)}{P_{j}(z)}=\frac{Q_{2j}^+(z)}{P_{2j}^+(z)},\quad j\in\dN.
\end{equation}
\end{proposition}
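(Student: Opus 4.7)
The plan has two parts, one for each equality in \eqref{eq:Pade_con2}.

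The second equality is immediate from Definition~\ref{def:St_Pol}. Formula~\eqref{2p.new8.r7} at the even index gives
\[
P^{+}_{2j}(z)=\frac{P_{j}(z)}{P_{j}(0)},\qquad Q^{+}_{2j}(z)=-\frac{Q_{j}(z)}{P_{j}(0)},
\]
and $P_{j}(0)\neq 0$ by the assumed regularity of $\textbf{s}$, so the common factor cancels and $Q^{+}_{2j}/P^{+}_{2j}=-Q_{j}/P_{j}$.

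For the first equality I would verify the three requirements of Definition~\ref{def:Pade} for the function $-Q_{j}(z)/P_{j}(z)$ with $L=M=n_{j}$, after which uniqueness of the $[n_j/n_j]$ Pad\'{e} approximant closes the argument. The degree bookkeeping is straightforward from the three-term recurrence~\eqref{6p.eq:3-termrecrel}: since $\deg a_{i}=\ell_{i}=n_{i+1}-n_{i}$, a short induction shows $\deg P_{j}=n_{j}$ (exactly) and $\deg Q_{j}\leq n_{j}-1$. Writing
\[
-\frac{Q_{j}(z)}{P_{j}(z)}=\frac{z^{n_{j}}\bigl(-Q_{j}(z)/z^{n_{j}}\bigr)}{z^{n_{j}}\bigl(P_{j}(z)/z^{n_{j}}\bigr)},
\]
one sees that both numerator and denominator are polynomials in $1/z$ of formal degree $n_{j}$, and the denominator has nonvanishing constant term (equal to the leading coefficient of $P_{j}$). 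By Remark~\ref{rem:Pade} this is precisely the shape prescribed for a diagonal Pad\'e approximant.

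The remaining condition \eqref{eq:Pade_con} reduces to the statement that the $j$-th convergent of the $P$-fraction~\eqref{eq:Pfrac} interpolates the formal series $-\sum_{i\ge 0} s_{i}/z^{i+1}$ through order $2n_{j}$, namely
\[
-\frac{Q_{j}(z)}{P_{j}(z)}+\sum_{i=0}^{2n_{j}-1}\frac{s_{i}}{z^{i+1}}=O\!\left(\frac{1}{z^{2n_{j}+1}}\right),\quad z\widehat{\rightarrow}\infty;
\]
adding the single extra term $s_{2n_{j}}/z^{2n_{j}+1}=O(1/z^{2n_j+1})$ to the left then produces \eqref{eq:Pade_con} with $L=M=n_{j}$. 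This interpolation property at a normal index is the classical fact going back to Kronecker~\cite{Kro81} and developed in~\cite{Mag62, Der03, DD04}; the cleanest route here is to extract it from the Frobenius determinantal representation of $P_{j}$ and $Q_{j}$, together with the Hankel orthogonality of $P_{j}$ against $1,z,\dots,z^{n_{j}-1}$ with respect to the moment functional determined by $\textbf{s}$.

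The only nontrivial ingredient is pinning down the precise interpolation order at the normal index $n_{j}$; the rest of the argument is a mechanical check against Definition~\ref{def:Pade} and Remark~\ref{rem:Pade}.
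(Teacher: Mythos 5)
Your proposal is correct, and its skeleton (the second equality from Definition~\ref{def:St_Pol} using $P_j(0)\ne 0$; exact degree $n_j$ for $P_j$ and $\deg Q_j\le n_j-1$ from the recurrence; the reduction to Remark~\ref{rem:Pade} plus the order-$2n_j$ asymptotic matching) coincides with the paper's. The one place where you genuinely diverge is the justification of the central interpolation property
$-Q_j/P_j+\sum_{i=0}^{2n_j-1}s_iz^{-(i+1)}=O\bigl(z^{-2n_j-1}\bigr)$:
you propose to derive it from the classical Kronecker/Frobenius theory of $P$-fraction convergents, via the determinantal representation of $P_j$, $Q_j$ and the Hankel orthogonality of $P_j$ against $1,z,\dots,z^{n_j-1}$. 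The paper instead gets this for free from its already-established machinery: $-Q_j/P_j=Q_{2j}^+/P_{2j}^+=T_{W_{2j}(z)}[0]$ is a solution of the truncated moment problem $MP(\mathbf{s},2n_j-1)$ by Theorem~\ref{thm:4.2}, hence has the asymptotic expansion \eqref{3p.2.3x} with $\ell=2n_j-1$. Your route is more elementary and self-contained in spirit (it does not need the resolvent-matrix description of solution sets), but as written it is only a sketch at the decisive step — "extract it from the Frobenius determinantal representation together with Hankel orthogonality" would need to be carried out, with some care at normal indices where intermediate Hankel determinants may vanish; the paper's route buys a one-line conclusion at the cost of invoking the heavier Theorem~\ref{thm:4.2}. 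Either way the degree bookkeeping and the appeal to Remark~\ref{rem:Pade} then close the argument exactly as you describe.
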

\begin{proof}
We present a proof of this statement from~\cite{DK17} for the convenience of the reader.
It follows from~\eqref{2p.new8.r7} and Theorem~\ref{thm:DescrOdd}  that the function
\[
-\frac{Q_{j}(z)}{P_{j}(z)}=\frac{Q_{2j}^+(z)}{P_{2j}^+(z)}=T_{W_{2j}(z)}[0]
\]
belongs to $\cM({\bf s},2n_j-1)$. Therefore,  the function $-\frac{Q_{j}(z)}{P_{j}(z)}$ has the asymptotic
\begin{equation}\label{eq:asymQP}
-\frac{Q_{j}(z)}{P_{j}(z)}=-\frac{s_0}{z}-\dots-\frac{s_{2n_j-1}}{z^{2n_j}}+O\left(\frac{1}{z^{2n_j+1}}\right),\quad z\wh{\to} \infty.
\end{equation}
{ Since $Q_j(z)$ is a polynomial of degree $n_j-n_1<n_j$ and $P_j(z)$ is a polynomial of exact degree $n_j$
the function
$-\frac{Q_{j}(z)}{P_{j}(z)}$ is the $[n_j/n_j]$ Pad\'{e} approximant for the formal power series~\eqref{eq:FPS} due to Remark~\ref{rem:Pade} and~\eqref{eq:asymQP}.}
\end{proof}

In the following proposition stated in~\cite{DK16} without proof it is shown that the sub-diagonal Pad\'{e} approximants can be calculated in terms of generalized Stieltjes polynomials.
\begin{proposition}\label{prop:sub_Diag_Pade}
Let $\textbf{s}=\left\{s_{j}\right\}_{j=0}^{ \infty}\in\cH_{\kappa}^{k,reg}$, $\kappa,k\in\dZ_+$.
Then the $[n_{j}/n_{j}-1]$  Pad\'{e} approximants for the formal power series~\eqref{eq:FPS} exists and has the form
\begin{equation}\label{eq:Pade_con3}
f^{[n_{j}/n_{j}-1]}(z)=\frac{Q_{2j-1}^+(z)}{P_{2j-1}^+(z)},\quad j\in\dN.
\end{equation}
\end{proposition}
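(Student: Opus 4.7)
The plan is to parallel the proof of Proposition~\ref{prop:Diag_Pade} (the diagonal Pad\'{e} case), substituting the odd-truncation machinery of Theorem~\ref{thm:DescrOdd} for the even-truncation machinery of Theorem~\ref{thm:4.2}, and appealing to the subdiagonal half of Remark~\ref{rem:Pade} in place of its diagonal half.

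First I would observe that $Q^{+}_{2j-1}/P^{+}_{2j-1}=T_{W_{2j-1}(z)}[\infty]$, which is just the $\tau\equiv\infty$ specialization of the linear-fractional formula~\eqref{2p.new6}; this is the same substitution that underlies Corollary~\ref{cor:AC}. Since $\tau\equiv\infty$ trivially satisfies $\tau(z)^{-1}=0=o(z)$ as $z\widehat\to\infty$, Theorem~\ref{thm:DescrOdd}(ii) places $Q^{+}_{2j-1}/P^{+}_{2j-1}$ in $\mathcal{M}_{\kappa_j}^{k_j}(\mathbf{s},2n_j-2)$, and the asymptotic expansion~\eqref{3p.2.3x} at level $\ell=2n_j-2$ yields
\[
\frac{Q^{+}_{2j-1}(z)}{P^{+}_{2j-1}(z)} = -\frac{s_0}{z}-\cdots-\frac{s_{2n_j-2}}{z^{2n_j-1}} + o\!\left(\frac{1}{z^{2n_j-1}}\right), \qquad z\widehat\to\infty.
\]

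Next I would verify the degree bookkeeping required by Remark~\ref{rem:Pade} in the subdiagonal case. From the determinantal formulas~\eqref{2p.new8.r7}: $P^{+}_{2j-1}(0)=0$ (two equal rows at $z=0$); the coefficient of $z^{n_j}$ in $P^{+}_{2j-1}$ equals $-P_{j-1}(0)/\widetilde b_{j-1}$, which is nonzero by regularity of $\mathbf{s}$, so $\deg P^{+}_{2j-1}=n_j$ exactly; and the three-term recurrence~\eqref{6p.eq:3-termrecrel} with the initial conditions~\eqref{system1.1} gives $\deg Q_i=n_i-n_1$, hence $\deg Q^{+}_{2j-1}\le n_j-n_1\le n_j-1$. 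This matches exactly the shape of a subdiagonal $[n_j/n_j-1]$ numerator/denominator pair described in Remark~\ref{rem:Pade}. Moreover, since $Q^{+}_{2j-1}/P^{+}_{2j-1}$ is rational with numerator degree strictly below that of the denominator, its Laurent expansion at $\infty$ converges absolutely near $\infty$, so the $o(1/z^{2n_j-1})$ remainder above is in fact $O(1/z^{2n_j})$. With $L=n_j$ and $M=n_j-1$, this is precisely the Pad\'{e} condition~\eqref{eq:Pade_con} at level $L+M+1=2n_j$, and Definition~\ref{def:Pade} then yields~\eqref{eq:Pade_con3}.

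The only genuinely delicate point I foresee is the legitimacy of the substitution $\tau\equiv\infty$ in Theorem~\ref{thm:DescrOdd}(ii). If one prefers to avoid reading it as a formal specialization (as Corollary~\ref{cor:AC} in effect does), I would replace it by a limiting argument: take constant Stieltjes parameters $\tau_m\equiv m>0$, each lying in $\mathbf{N}_0^0$ and satisfying $\tau_m^{-1}=1/m=o(z)$, apply Theorem~\ref{thm:DescrOdd} to $f_m=T_{W_{2j-1}}[\tau_m]\in\mathcal{M}_\kappa^k(\mathbf{s},2n_j-2)$, and pass to the limit $m\to\infty$; the convergence $f_m\to Q^{+}_{2j-1}/P^{+}_{2j-1}$ is locally uniform off the zeros of $P^{+}_{2j-1}$, so the asymptotic expansion transfers to the limit termwise.
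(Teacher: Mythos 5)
Your proof is correct and follows essentially the same route as the paper's: identify $Q^{+}_{2j-1}/P^{+}_{2j-1}=T_{W_{2j-1}(z)}[\infty]$, invoke Theorem~\ref{thm:DescrOdd} to place it in $\cM(\mathbf{s},2n_j-2)$ and obtain the asymptotic expansion, and then check the degree conditions of Remark~\ref{rem:Pade} (exact degree $n_j$ and vanishing at $0$ for $P^{+}_{2j-1}$, degree $\le n_j-1$ for $Q^{+}_{2j-1}$). Your explicit upgrade of the $o(1/z^{2n_j-1})$ remainder to $O(1/z^{2n_j})$ via rationality, and the observation that this already suffices for condition~\eqref{eq:Pade_con} with $L+M+1=2n_j$, is a point the paper's proof glosses over (its stated remainder $O(1/z^{2n_j+1})$ in \eqref{eq:asymQPsub} appears to be a slip), so this added care is welcome.
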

\begin{proof}
It follows from~\eqref{2p.new8.r7}   that
\[
\frac{Q_{2j-1}^+(z)}{P_{2j-1}^+(z)}=T_{W_{2j-1}(z)}[\infty].
\]
By Theorem~\ref{thm:DescrOdd}   the function $\frac{Q_{2j-1}^+(z)}{P_{2j-1}^+(z)}$ belongs to $\cM({\bf s},2n_{j}-2)$, and hence it has the asymptotic
\begin{equation}\label{eq:asymQPsub}
\frac{Q_{2j-1}^+(z)}{P_{2j-1}^+(z)}=-\frac{s_0}{z}-\dots-\frac{s_{2n_{j}-2}}{z^{2n_{j}-1}}+ O\left(\frac{1}{z^{2n_j+1}}\right),\quad z\wh{\to} \infty.
\end{equation}
{ Since $Q_{2j-1}^+(z)$ is a polynomial of degree $n_j-n_1<n_j$ and $P_{2j-1}^+(z)$ is a polynomial of exact degree $n_j$ vanishing at $0$
the function
$-\frac{Q_{2j-1}^+(z)}{P_{2j-1}^+(z)}$ is the $[n_j/n_j-1]$ Pad\'{e} approximant for the formal power series~\eqref{eq:FPS} due to Remark~\ref{rem:Pade} and~\eqref{eq:asymQPsub}.
}\end{proof}
{
\begin{lemma}\label{lem:Pade}
Let $\textbf{s}=\left\{s_{j}\right\}_{j=0}^{ \infty}\in\cH_{\kappa}^{k,reg}$, $\kappa,k\in\dZ_+$,   let $W_{2N}(z)$ be given by~\eqref{eq:ResM_2N},  $N\in\dN$ and let $\mathbf{s}^{(N)}$ be the induced sequence defined in Lemma~\ref{lem:2.4}, and let
\begin{equation}\label{eq:Ser2}
  -\sum_{j=0}^\infty {s_j^{(N)}}{z^{-(j+1)}}
\end{equation}
be the corresponding  formal power series. Then:
\begin{enumerate}
  \item [(i)]  the diagonal  Pad\'{e} approximants for the formal power series~\eqref{eq:FPS}
are connected with diagonal $g^{[n/n]}$  Pad\'{e} approximants for the power series~\eqref{eq:Ser2}
 by the formula
\begin{equation}\label{eq:Pade_N}
  f^{[n_{j}/n_{j}]}(z)=T_{W_{2N}(z)}[g^{[n_{j}-n_N/n_{j}-n_N]}(z)],\quad j>N,\, j\in\dN.
\end{equation}
  \item[(ii)] the subdiagonal Pad\'{e} approximants for the power series~\eqref{eq:FPS}
are connected with subdiagonal $g^{[n/n-1]}$ Pad\'{e} approximants for the power series
\eqref{eq:Ser2}
 by
\begin{equation}\label{eq:Pade_Nsub}
  f^{[n_{j}/n_{j}-1]}(z)=T_{W_{2N}(z)}[g^{[n_{j}-n_N/n_{j}-n_N-1]}(z)],\quad j>N,\, j\in\dN.
\end{equation}
\end{enumerate}
\end{lemma}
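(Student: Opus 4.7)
The plan is to combine the explicit formulae for Pad\'{e} approximants from Propositions~\ref{prop:Diag_Pade} and~\ref{prop:sub_Diag_Pade} with the multiplicative factorization of resolvent matrices in~\eqref{eq:Res_jN}, and with the fact that linear fractional transformations satisfy $T_{W_1W_2}=T_{W_1}\circ T_{W_2}$.

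First I would record the two key identities.  By Proposition~\ref{prop:Diag_Pade} applied to ${\mathbf s}$ and to the induced sequence ${\mathbf s}^{(N)}$ (whose normal indices are readily seen to be $\{n_j-n_N\}_{j>N}$, since the Schur step shifts the ordered set of normal indices by $n_N$),
\[
f^{[n_j/n_j]}(z)=\frac{Q^{+}_{2j}(z)}{P^{+}_{2j}(z)}=T_{W_{2j}(z)}[0],\qquad
g^{[n_j-n_N/n_j-n_N]}(z)=T_{W^{(N)}_{2(j-N)}(z)}[0].
\]
Similarly, by Proposition~\ref{prop:sub_Diag_Pade},
\[
f^{[n_j/n_j-1]}(z)=\frac{Q^{+}_{2j-1}(z)}{P^{+}_{2j-1}(z)}=T_{W_{2j-1}(z)}[\infty],\qquad
g^{[n_j-n_N/n_j-n_N-1]}(z)=T_{W^{(N)}_{2(j-N)-1}(z)}[\infty].
\]

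For part (i), I would invoke the factorization formula \eqref{eq:Res_jN}, namely $W_{2j}(z)=W_{2N}(z)\,W^{(N)}_{2(j-N)}(z)$, and compose the linear fractional transformations:
\[
f^{[n_j/n_j]}(z)=T_{W_{2j}(z)}[0]=T_{W_{2N}(z)}\bigl[T_{W^{(N)}_{2(j-N)}(z)}[0]\bigr]=T_{W_{2N}(z)}\bigl[g^{[n_j-n_N/n_j-n_N]}(z)\bigr].
\]
For part (ii), the analogous factorization $W_{2j-1}(z)=W_{2N}(z)\,W^{(N)}_{2(j-N)-1}(z)$ is what is needed.  This comes from the block factorizations of $W_{2N}(z)=M_1L_1\cdots M_NL_N$ and of $W_{2j-1}(z)=M_1L_1\cdots L_{j-1}M_j$ provided by Theorems~\ref{thm:4.2A} and~\ref{thm:DescrOdd}: splitting the product in $W_{2j-1}$ after the factor $L_N$ exactly produces $W_{2N}(z)$ on the left and the odd resolvent matrix associated with the induced sequence on the right.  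With this factorization in hand,
\[
f^{[n_j/n_j-1]}(z)=T_{W_{2j-1}(z)}[\infty]=T_{W_{2N}(z)}\bigl[T_{W^{(N)}_{2(j-N)-1}(z)}[\infty]\bigr]=T_{W_{2N}(z)}\bigl[g^{[n_j-n_N/n_j-n_N-1]}(z)\bigr].
\]

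The only non-trivial step is to establish (or extract) the odd factorization $W_{2j-1}(z)=W_{2N}(z)\,W^{(N)}_{2(j-N)-1}(z)$, which is not stated explicitly in the excerpt but follows immediately from the block structure of the generalized Stieltjes continued fraction after $N$ full steps.  Everything else is a routine consequence of the composition rule for linear fractional transformations together with Propositions~\ref{prop:Diag_Pade} and~\ref{prop:sub_Diag_Pade}; in particular one should verify that the parameter $0$ (respectively $\infty$) is preserved under the Schur step so that the Pad\'{e} approximants of the induced series are indeed given by $T_{W^{(N)}_{2m}}[0]$ and $T_{W^{(N)}_{2m-1}}[\infty]$, which is clear from the formulae for the generalized Stieltjes polynomials.
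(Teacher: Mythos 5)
Your proposal is correct and follows essentially the same route as the paper: both express the diagonal and subdiagonal Pad\'{e} approximants as $T_{W_{2j}}[0]$ and $T_{W_{2j-1}}[\infty]$ via Propositions~\ref{prop:Diag_Pade} and~\ref{prop:sub_Diag_Pade}, and then apply the factorizations $W_{2j}=W_{2N}W^{(N)}_{2(j-N)}$ and $W_{2j-1}=W_{2N}W^{(N)}_{2(j-N)-1}$ together with $T_{W_1W_2}=T_{W_1}\circ T_{W_2}$. Your remark that the odd factorization must be extracted from the block products of Theorems~\ref{thm:4.2A} and~\ref{thm:DescrOdd} by splitting after $L_N$ is exactly the justification the paper implicitly relies on.
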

\begin{proof}
Consider the induced moment problem $MP_{\kappa-\kappa_N}^{k-k_N^+}(\mathbf{s}^{(N)}, 2(n_j-n_N)-1)$ and let
\begin{equation}\label{eq:ResM_2j-N}
  W_{2(j-N)}^{(N)}(z)=\left(
                \begin{array}{cc}
                  Q^{N,+}_{2(j-N)-1}(z) & Q^{N,+}_{2(j-N)}(z) \\
                  P^{N,+}_{2(j-N)-1}(z) & P^{N,+}_{2(j-N)}(z) \\
                \end{array}
              \right)
\end{equation}
be the resolvent matrix of this moment problem. Then the matrices $  W_{2j}(z)$ and $W_{2(j-N)}^{(N)}(z)$ are connected by (see~\eqref{eq:Res_jN})
\begin{equation}\label{eq:W_fact_N}
  W_{2j}(z)=W_{2N}(z)W_{2(j-N)}^{(N)}(z).
\end{equation}
Similarly,  the resolvent matrix
\begin{equation}\label{eq:ResM_2j-N-1}
    W_{2(j-N)-1}^{(N)}(z)=\left(
                \begin{array}{cc}
                  Q^{N,+}_{2(j-N)-1}(z) & Q^{N,+}_{2(j-N)-2}(z) \\
                  P^{N,+}_{2(j-N)-1}(z) & P^{N,+}_{2(j-N)-2}(z) \\
                \end{array}
              \right)
\end{equation}
 of the moment problem $MP_{\kappa-\kappa_N}^{k-k_N}(\mathbf{s}^{(N)}, 2(n_j-n_N)-1)$ is connected with
the matrix $  W_{2j-1}(z)$ by
\begin{equation}\label{eq:W_fact_N-1}
  W_{2j-1}(z)=W_{2N}(z)W_{2(j-N)-1}^{(N)}(z).
\end{equation}

By Proposition~\ref{prop:Diag_Pade}  diagonal  Pad\'{e} approximants $g^{[n_{j}-n_N/n_{j}-n_N]}(z)$ for the formal power series~\eqref{eq:Ser2} are given by
\begin{equation}\label{eq:Pade_con2N}
g^{[n_{j}-n_N/n_{j}-n_N]}(z)=\frac{Q_{2(n_j-n_N)}^{N,+}(z)}{P_{2(n_j-n_N)}^{N,+}(z)}
=T_{W_{2(j-N)}^{(N)}(z)}[0],\quad j\in\dN.
\end{equation}
It follows from the factorization  formula~\eqref{eq:W_fact_N}
and \eqref{eq:Pade_con2N} that
\[
T_{W_{2N}(z)}[g^{[n_{j}-n_N/n_{j}-n_N]}(z)]=T_{W_{2j}(z)}[0].
\]
Hence by Proposition~\ref{prop:Diag_Pade} $T_{W_{2N}(z)}[g^{[n_{j}-n_N/n_{j}-n_N]}(z)]$ coincides with the diagonal  Pad\'{e} approximants $f^{[n_{j}/n_{j}]}(z)$ for the formal power series~\eqref{eq:FPS}.

By Proposition~\ref{prop:sub_Diag_Pade}  subdiagonal  Pad\'{e} approximants $g^{[n_{j}-n_N/n_{j}-n_N-1]}(z)$ for the formal power series~\eqref{eq:Ser2} are given by
\begin{equation}\label{eq:Pade_con2Nsub}
g^{[n_{j}-n_N/n_{j}-n_N-1]}(z)=\frac{Q_{2(n_j-n_N)-1}^{N,+}(z)}{P_{2(n_j-n_N)-1}^{N,+}(z)}
=T_{W_{2(j-N)-1}^{(N)}(z)}[\infty],\quad j\in\dN.
\end{equation}
It follows from the factorization  formula~\eqref{eq:W_fact_N-1}
and \eqref{eq:Pade_con2Nsub} that
\[
T_{W_{2N}(z)}[g^{[n_{j}-n_N/n_{j}-n_N]}(z)]=T_{W_{2j}(z)}[\infty].
\]
Hence by Proposition~\ref{prop:sub_Diag_Pade} $T_{W_{2N}(z)}[g^{[n_{j}-n_N/n_{j}-n_N-1]}(z)]$ coincides with the subdiagonal  Pad\'{e} approximants $f^{[n_{j}/n_{j}-1]}(z)$ for the formal power series~\eqref{eq:FPS}.
%
\end{proof}
}
\begin{theorem}\label{thm:Pade}
  Let $\textbf{s}=\left\{s_{j}\right\}_{j=0}^{ \infty}\in\cH_{\kappa}^{k,reg}$, $\kappa,k\in\dZ_+$. Then:
  \begin{enumerate}
    \item [(i)] If the problem $MP_\kappa^k(\mathbf{s})$ is determinate, then diagonal and  subdiagonal Pad\'{e} approximants converge to the unique solution of $MP_\kappa^k(\mathbf{s})$ locally uniformly on $\dC\setminus\dR_+$.
    \item [(ii)] If the problem $MP_\kappa^k(\mathbf{s})$ is indeterminate, then the sequence $f^{[n/n-1]}$ of subdiagonal Pad\'{e} approximants converges  locally uniformly on $\dC\setminus\dR_+$, while the sequence $f^{[n/n]}$ of diagonal Pad\'{e} approximants is not convergent but precompact in the topology of locally uniform convergence.
  \end{enumerate}
\end{theorem}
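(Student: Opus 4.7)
The plan is to transfer both statements to the classical Stieltjes setting using Lemma~\ref{lem:Pade}. Fix $N\in\dN$ large enough that the $N$-th induced sequence $\mathbf{s}^{(N)}$ belongs to $\cH_0^0$; this is possible by~\eqref{choiceN} because $\nu_-(S_n)$ and $\nu_-(S_n^+)$ stabilize at $\kappa$ and $k$ respectively. By Theorem~\ref{thm:Full_MP}, the indefinite problem $MP_\kappa^k(\mathbf{s})$ is (in)determinate precisely when the classical problem $MP_0^0(\mathbf{s}^{(N)})$ is. For every $j>N$, Lemma~\ref{lem:Pade} yields
\[
f^{[n_j/n_j]}(z)=T_{W_{2N}(z)}\bigl[g^{[n_j-n_N/n_j-n_N]}(z)\bigr],\quad f^{[n_j/n_j-1]}(z)=T_{W_{2N}(z)}\bigl[g^{[n_j-n_N/n_j-n_N-1]}(z)\bigr],
\]
where $g^{[m/m]}$ and $g^{[m/m-1]}$ denote the diagonal and subdiagonal Pad\'e approximants of the classical formal power series associated with $\mathbf{s}^{(N)}$.

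For part (i), the classical Stieltjes Pad\'e convergence theorem (see~\cite{BaGr86,Akh}) ensures that, in the determinate classical case, both $\{g^{[m/m]}\}$ and $\{g^{[m/m-1]}\}$ converge locally uniformly on $\dC\setminus\dR_+$ to the unique solution $g_\infty\in\mathbf{S}$ of $MP_0^0(\mathbf{s}^{(N)})$. Since $W_{2N}(z)$ is a matrix of polynomials with $\det W_{2N}(z)\equiv 1$ (by the identity~\eqref{6p.relfor STpol1}), the linear fractional transformation $T_{W_{2N}(z)}$ is continuous in the topology of locally uniform convergence outside the isolated pole set of its image. Hence $\{f^{[n/n]}\}$ and $\{f^{[n/n-1]}\}$ both converge locally uniformly on $\dC\setminus\dR_+$ to $T_{W_{2N}(z)}[g_\infty(z)]$, which by Lemma~\ref{lem:2.4} is the unique element of $\cM_\kappa^k(\mathbf{s})$.

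For part (ii), classical results for indeterminate Stieltjes series supply the following picture: the subdiagonal sequence $\{g^{[m/m-1]}\}$ still converges locally uniformly on $\dC\setminus\dR_+$ to a specific Nevanlinna solution of $MP_0^0(\mathbf{s}^{(N)})$, while the diagonal sequence $\{g^{[m/m]}\}$ only forms a normal (hence precompact) family of Stieltjes functions without an overall limit. Transferring these properties through $T_{W_{2N}(z)}$, which preserves both locally uniform convergence and normality on the relevant domain, yields locally uniform convergence of $\{f^{[n/n-1]}\}$ to an element of $\cM_\kappa^k(\mathbf{s})$ and precompactness without convergence of $\{f^{[n/n]}\}$.

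The principal obstacle is the invocation of the classical Pad\'e convergence theorem in the indeterminate case, in particular the assertion that the diagonal family fails to converge while remaining precompact; this relies on normality estimates for Herglotz--Nevanlinna functions together with the indeterminacy criterion of Theorem~\ref{thm:Full_MP00}. A secondary technical point is verifying that the continuity and the normality-preserving property of $T_{W_{2N}(z)}$ are valid on compacta of $\dC\setminus\dR_+$ avoiding the (at most finitely many) generalized poles of the limit, which follows from the polynomial nature of $W_{2N}(z)$ and the fact that $\det W_{2N}\equiv 1$.
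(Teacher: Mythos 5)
Your proposal is correct and follows essentially the same route as the paper: choose $N$ so that the induced sequence $\mathbf{s}^{(N)}$ lies in $\cH_0^0$, use Lemma~\ref{lem:Pade} (i.e.\ the factorization $W_{2j}=W_{2N}W^{(N)}_{2(j-N)}$) to transfer the diagonal and subdiagonal Pad\'e approximants to those of the classical Stieltjes problem $MP_0^0(\mathbf{s}^{(N)})$, and then invoke the classical convergence results (the paper cites Simon, Theorems~5.30--5.31). Your additional remarks on the continuity of $T_{W_{2N}}$ (via $\det W_{2N}\equiv 1$) and on the equivalence of determinacy merely make explicit what the paper's terser argument leaves implicit.
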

\begin{proof}
  Let us choose $N$ big enough, so that
\begin{equation}\label{choiceNN}
  \nu_-(S_j)=\kappa=\nu_-(S_{n_N}),\quad
\nu_-(S_j^{(1)})=k=\nu_-(S_{n_N}^{(1)})\quad\mbox{for all}\quad j\ge n_N.
\end{equation}
 Then the induced sequence ${\mathbf {s}^{(N)}}$  belongs to the class $\cH_0^0$ and by Lemma~\ref{lem:Pade} the problem of convergence of diagonal and  subdiagonal Pad\'{e} approximants is reduced to the corresponding problem for  diagonal and  subdiagonal Pad\'{e} approximants for the series $-\sum\limits_{j=0}^\infty {s_j^{(N)}}{z^{-(j+1)}}$ corresponding to the classical Stieltjes moment problem $\mathcal{M}_{0}^{0}({\mathbf {s}^{(N)}})$. The stated results for  classical Stieltjes moment problems were proved in~\cite[Theorems~5.30, 5.31]{Sim98}.
\end{proof}
\section{Example. Laguerre polynomials}\label{sec:6}
 { The monic} Laguerre polynomials $\widetilde{L}_{n}(z,\alpha):=(-1)^nz^{-\alpha}e^z(z^{\alpha+n}e^{-z})^{(n)}$  are solutions of the three--term difference equation (see \cite{seg})
\begin{equation}\label{ex.l.1}\begin{split}
  y_{n+1}(z)+(2n+\alpha+1-z)y_{n}(z)+(n+\alpha)ny_{n-1}(z),\quad n\in\mathbb{N}.
\end{split}\end{equation}
subject to the initial conditions $    \widetilde{L}_{-1}(z,\alpha)\equiv0$, $\widetilde{L}_{0}(z,\alpha)\equiv1$.

If $\alpha>-1$, then the polynomials
$\{\widetilde{L}_{n}(z,\alpha)\}_{n=0}^{\infty}$ are orthogonal  in the Hilbert space $L_2(\mathbb{R}_+,w_\alpha)$, with the weight function  $w_\alpha(z)=z^\alpha e^{-z}$.

Here we consider the case when $\alpha<-1$ and $\alpha$ is not a negative integer. The case when  $\alpha$ is a negative integer was treated in~\cite{LSh92}.
If $-k-1<\alpha<-k$, $k\in\dN$, then $\widetilde{L}_{n}(z,\alpha)$  are orthogonal polynomials  with respect to the indefinite inner product  (see \cite{D98}, \cite{MK78})
\begin{equation}\label{introduction1}\left\langle f,g\right\rangle_{\alpha}=\int\limits_{0}^{\infty}x^{\alpha}\left(e^{-x}f\overline{g}-\sum_{j=0}^{k-1}
(e^{-x}f\overline{g})^{(j)}(0)\frac{x^{j}}{j!}\right)dx.\end{equation}

Polynomials $\widetilde{Q}_{n}(x,\alpha)$ of the second kind are defined as solutions of~\eqref{ex.l.1} subject to the initial conditions $  \widetilde{Q}_{-1}(z,\alpha)\equiv-1$ and  $\widetilde{Q}_{0}(z,\alpha)\equiv0$.

The  $P$ -- fraction corresponding to the  system \eqref{ex.l.1} has the form~\eqref{eq:Pfrac}  with the atoms $(a_n,b_n)$ given by
\begin{equation}\label{ex.l.3''}
    b_0=\Gamma(1+\alpha),\quad b_n=n(n+\alpha)\quad\mbox{and}\quad a_{n-1}(z)= z-2n-\alpha+1,\quad n\in\mathbb{N}.
\end{equation}

The moments $s_n$  for all $n\in\dN$ are defined by
\begin{equation}\label{ex.lem.1.1.2.1}
    s_n=\mathfrak{S}(z^n)=\Gamma(n+\alpha+1).
\end{equation}
By the Buslaev formula, see~{\cite[formula (13)]{Bus10}} the determinants of $S_n$  and $S_n^+$ take the form
\[
D_n=b_0^nb_1^{n-1}\dots b_{n-1}=\prod_{j=1}^n(j-2)!\Gamma(\alpha+j), \quad
D_n^+=\prod_{j=1}^n(j-2)!\Gamma(\alpha+j+1),\quad n\in\dN.
\]
Therefore, the sequence $\textbf{s}=\{s_n\}_{n=0}^\infty$ is regular and the set of its normal indices coincides with $\dN$ and 
\[
\textbf{s}\in\mathcal{H}_{k}^{k-1}, \,\mbox{ if }\,  -2k<\alpha<-2k+1,\quad
\textbf{s}\in\mathcal{H}_{k}^{k}, \,\mbox{ if }\,  -2k-1<\alpha<-2k;
\]

Next, it follows from \eqref{int5x} that the parameters of the generalized S-fraction take the form

  \[
 l_n:=\frac{D_n^2}{D_n^+D_{n-1}^+}=\frac{(n-1)!\Gamma^2(1+\alpha)}{\Gamma(1+\alpha+n)},\, m_n:=\frac{(D_{n-1}^+)^2}{D_nD_{n-1}}=\frac{\Gamma(\alpha+n)}{(n-1)!\Gamma^2(1+\alpha)}\quad (n\in\mathbb{N}).
  \]

The monic Laguerre polynomials $\widetilde{L}_{n}(z,\alpha)$ can be calculated by (see \cite{seg},\cite{suetin})
\begin{equation}\label{ex.l.3}
    \widetilde{L}_{n}(z,\alpha)
    =\sum_{k=0}^{n}\binom{n}{k}\frac{\Gamma(n+\alpha+1)}{\Gamma(k+\alpha+1)}(-1)^{n+k}z^k.
\end{equation}
Then using the formula
\[
{Q}_{n}(z,\alpha)=
\mathfrak{S}_t\left(\frac{\widetilde{L}_{n}(z,\alpha)-\widetilde{L}_{n}(t,\alpha)}{z-t}\right)\]
one can find the Lanczos polynomials of the second kind:
\begin{equation}\label{ex.l.5}\begin{split}
{Q}_{n}(z,\alpha)&=
\mathfrak{S}_t\left(\sum_{k=0}^{n}\binom{n}{k}\frac{\Gamma(n+\alpha+1)}{\Gamma(k+\alpha+1)}(-1)^{n+k}\frac{z^k-t^k}{z-t}\right)=\\
&=\sum_{k=1}^{n}z^{k-1}\Gamma(\alpha+n+1)\sum_{j=0}^{n-k}(-1)^{n+k+j}\binom{n}{k+j}\frac{\Gamma(\alpha+j+1)}{\Gamma(\alpha+k+j+1)}.
\end{split}\end{equation}
Due to \eqref{2p.new8.r7} the Stieltjes polynomials of the first and second kind are calculated by
  \[\begin{split}
  &P^+_{2n}(z,\alpha)=\Gamma(1+\alpha)\sum_{i=0}^n\binom{n}{i}\frac{(-1)^i z^i}{\Gamma(1+\alpha+i)},\\&
  P_{2n-1}^+(z,\alpha)=-\frac{1}{(n-1)!}\prod_{j=1}^n(j+n)\left(\sum_{i=0}^{n-1}\frac{(-1)^{i-1}(n-1)!i\,z^i}
  {i!(n-i)!\Gamma(1+\alpha+i)}+\frac{(-1)^{n-1}z^n}{\Gamma(1+\alpha+n)}\right),\\&
  Q^+_{2n}(z,\alpha)=\sum_{i=1}^{n}z^{i-1}\Gamma(1+\alpha)\sum_{j=0}^{n-i}(-1)^{i+j+1}\binom{n}{i+j}\frac{\Gamma(1+\alpha+n)}{\Gamma(1+\alpha+i+j+1)},\\&
  Q^+_{2n-1}(z,\alpha)=\Gamma(1+\alpha+n)\sum_{i=1}^{n}z^{i-1}
        \sum_{j=0}^{n-i}\frac{(-1)^{i+j-1}\Gamma(1+\alpha+j)(n-1)!}{i!(n-j-i)!\Gamma(1+\alpha+i+j)}.
  \end{split}
  \]
The set of solutions of the truncated moment problem $MP_\kappa^k(\mathbf{s},2n-1)$ is described by the formula~\eqref{eq:LFT_W}.

Notice that the full moment problem is determinate and  in the case
$\alpha>-1$ its unique solution of $MP_0^0(\mathbf{s})$  is given by
\[
f(z)=\int_0^\infty \frac{x^{\alpha}e^{-x}}{x-z}dx
=z^{\frac{\alpha-1}{2}}e^{-\frac{z}{2}}W_{\frac{-\alpha-1}{2},\frac{\alpha}{2}}(-z),
\]
where  $W_{\alpha,\beta}$ is the Whitteker function, see~\cite[9.222]{GR63}.

If $-2k-1<\alpha<-2k$, $k\in\dN$, then the  solution  $f(z)$ of $MP_k^k(\mathbf{s})$
can be found by the formula
\begin{equation}\label{eq:Weyl_f}
  f(z)=\mathfrak{S}_x\left(\frac{e^{-x}x^\alpha}{x-z}\right)=\int_0^\infty x^{\alpha}\left(h(x,z)-\sum_{j=0}^{2k-1}\frac{h_x^{(j)}(0,z)}{j!}x^j\right)dx,
\end{equation}
where $h(x,z)=\frac{e^{-x}}{x-z}$. Similar formula holds also  for the case $-2k<\alpha<-2k+1$. In particular, if $-2<\alpha<-1$, then
\[
f(z)=I(z)+I'(z)\in MP_1^0(\mathbf{s}),\quad\mbox{ where }
\,I(z)=\frac{z^{\frac{\alpha}{2}}e^{-\frac{z}{2}}}{\alpha+1}W_{\frac{-\alpha-2}{2},\frac{\alpha+1}{2}}(-z).
\]
By Propositions~\ref{prop:Diag_Pade}, \ref{prop:sub_Diag_Pade} the Pad\'{e} approximants of $f$ take the form (see~\eqref{eq:Pade_con2} and~\eqref{eq:Pade_con3}):
\[
f^{[n/n]}(z)=\frac{Q_{2n}^+(z,\alpha)}{P_{2n}^+(z,\alpha)},\quad
f^{[n/n-1]}(z)=\frac{Q_{2n-1}^+(z,\alpha)}{P_{2n-1}^+(z,\alpha)},\quad n\in\dN.
\]

\end{document}